\numberwithin{equation}{section}
\newcommand{\Sec}{\mathrm{Sec}}
\newcommand{\diam}{\mathrm{diam}}
\newcommand{\vol}{\mathrm{vol}}
\title{\Large \bf \boldmath\ \\ Geometric analysis aspects of infinite semiplanar graphs with nonnegative curvature} 
\author{\large  Bobo Hua$^\ast$\ \ \ \  J\"{u}rgen Jost$^{\dag}$ \ \ \ \ Shiping Liu$^{\ddag}$} 
\date{}
\begin{document}
\par
\maketitle

\renewcommand{\thefootnote}{\fnsymbol{footnote}}

\footnotetext{\hspace*{-5mm} \begin{tabular}{@{}r@{}p{13.4cm}@{}}
$^\ast$ & Max Planck Institute for Mathematics in the Sciences, 04103 Leipzig,  Germany.\\
&{Email: bobohua@mis.mpg.de} \\
$^{\dag}$ & supported by ERC Advanced Grant FP7-267087.\\
& Max Planck Institute for Mathematics in the Sciences, 04103 Leipzig,
 Germany, and \\
&Department of Mathematics and Computer Science, University of
Leipzig, 04109 Leipzig, Germany\\

&{Email: jost@mis.mpg.de} \\
$^{\ddag}$ & Max Planck Institute for Mathematics in the Sciences,
04103 Leipzig, Germany; Academy\\ & of Mathematics and Systems Science, Chinese Academy of Sciences, Beijing 100190, China.\\
&{Email: shiping@mis.mpg.de}\\
&Mathematics Subject Classification 2010: 31C05, 05C10.
\end{tabular}}

\renewcommand{\thefootnote}{\arabic{footnote}}

\newtheorem{theorem}{Theorem}[section]
\newtheorem{con}[theorem]{Conjecture}
\newtheorem{lemma}[theorem]{Lemma}
\newtheorem{corollary}[theorem]{Corollary}
\newtheorem{definition}[theorem]{Definition}
\newtheorem{conv}[theorem]{Assumption}
\newtheorem{remark}[theorem]{Remark}

\bigskip

\begin{abstract}  We apply Alexandrov geometry
  methods to study geometric analysis aspects of  infinite semiplanar
  graphs with nonnegative combinatorial curvature. We obtain the metric classification of these
  graphs and construct the graphs embedded in the projective plane
  minus one point. Moreover, we show the volume doubling property and
  the Poincar\'e inequality on such graphs. The quadratic volume
  growth of these graphs implies the parabolicity. Finally, we
  prove the polynomial growth harmonic function theorem analogous to
  the case of Riemannian manifolds.
\medskip
\end{abstract}
\section{Introduction}
In this paper, we study systematically (infinite) semiplanar graphs
$G$ of nonnegative curvature. This curvature condition can either be
formulated purely combinatorically, as in the approach of
\cite{St,G,I}, or as an Alexandrov curvature condition on the
polygonal surface $S(G)$ obtained by assigning length one to every
edge and filling in faces. The fact that these two curvature
conditions -- nonnegative combinatorial curvature of $G$ and
nonnegative Alexandrov curvature of $S(G)$ -- are equivalent will be
systematically exploited in the present paper. First of all, we can
then classify such graphs. Curiously, as soon as the maximal degree
of a face is at least 43, the graph necessarily has a rather special
structure. This will simplify our reasoning considerably. Secondly,
as Alexandrov geometry is a natural generalization of Riemannian
geometry, we can systematically carry over the geometric function
theory of nonnegatively curved Riemannian manifolds to the setting
of nonnegatively curved semiplanar graphs. Starting with two basic
inequalities, the volume doubling property and the Poincar\'e
inequality, which hold for such spaces, we obtain the Harnack
inequality for harmonic functions by Moser's iteration scheme. Here,
for defining (sub-, super-)harmonic functions, we use the discrete
Laplace operator of $G$. Our main results then say that a
nonnegatively curved semiplanar graph is parabolic in the sense that
it does not support any nontrivial positive superharmonic function
(equivalently, Brownian motion is recurrent), and that the dimension
of the space of harmonic functions of polynomial growth with
exponent at most $d$ is bounded for any $d$. This is an extension of
the solution by Colding-Minicozzi \cite{CM2} of a conjecture of Yau
\cite{Y3} in Riemannian geometry.

Let us now describe the results in more precise technical terms. The
combinatorial curvature for planar graphs was introduced in Stone
\cite{St,St1}, Gromov \cite{G} and Ishida \cite{I}. In \cite{H},
Higuchi conjectured, as a discrete analog of Myers' theorem in
Riemannian geometry, that any planar graph with positive curvature
everywhere is a finite graph. DeVos and Mohar \cite{DM} solved the
conjecture by proving the Gauss-Bonnet formula for infinite planar
graphs. The combinatorial curvature was studied by many authors
\cite{Woe,CC,Ch,SY,RBK,BP1,BP2,K1,K2,K3}.

In this paper, we are interested in infinite graphs. Let $G$ be an
infinite graph embedded in a 2-manifold $S(G)$ such that each face
is homeomorphic to a closed disk with finite edges as the boundary.
This includes the case of a planar graph, and  we call such a
$G=(V,E,F)$ with its sets of vertices $V$, edges $E$, and faces $F$,
a semiplanar graph. For each vertex $x\in V$, the combinatorial
curvature at $x$ is defined as
$$\Phi(x)=1-\frac{d_x}{2}+\sum_{\sigma\ni x}\frac{1}{\deg(\sigma)},$$
where $d_x$ is the degree of the vertex $x$, $\deg(\sigma)$ is the
degree of the face $\sigma$, and the sum is taken over all faces
incident to $x$ (i.e. ${x\in \sigma}$). The idea of this definition
is to measure the difference of $2\pi$ and the total angle
$\Sigma_x$ at the vertex $x$ on the polygonal surface $S(G)$
equipped with a metric structure  obtained from replacing each face
of G with a regular polygon of side lengths one and gluing them
along the common edges. That is,
$$2\pi \Phi(x)=2\pi-\Sigma_x.$$ Let $\chi(S(G))$ denote the Euler characteristic of the surface $S(G)$. The Gauss-Bonnet formula of $G$ in \cite{DM} reads as
$$\sum_{x\in G}\Phi(x)\leq\chi(S(G)),$$
whenever $\Sigma_{x\in G:\Phi(x)<0}\Phi(x)$ converges. Furthermore,
Chen and Chen \cite{CC} proved that if the absolute total curvature
$\Sigma_{x\in G}|\Phi(x)|$ is finite, then G has only finitely many
vertices with nonvanishing curvature.  Then Chen \cite{Ch} obtained
the topological classification of infinite semiplanar graphs with
nonnegative curvature: $\mathds{R}^2$, the cylinder without
boundary, and the projective plane minus one point. In addition, at
the end of the paper \cite{Ch}, he proposed a question on the
construction of semiplanar graphs with nonnegative curvature
embedded in the projective plane minus one point.

We note that the definition of the combinatorial curvature is
equivalent to the generalized sectional (Gaussian) curvature of the
surface S(G). The semiplanar graph G has nonnegative combinatorial
curvature if and only if the corresponding regular polygonal surface
S(G) is an Alexandrov space with nonnegative sectional curvature,
i.e. $\Sec\ S(G)\geq 0$ (or $\Sec (G)\geq0$ for short).

Here, we are referring to another notion of curvature for such
polygonal spaces, or more precisely, of curvature bounds. This paper
will derive its insights from comparing these curvature notions. A
metric space $(X,d)$ is called an Alexandrov space if it is a
geodesic space (i.e. each pair of points in $X$ can be joined by a
shortest path called a geodesic) and  locally satisfies the
Toponogov triangle comparison. For the basic facts of Alexandrov
spaces, readers are referred to \cite{BGP,BBI}. In this paper, we
shall apply the Alexandrov geometry to study the geometric and
analytic properties of semiplanar graphs with nonnegative curvature.

Alexandrov geometry can be seen as a natural generalization of
Riemannian geometry, and many fundamental results of Riemannian
geometry extend to the more general Alexandrov setting. Firstly, the
well known Cheeger-Gromoll splitting theorem for Riemannian
manifolds with nonnegative Ricci curvature was generalized to
Alexandrov spaces (see \cite{CG,BBI,Mil,M,ZZ}); the result is that
if the $n$-dimensional Alexandrov space $(X,d)$ with nonnegative
curvature contains an infinite geodesic $\gamma$, i.e. $\gamma:
(-\infty, \infty)\rightarrow X$, then $X$ isometrically splits as
$Y\times\mathds{R}$, where $Y$ is an $(n-1)$-dimensional Alexandrov
space with nonnegative curvature. In the present paper, we shall
prove that if the semiplanar graph $G$ with nonnegative curvature
has at least two ends (geometric ends at infinity), then $S(G)$ is
isometric to the cylinder; this is interesting since we do not use
the Gauss-Bonnet formula here. Moreover, we give the metric
classification of $S(G)$ for semiplanar graphs $G$ with nonnegative
curvature. An orientable $S(G)$ is isometric to a plane, or a
cylinder without boundary if it has vanishing curvature everywhere,
and isometric to a cap which is homeomorphic but not isometric to
the plane if it has at least one vertex with positive curvature. A
nonorientable $S(G)$ is isometric to the metric space obtained by
gluing in some way the boundary of $[0,a]\times \mathds{R}$ with
vanishing curvature everywhere (see Lemma \ref{PJC}). By this lemma,
we answer the question of Chen \cite{Ch}.

Secondly, we prove that $G$ inherits some geometric  estimates  from
those of $S(G)$. Let $d^G$ (resp. $d$) denote the intrinsic metric
on the graph $G$ (resp. polygonal surface $S(G)$). It will be proved
that these two metrics are bi-Lipschitz equivalent on $G$, i.e. for
any $x,y \in G$, $$Cd^G(x,y)\leq d(x,y)\leq d^G(x,y).$$  We denote
by $B_R(p)=\{x\in G: d^G(p,x)\leq R\}$ the closed geodesic ball in
$G$ and by $B_R^{S(G)}(p)=\{x\in S(G): d(p,x)\leq R\}$ the closed
geodesic ball in $S(G)$ respectively. The volume of $B_R(p)$ is
defined as $|B_R(p)|=\sum_{x\in B_R(p)}d_x.$ The Bishop-Gromov
volume comparison holds on the $n$-dimensional Alexandrov space
$(X,d)$ with nonnegative curvature (see \cite{BBI}). For any $p\in
X, 0<r<R$, we have
\begin{equation}\label{RVC1}\frac{\mathcal{H}^n(B_R^X(p))}{\mathcal{H}^n(B_r^X(p))}\leq\left(\frac{R}{r}\right)^n,\end{equation}
 \begin{equation}\label{VD1}\mathcal{H}^n(B_{2R}^X(p))\leq2^n \mathcal{H}^n(B_R^X(p)),\end{equation}
 \begin{equation}\label{VG1}\mathcal{H}^n(B_R^X(p))\leq C(n)R^n,\end{equation} where $B_R^X(p)$ is the closed geodesic ball in $X$ and $\mathcal{H}^n$ is the $n$-dimensional Hausdorff measure.  We call (\ref{RVC1}) the relative volume comparison and (\ref{VD1}) the volume doubling property. Note that $S(G)$ is a 2-dimensional Alexandrov space with nonnegative curvature if $G$ is a semiplanar graph with nonnegative combinatorial curvature. Let $D_G$ denote the maximal degree of the faces in $G$, i.e. $D_G=\max_{\sigma\in F}\deg(\sigma)$ which is finite by \cite{CC}. In this paper, for simplicity we also denote $D:=D_G$ when it does not make any confusion. The relative volume growth property for the graph $G$ is obtained in the following theorem.

\begin{theorem} Let G be a semiplanar graph with $\Sec (G)\geq 0$. Then for any $p\in G, 0<r<R,$ we have
 \begin{equation}\label{RVCG1}\frac{|B_R(p)|}{|B_r(p)|}\leq C(D)\left(\frac{R}{r}\right)^2,\end{equation}
 \begin{equation}\label{VDG1}|B_{2R}(p)|\leq C(D) |B_R(p)|,\end{equation}
 \begin{equation}\label{QVG1}|B_R(p)|\leq C(D)R^2,\ \ \ \ (R\geq1)\end{equation} where $C(D)$ is a constant only depending on $D$ which is the maximal facial degree of $G$.
\end{theorem}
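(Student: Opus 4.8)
The plan is to transfer the problem to the Alexandrov surface $S(G)$, where the Bishop--Gromov estimates \eqref{RVC1}--\eqref{VG1} are available with $n=2$ because $\Sec S(G)\ge 0$, and to compare the combinatorial volume $|B_R(p)|=\sum_{x\in B_R(p)}d_x$ with the two-dimensional Hausdorff measure $\mathcal{H}^2$ of geodesic balls in $S(G)$. Two elementary facts drive the comparison. Each face of $S(G)$ is a flat regular polygon of side length one with $\deg(\sigma)\ge 3$ sides, so it has area $\mathcal{H}^2(\sigma)=\tfrac{\deg\sigma}{4}\cot\tfrac{\pi}{\deg\sigma}\in[\tfrac{\sqrt3}{4},A(D)]$, where $A(D):=\tfrac{D}{4}\cot\tfrac{\pi}{D}$; any two of its points lie within $d$-distance $\delta(D):=(\sin\tfrac{\pi}{D})^{-1}$, and $\mathcal{H}^2$ of a Borel subset of $S(G)$ is the sum of the flat areas of its intersections with the faces. (Nonnegative curvature also yields $d_x\le 6$, so $|B_R(p)|$ is comparable to the number of vertices it contains, but only the face count will be used.)

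The core step is a two-sided sandwich of $|B_R(p)|$ between multiples of $\mathcal{H}^2$ of surface balls of comparable radius, starting from the counting identity
\[
 |B_R(p)|=\sum_{x\in B_R(p)}d_x=\sum_{\sigma\in F}\#\{x\in B_R(p):x\in\sigma\}.
\]
For the upper bound, each face contributes at most $\deg\sigma\le D$ and only when it meets $B_R(p)$; since $d\le d^G$, such a face lies in $B_{R+\delta(D)}^{S(G)}(p)$, these faces have disjoint interiors and area $\ge\tfrac{\sqrt3}{4}$, whence $|B_R(p)|\le \tfrac{4D}{\sqrt3}\,\mathcal{H}^2\!\big(B_{R+\delta(D)}^{S(G)}(p)\big)$. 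For the lower bound, restrict the sum to faces all of whose vertices lie in $B_R(p)$: by $Cd^G\le d$ every face contained in $B_{CR}^{S(G)}(p)$ is of this type, these faces cover $B_{CR-\delta(D)}^{S(G)}(p)$ and each has area $\le A(D)$, so $|B_R(p)|\ge 3\,\#\{\text{such }\sigma\}\ge \tfrac{3}{A(D)}\,\mathcal{H}^2\!\big(B_{CR-\delta(D)}^{S(G)}(p)\big)$.

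Granting the sandwich, the three inequalities follow by feeding it into the estimates on $S(G)$. The quadratic growth \eqref{QVG1} is immediate from the upper bound and \eqref{VG1}: for $R\ge 1$ one has $R+\delta(D)\le(1+\delta(D))R$, so $|B_R(p)|\le \tfrac{4D}{\sqrt3}\,C(2)\,(1+\delta(D))^2R^2=C(D)R^2$. For \eqref{RVCG1} with $R>r$ and $r\ge 2\delta(D)/C$, combine the upper bound at radius $R$ with the lower bound at radius $r$ and apply \eqref{RVC1} to $B_{R+\delta(D)}^{S(G)}(p)$ and $B_{Cr-\delta(D)}^{S(G)}(p)$; since $Cr-\delta(D)\ge \tfrac{C}{2}r$ and $R+\delta(D)\le\tfrac32R$, the ratio of radii is $\le \tfrac{3}{C}\tfrac{R}{r}$ and the estimate closes with a constant $C(D)$. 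The residual range $0<r<2\delta(D)/C$ is handled by the elementary bound $|B_r(p)|\ge d_p\ge 1$ together with \eqref{QVG1}: for $R\ge1$ this gives $|B_R(p)|\le C(D)R^2\le C(D)(2\delta(D)/C)^2(R/r)^2$, while for $R<1$ one has $B_r(p)=B_R(p)=\{p\}$ and the ratio is $1$. Finally \eqref{VDG1} is the special case $R\mapsto 2R,\ r\mapsto R$ of \eqref{RVCG1}.

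The main obstacle is exactly the control of the \emph{boundary faces}, those straddling the metric sphere $\{d^G(p,\cdot)=R\}$, and the bookkeeping of the two radius distortions they introduce into the sandwich: an additive $\pm\delta(D)$, reflecting that a face may have diameter as large as $\sim D$, and a multiplicative $C$, coming from the bi-Lipschitz equivalence of $d^G$ and $d$. Both distortions alter the radius only by an amount or factor controlled by $D$, so after applying \eqref{RVC1} they are absorbed into the single constant $C(D)$; the degeneration of the inner ball $B_{Cr-\delta(D)}^{S(G)}(p)$ at small radii is the only place requiring a separate, elementary argument.
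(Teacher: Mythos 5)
Your proposal is correct and follows essentially the same route as the paper: sandwich $|B_R(p)|$ between constants times $\mathcal{H}^2$ of surface balls with radii distorted additively by the face diameter and multiplicatively by the bi-Lipschitz constant of Lemma \ref{LEM1}, apply the Bishop--Gromov estimates on the Alexandrov surface $S(G)$, and treat small $r$ separately via the quadratic growth bound and $|B_r(p)|\geq d_p$. The only (immaterial) differences are bookkeeping ones, e.g.\ you count faces whose vertices all lie in $B_R(p)$ where the paper counts faces attached to an inner surface ball, and you use the circumscribed diameter $(\sin\tfrac{\pi}{D})^{-1}$ where the paper uses the cruder bound $\diam\sigma\leq D$.
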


Thirdly, we show that the Poincar\'e inequality holds on the
semiplanar graph $G$ with nonnegative curvature. The Poincar\'e
inequality has been proved on Alexandrov spaces in \cite{KMS,Hu1},
and also on graphs ($\epsilon$-nets) embedded into Riemannian
manifolds with bounded geometry in \cite{CS}. Let $u$ be a local
$W^{1,2}$ function on an $n$-dimensional Alexandrov space $(X,d)$
with $\Sec\ X\geq 0$, then
\begin{equation}\label{PI1}\int_{B_R^X(p)}|u-u_{B_R}|^2\leq C(n)R^2\int_{B_R^X(p)}|\triangledown u|^2,\end{equation} where $u_{B_R}=\frac{1}{\mathcal{H}^n(B_R^X(p))}\int_{B_R^X(p)}u.$ For any function $f:G\rightarrow \mathds{R}$, we extend it to each edge of $G$ by  linear interpolation and then to each face nicely with controlled energy (see Lemma \ref{EX1}). So we get a local $W^{1,2}$ function on $S(G)$ which satisfies the Poincar\'e inequality (\ref{PI1}), and then it implies the Poincar\'e inequality on the graph $G$.

\begin{theorem} Let $G$ be a semiplanar graph with $\Sec (G)\geq 0$. Then there exist two constants C(D) and C such that for any $p\in G, R>0, f:B_{CR}(p)\rightarrow \mathds{R}$, we have
\begin{equation}\label{PIG1}\sum_{x\in B_R(p)}(f(x)-f_{B_R})^2d_x\leq C(D)R^2\sum_{x,y\in B_{CR}(p);x\sim y}(f(x)-f(y))^2,\end{equation}
where $f_{B_R}=\frac{1}{|B_R(p)|}\sum_{x\in B_R(p)}f(x)d_x$, and
$x\sim y$ means x and y are neighbors.
\end{theorem}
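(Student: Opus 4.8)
The plan is to transfer the continuous Poincar\'e inequality (\ref{PI1}) on the Alexandrov space $S(G)$ down to the graph, using the energy-controlled extension furnished by Lemma \ref{EX1}. Given $f$ defined on a sufficiently large graph ball, let $\tilde f$ be its extension to $S(G)$: interpolate linearly along each edge and then fill in each face by the controlled-energy extension of Lemma \ref{EX1}, so that $\tilde f\in W^{1,2}_{loc}(S(G))$, $\tilde f(x)=f(x)$ at every vertex $x$, and on each face $\sigma$ one has $\int_\sigma|\nabla\tilde f|^2\le C(D)\sum_{x\sim y,\ x,y\in\sigma}(f(x)-f(y))^2$. Since $S(G)$ is a $2$-dimensional Alexandrov space with $\Sec(G)\ge0$, applying (\ref{PI1}) with $n=2$ to $\tilde f$ on a surface ball $B_{R'}^{S(G)}(p)$ gives
$$\int_{B_{R'}^{S(G)}(p)}|\tilde f-\tilde f_{B_{R'}}|^2\le C(2)(R')^2\int_{B_{R'}^{S(G)}(p)}|\nabla\tilde f|^2,$$
where $\tilde f_{B_{R'}}$ is the continuous mean. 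It remains to compare each side with its discrete counterpart and to match up the balls.

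For the Dirichlet term on the right I would sum the facewise bound of Lemma \ref{EX1} over all faces meeting $B_{R'}^{S(G)}(p)$. Each face is a regular polygon of side one and facial degree at most $D$, hence has diameter bounded by a constant depending on $D$; so if a vertex $x$ satisfies $d^G(p,x)\le R$, then $d(p,x)\le d^G(p,x)\le R$ and every face incident to $x$, together with its edges, lies in $B_{R+C(D)}^{S(G)}(p)$. Choosing $R'\asymp R$ (the additive constant is absorbed once $R\ge1$) and then using the lower bi-Lipschitz bound $d^G\le C^{-1}d$, every edge counted in $B_{R'}^{S(G)}(p)$ has both endpoints in an enlarged graph ball $B_{CR}(p)$. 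This yields $\int_{B_{R'}^{S(G)}(p)}|\nabla\tilde f|^2\le C(D)\sum_{x,y\in B_{CR}(p);\,x\sim y}(f(x)-f(y))^2$, which is the right-hand side of (\ref{PIG1}) up to the factor $R^2$.

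For the $L^2$ term on the left I must bound the discrete norm by the continuous one. First, because $f_{B_R}$ minimizes $c\mapsto\sum_{x\in B_R}(f(x)-c)^2d_x$, I may replace it by the continuous mean at no cost, so it suffices to bound $\sum_{x\in B_R(p)}(f(x)-c)^2d_x$ with $c=\tilde f_{B_{R'}}$. I then localize near each vertex: the faces incident to $x$ subtend total angle $\Sigma_x\le2\pi$ (this is exactly $\Sec(G)\ge0$), and within a neighborhood of fixed radius they enclose an area comparable, with constants depending on $D$, to $d_x$; since $\tilde f(x)=f(x)$, the discrepancy $|\tilde f-f(x)|$ on this neighborhood is controlled by the local Dirichlet energy. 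Expanding $\int_{\mathrm{near}\,x}|\tilde f-c|^2$ shows $(f(x)-c)^2d_x\le C(D)\int_{\mathrm{near}\,x}|\tilde f-c|^2+C\int_{\mathrm{near}\,x}|\nabla\tilde f|^2$; summing over $x\in B_R(p)$, with the (essentially disjoint) vertex neighborhoods having bounded overlap, gives
$$\sum_{x\in B_R(p)}(f(x)-c)^2d_x\le C(D)\int_{B_{R'}^{S(G)}(p)}|\tilde f-c|^2+C\int_{B_{R'}^{S(G)}(p)}|\nabla\tilde f|^2.$$
The first term is handled by the continuous inequality above, and the second is lower order and is absorbed into the right-hand side since $R\ge1$; chaining the estimates proves (\ref{PIG1}).

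The main obstacle is precisely this last, left-hand comparison. Lemma \ref{EX1} supplies only the \emph{upper} bound on the continuous energy needed for the right-hand side, whereas controlling the discrete $L^2$ norm requires a \emph{lower} bound on $\int|\tilde f-c|^2$ in terms of the vertex values weighted by $d_x$. The delicate points are to verify that the mass of $\tilde f$ genuinely concentrates near each vertex with weight proportional to $d_x$ (which rests on $\Sigma_x\le2\pi$ and the bounded facial degree to compare $d_x$ with the local area), and to bound the discrepancy between $\tilde f$ and $f(x)$ by a Dirichlet-energy error that, being multiplied only by the fixed neighborhood scale rather than $R^2$, is negligible against the right-hand side. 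Keeping all the ball inclusions mutually consistent through the bi-Lipschitz equivalence and the bounded face diameters, so that a single enlargement factor $C$ works uniformly, is the remaining bookkeeping that must be done carefully.
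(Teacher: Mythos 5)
Your overall strategy is exactly the paper's: extend $f$ linearly along edges, fill in faces with a controlled-energy extension, apply the continuous Poincar\'e inequality \eqref{PI1} on $S(G)$, and transfer both sides back to the graph. Your treatment of the Dirichlet term is correct and matches the paper. But the step you yourself flag as ``the main obstacle'' --- bounding $\sum_{x\in B_R}(f(x)-c)^2d_x$ by $\int|\tilde f-c|^2$ plus lower-order energy --- is left genuinely open, and your proposed route through vertex neighborhoods is the wrong localization. In two dimensions a $W^{1,2}$ function has no pointwise control by its local Dirichlet energy, so an estimate of the form $(f(x)-c)^2d_x\le C(D)\int_{\mathrm{near}\,x}|\tilde f-c|^2+C\int_{\mathrm{near}\,x}|\nabla\tilde f|^2$ cannot be had for a general extension; it must exploit the specific structure of $\tilde f$. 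The paper closes this gap not at vertices but at face boundaries: the second inequality of Lemma \ref{EX1}, $\int_{\partial\triangle_n}f_1^2\le C(D)\bigl(\int_{\triangle_n}f_2^2+\int_{\partial\triangle_n}(f_1)_{T_n}^2\bigr)$, is a trace inequality for the harmonic extension (proved by Fourier expansion on the disk in Lemma \ref{B1ECL}), and the elementary edge computation \eqref{ECTG1} converts $\int_e f_1^2$ into $f(u)^2+f(v)^2$ with two-sided constants. Chaining $\sum_{x\in B_R}(f(x)-c)^2d_x\le 6\int_{B_{R+1}^{G_1}}(f_1-c)^2\le\sum_\sigma\int_{\partial\sigma}(f_1-c)^2$ and then applying the trace inequality facewise is what makes the left-hand comparison rigorous; you would need to either prove this trace inequality or an equivalent substitute, and your sketch does not supply one.

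A second omission: your ball bookkeeping forces an enlargement from $R$ to roughly $R+C(D)$, which is only a bounded multiple of $R$ once $R\gtrsim D$. For $1\le R\le r_0(D)$ the multiplicative enlargement factor blows up with $D$, so the constant $C$ in $B_{CR}(p)$ would cease to be absolute. The paper handles this regime by an entirely separate argument: on the subgraph induced by $B_R$ it invokes the spectral gap bound $\lambda_1(G^R)\ge(\diam G^R\cdot\vol G^R)^{-1}$ together with the quadratic volume bound \eqref{VGG1}, which yields the Poincar\'e inequality on $B_R$ itself with no enlargement. Without this (or some replacement), your proof establishes the inequality only for $R$ large relative to $D$, or with an enlargement constant depending on $D$.
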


Finally, we shall study some global properties of harmonic functions
on the semiplanar graph $G$ with nonnegative curvature. Let
$f:G\rightarrow \mathds{R}$ be a function on the graph $G$. The
Laplace operator $L$ is defined as (see \cite{G3,DK,Chu})
$$Lf(x)=\frac{1}{d_x}\sum_{y\sim x}(f(y)-f(x)).$$ A function $f$ is
called harmonic (subharmonic, superharmonic) if $Lf(x)=0\ (\geq
0,\leq 0),$ for each $x\in G$.

A manifold or a graph is called parabolic if it does not admit any
nontrivial positive superharmonic function. The question when a
manifold is parabolic has been studied extensively in the
literature; in fact, parabolicity is equivalent to recurrency for
Brownnian motion (see \cite{G2,HK,RSV}). Noticing that the
semiplanar graph $G$ with nonnegative curvature has the quadratic
volume growth (\ref{QVG1}), we obtain the following theorem in a
standard manner (see \cite{Woe1,HK}).

\begin{theorem} Any semiplanar graph $G$ with $\Sec (G)\geq 0$ is parabolic.
\end{theorem}

Since Yau \cite{Y1} proved the Liouville theorem for positive
harmonic functions on complete Riemannian manifolds with nonnegative
Ricci curvature, the study of harmonic functions on manifolds has
been one of the central fields of geometric analysis. Yau
conjectured in \cite{Y2,Y3} that the linear space of polynomial
growth harmonic functions with a fixed growth rate on a Riemannian
manifold with nonnegative Ricci curvature is of finite dimension.
Colding and Minicozzi \cite{CM2} gave an affirmative answer to the
conjecture by the volume doubling property and the Poincar\'e
inequality. An alternative method by the mean value inequality was
introduced by Colding and Minicozzi \cite{CM4} (see also \cite{L1}).
In this paper, we call this result the polynomial growth harmonic
function theorem. Delmotte \cite{D1} proved it in the graph setting
by assuming the volume doubling property and the Poincar\'e
inequality. Kleiner \cite{Kl} generalized it to Cayley graphs of
groups of polynomial growth, by which he gave a new proof of
Gromov's theorem in group theory. The first author \cite{Hu2}
generalized it to Alexandrov spaces and gave the optimal dimension
estimate analogous to the Riemannian manifold case.

Let $G$ be a semiplanar graph with nonnegative curvature and
$H^d(G)=\{u: Lu\equiv0, |u(x)|\leq C(d^G(p,x)+1)^d\}$ which is the
space of polynomial growth harmonic functions of growth degree less
than or equal to $d$ on $G$. By the method of Colding and Minicozzi
\cite{CM1,CM2,CM3}, the volume doubling property (\ref{VDG1}) and
the Poincar\'e inequality (\ref{PIG1}) imply that $\dim H^d(G)\leq
C(D)d^{v(D)}$ for any $d\geq 1,$ where $C(D)$ and $v(D)$ depend on
$D$ (see \cite{D1}). Instead of the volume doubling property
(\ref{VDG1}), inspired by \cite{CM3}, we use the relative volume
comparison (\ref{RVCG1}) to show that $\dim H^d(G)\leq C(D)d^{2}$.
It seems natural that the dimension estimate of $H^d(G)$ should
involve the maximal facial degree $D$ because the relative volume
comparison and the Poincar\'e inequality cannot avoid $D$, but the
estimate is still not satisfactory since $C(D)$ here is only a
dimensional constant in the Riemannian case.

Furthermore, we note that a semiplanar graph $G$ with nonnegative
curvature and $D_G\geq 43$ has a special structure of linear volume
growth like a one-sided cylinder, see Theorem \ref{LFC}. Inspired by
the work \cite{Sor}, in which Sormani proved that any polynomial
growth harmonic function on a  Riemannian manifold with one end and
nonnegative Ricci curvature of linear volume growth is constant, we
obtain the following theorem.

\begin{theorem} Let $G$ be a semiplanar graph with $\Sec (G)\geq 0$ and $D_G\geq 43$. Then for any $d>0$,
$$\dim H^d(G)=1.$$
\end{theorem}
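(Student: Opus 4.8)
The plan is to follow the strategy of Sormani \cite{Sor}, reducing everything to the single-end structure and bounded cross-sections that the linear volume growth of Theorem \ref{LFC} provides. First I would invoke Theorem \ref{LFC} to fix the geometry: outside a finite set, $G$ is a one-sided cylinder, so there is an exhaustion by finite sets $D_1\subset D_2\subset\cdots$ whose boundaries are disjoint separating cycles $C_n$ of uniformly bounded length $|C_n|\leq L(D)$, with $C_n$ at combinatorial distance comparable to $n$ from a fixed base point $p$. Since the paper has already shown that $G$ is parabolic, it then suffices to prove that every $u\in H^d(G)$ is bounded: for if $u$ is bounded, then $u+\sup|u|$ is a bounded positive superharmonic function, which must be constant by parabolicity, giving $\dim H^d(G)=1$.

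The engine is the discrete flux. For $u$ harmonic, summing the identity $\sum_{x\in D_n}d_x Lu(x)=0$ and cancelling the interior edges shows that the net flux $\sum_{e}(u(y)-u(x))$ across each cross-section $C_n$ vanishes and is independent of $n$. I would then argue that this zero-flux condition, together with the bounded cross-sections, forces $u$ to be bounded. Writing $a_n$ for the weighted average of $u$ on $C_n$, harmonicity in the slab between $C_n$ and $C_{n+1}$ controls $a_{n+1}-a_n$ in terms of the flux (which is zero) and the tangential oscillation, while the Poincar\'e inequality on each bounded slab controls the oscillation of $u$ on $C_n$ by the slab Dirichlet energy. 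The point, as on a cylinder, is that the only polynomially growing harmonic mode is the affine one, and that mode carries nonzero flux; since the flux vanishes, the linear part is absent, the remaining modes decay, and $u$ stays bounded. Because the end is combinatorially periodic, one may equivalently analyze harmonic functions through the transfer matrix $T$ relating the values of $u$ on consecutive cross-sections: eigenvalues off the unit circle give exponentially growing modes (excluded by polynomial growth) or decaying modes (of finite energy), while the eigenvalue $1$ contributes only the constant and linear modes, the latter removed by the vanishing flux. Either route shows that $u$ has finite Dirichlet energy $\sum_{e}(u(x)-u(y))^2<\infty$, and a finite-energy harmonic function on a recurrent graph is constant, which closes the argument in an equivalent way.

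The main obstacle is precisely this boundedness (equivalently finite-energy) step, i.e.\ ruling out linear growth by using that $G$ has a single end. The naive Caccioppoli estimate only yields Dirichlet energy $O(R^{2d-1})$ on $B_R(p)$, which is infinite in the limit and, on a two-ended flat cylinder, is genuinely realized by the nonconstant linear harmonic function; so the proof must use the presence of exactly one end in an essential way, through the vanishing of the flux across its single cross-section. Making the discrete flux and averaging argument quantitative — bounding $a_{n+1}-a_n$ and the cross-sectional oscillation uniformly in $n$ from the bounded geometry of the slabs, and establishing the exponential decay of the non-affine modes — is where the real work lies. The remaining implications, that a bounded (or finite-energy) harmonic function on our parabolic graph is constant, are standard.
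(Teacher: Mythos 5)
Your overall architecture is sound in its outer layers: Theorem \ref{LFC} does give the one-sided cylinder structure with cross-sections $\partial B_r(A)$ of size at most $D$ joined by paths of length at most $3D$; the discrete flux across each cross-section does vanish (sum $d_xLu(x)=0$ over the finite core and cancel interior edges); and once $u\in H^d(G)$ is known to be bounded, constancy follows from the Liouville/Harnack results already in the paper. The gap is the central step you yourself flag: deducing boundedness from ``zero flux $+$ polynomial growth $+$ bounded cross-sections.'' Neither of your two routes closes it. The transfer-matrix route assumes the end is combinatorially periodic, but Theorem \ref{LFC} explicitly does \emph{not} give this: the layers $L_m$ are independently layers of triangles or of squares, so the sequence of ``transfer matrices'' varies arbitrarily within a finite set and there is no common eigenbasis, no well-defined ``affine mode,'' and no spectral dichotomy without a genuine (and nontrivial) uniform-hyperbolicity or Lyapunov-exponent argument. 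The averaging route is not closed either: zero flux controls the drift of the cross-sectional averages $a_n$, but on a cylinder there exist zero-flux harmonic functions whose cross-sectional \emph{oscillation} grows exponentially, so oscillation must be killed by polynomial growth through a quantitative estimate you never supply; the Poincar\'e/Caccioppoli input you cite only yields Dirichlet energy $O(R^{2d-1})$ on $B_R$, which is not finite for $d\geq 1/2$, so the ``finite energy $+$ recurrence'' endgame is not reached.

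For comparison, the paper avoids flux and mode decompositions entirely. Linear volume growth $|B_{6r}(x)|\leq C(D)r$ upgrades the Lin--Xi Caccioppoli estimate to the pointwise bound $|\nabla u|(x)\leq C(D)r^{-1/2}\,\mathrm{osc}_{B_{6r}(x)}u$ (Corollary \ref{WGEC}), and the bounded-length paths inside $B_r(A)$ convert this into the self-improving oscillation inequality $M(r)\leq C(D)r^{-1/2}M(9r)$ for $M(r)=\mathrm{osc}_{\partial B_r(A)}u$. Iterating $k$ times and inserting the polynomial bound $M(9^kr)\leq C(9^kr)^d$ with $\delta=\tfrac{1}{2}\cdot 9^{-d}$ forces $M(r)=0$ for all large $r$, hence $u$ is constant. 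This mechanism is insensitive to the non-periodicity of the layers, which is exactly where your argument breaks; if you want to salvage your approach, the missing ingredient is a uniform decay estimate for zero-average harmonic data across a single (triangle or square) layer, proved for both layer types and iterated along the arbitrary layer sequence.
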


The final dimension estimate follows from combining the previous two
estimates.

\begin{theorem} Let $G$ be a semiplanar graph with $\Sec (G)\geq 0$. Then for any $d\geq 1$,
$$\dim H^d(G)\leq Cd^2,$$ where $C$ is an absolute constant.
\end{theorem}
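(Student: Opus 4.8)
The plan is to derive this global, $D$-independent bound by combining the two dimension estimates developed earlier in the paper, split according to the maximal facial degree $D_G$. The key observation is that the $D$-dependence in the estimate $\dim H^d(G)\leq C(D)d^2$ is only a genuine obstruction when $D$ is small, because for large $D$ a far stronger conclusion is already available.

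First I would assemble the two ingredients. On the one hand, the relative volume comparison (\ref{RVCG1}) and the Poincar\'e inequality (\ref{PIG1}), fed into the Colding--Minicozzi counting scheme (as in Delmotte \cite{D1}), yield
$$\dim H^d(G)\leq C(D)\,d^2,\qquad d\geq 1,$$
where $C(D)$ depends only on the maximal facial degree $D=D_G$. On the other hand, the structural result Theorem \ref{LFC} together with the Sormani-type rigidity argument gives that whenever $D_G\geq 43$ one has $\dim H^d(G)=1$ for every $d>0$.

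Then I would split into two cases. If $D_G\geq 43$, the rigidity statement gives $\dim H^d(G)=1\leq d^2$ for all $d\geq 1$, so the asserted bound holds with constant $1$. If instead $D_G\leq 42$, then $D_G$ ranges over the finite set $\{3,4,\dots,42\}$, whence $C(D_G)\leq \max_{3\leq D\leq 42}C(D)=:C_0$, a finite quantity independent of the particular graph $G$. Consequently $\dim H^d(G)\leq C_0\,d^2$. Setting $C=\max\{1,C_0\}$ produces an absolute constant valid in both regimes and finishes the proof.

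The final assembly is essentially bookkeeping once the two constituent theorems are in hand; the only real idea is the dichotomy at the threshold $D=43$, which confines the $D$-dependent constant $C(D)$ to a finite range of degrees and thereby promotes it to an absolute constant. The true difficulty lies upstream rather than here: obtaining the clean exponent $d^2$ (in place of $d^{v(D)}$) in the small-degree regime, which requires running the Colding--Minicozzi argument with the relative volume comparison (\ref{RVCG1}) rather than mere volume doubling (\ref{VDG1}), and establishing the sharp $\dim H^d(G)=1$ rigidity for $D_G\geq 43$ via the linear-volume-growth structure.
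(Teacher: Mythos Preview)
Your proposal is correct and matches the paper's own argument: the paper simply states that the result follows by combining Theorem \ref{PNG1T} (the $C(D)d^2$ bound) with Theorem \ref{PGTF} (the rigidity $\dim H^d(G)=1$ for $D_G\geq 43$) via the dichotomy of Lemma \ref{GFL1}, which is exactly the case split at $D=43$ that you describe. Your write-up actually spells out the bookkeeping more explicitly than the paper does.
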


For convenience, we may change the values of the constants $C, C(D)$
from line to line in the sequel.

\section{Preliminaries}

A graph is called planar if it can be embedded in the plane without
self-intersection of edges. We define a semiplanar graph similarly.
\begin{definition} A graph $G=(V,E)$ is called semiplanar if it can be embedded into a connected 2-manifold $S$ without self-intersection of edges and each face is homeomorphic to the closed disk with finite edges as the boundary.
\end{definition}

The embedding in the definition is called a strong embedding in
\cite{Ch}. Let $G=(V,E,F)$ denote the semiplanar graph with the set
of vertices, $V$, edges, $E$ and faces, $F$. Edges and faces are
regarded as closed subsets of $S$, and two objects from $V, E, F$
are called incident if one is a proper subset of the other. In this
paper, essentially for simplicity, we shall always assume that the
surface $S$ has no boundary except in Remark \ref{RMB} and $G$ is a
simple graph, i.e. without loops and multi-edges. Throughout this
paper, we write $x\in G$ instead $x\in V$ for the vertex $x.$ We
denote by $d_x$ the degree of the vertex $x\in G$ and by
$\deg(\sigma)$ the degree of the face $\sigma\in F$, i.e. the number
of edges incident to $\sigma$. Further, we assume that $3\leq d_x<
\infty$ and $3\leq \deg(\sigma)<\infty$ for each vertex $x$ and face
$\sigma$, which implies that $G$ is a locally finite graph. For each
semiplanar graph $G=(V,E,F)$, there is a unique metric space,
denoted by $S(G)$, which is obtained from replacing each face of $G$
by a regular polygon of side length one with the same facial degree
and gluing the faces along the common edges in $S$. $S(G)$ is called
the regular polygonal surface of the semiplanar graph $G$.

For a semiplanar graph $G$, the combinatorial curvature at each
vertex $x\in G$ is defined as
$$\Phi(x)=1-\frac{d_x}{2}+\sum_{\sigma\ni x}\frac{1}{\deg(\sigma)},$$ where the sum is taken over all the faces incident to $x$. This curvature can be read from the corresponding regular polygonal surface $S(G)$ as,
$$2\pi\Phi(x)=2\pi-\Sigma_x,$$ where $\Sigma_x$ is the total angle of
$S(G)$ at $x$. Positive curvature thus means  convexity at the
vertex. We shall prove that the semiplanar graph $G$ has nonnegative
curvature everywhere if and only if the regular polygonal surface
$S(G)$ is an Alexandrov space with nonnegative curvature, which is a
generalized sectional (Gaussian) curvature on metric spaces. In this
paper, we denote by $\Sec (G)\geq 0$ the semiplanar graph $G$ with
nonnegative combinatorial curvature and by $\Sec\ X\geq 0$ the
metric space $X$ with nonnegative curvature in the sense of
Alexandrov.

We recall some basic facts in metric geometry and Alexandrov
geometry. Readers are referred to \cite{BGP,BBI}.

A curve $\gamma$ in a metric space $(X,d)$ is a continuous map
$\gamma : [a,b]\rightarrow X.$ The length of a curve $\gamma$ is
defined as
$$L(\gamma)=\sup\left\{\sum_{i=1}^Nd(\gamma(y_{i-1}),\gamma(y_i)):
\text{any\ partition }a=y_0<y_1<\ldots<y_N=b\right\}.$$ A curve
$\gamma$ is called rectifiable if $L(\gamma)<\infty.$ Given $x, y\in
X$, denote by $\Gamma(x,y)$ the set of rectifiable curves joining
$x$ and $y$. A metric space $(X,d)$ is called a length space if
$d(x,y)=\inf_{\gamma\in\Gamma(x,y)}\{L(\gamma)\},$ for any $x,y \in
X$. A curve $\gamma:[a,b]\rightarrow X$ is called a geodesic if
$d(\gamma(a),\gamma(b))=L(\gamma).$ It is always true by the
definition of the length of a curve that $d(\gamma(a),\gamma(b))\leq
L(\gamma).$ A geodesic is a shortest curve (or shortest path)
joining the two end-points. A geodesic space is a length space
$(X,d)$ satisfying that for any $x,y\in X,$ there is a (not
necessarily unique) geodesic joining $x$ and $y$.

Denote by $\Pi_{\kappa},$ $\kappa\in \mathds{R}$ the model space
which is a 2-dimensional, simply connected space form of constant
curvature $\kappa$. Typical ones are $$\Pi_{\kappa}=\left\{
\begin{array}{ll}
\mathds{R}^2,&\kappa=0\\
S^2,&\kappa=1\\
\mathds{H}^2,&\kappa=-1
\end{array}\right..$$ In a geodesic space $(X,d)$, we denote by $\gamma_{xy}$ one of the geodesics joining $x$ and $y$, for $x,y\in X$. Given three points $x, y , z \in X,$ denote by $\triangle_{xyz}$ the geodesic triangle with edges $\gamma_{xy}, \gamma_{yz}, \gamma_{zx}.$ There exists a unique (up to an isometry) geodesic triangle, $\triangle_{\bar x\bar y\bar z}$, in $\Pi_{\kappa}$ ($d(x,y)+d(y,z)+d(z,x)<\frac{2\pi}{\sqrt{\kappa}}$ if ${\kappa>0}$) such that $d(\bar x, \bar y)=d(x,y), d(\bar y, \bar z)=d(y,z)$ and $d(\bar z, \bar x)=d(z,x).$ We call $\triangle_{\bar x\bar y\bar z}$ the comparison triangle in $\Pi_{\kappa}$.

\begin{definition} A complete geodesic space $(X,d)$ is called an Alexandrov space with sectional curvature bounded below by $\kappa$ ($\Sec\ X\geq \kappa$ for short) if for any $p\in X$, there exists a neighborhood $U_p$ of $p$ such that for any $x,y,z\in U_p$ (with $d(x,y)+d(y,z)+d(z,x)<\frac{2\pi}{\sqrt{\kappa}}$ if $\kappa>0$), any geodesic triangle $\triangle_{xyz}$, and any $w\in\gamma_{yz}$, letting $\bar w\in \gamma_{\bar y \bar z}$ be in the comparison triangle $\triangle_{\bar x\bar y\bar z}$ in $\Pi_{\kappa}$ satisfying $d(\bar y,\bar w)=d(y,w)$ and $d(\bar w,\bar z)=d(w,z)$,  we have $$d(x,w)\geq d(\bar x,\bar w).$$
\end{definition}

In other words, an Alexandrov space $(X,d)$ is a geodesic space
which locally satisfies the Toponogov triangle comparison theorem
for the sectional curvature. It is proved in \cite{BGP} that the
Hausdorff dimension of an Alexandrov space $(X,d)$, $\dim_H(X)$, is
an integer or infinity. One dimensional Alexandrov spaces are:
straight line, $S^1$, ray and closed interval.

Let $(X,d)$ be an Alexandrov space, $B_R^X(p)$ denote the closed
geodesic ball centered at $p\in X$ of radius $R>0$, i.e.
$B_R^X(p)=\{x\in X: d(p,x)\leq R\}.$ The well known Bishop-Gromov
volume comparison theorem holds on Alexandrov spaces \cite{BBI}.

\begin{theorem} Let $(X,d)$ be an $n$-dimensional Alexandrov space with nonnegative curvature, i.e. $\Sec\ X\geq0.$ Then for any $p\in X, 0<r<R,$ it holds that
\begin{equation}\label{RVCA1}\frac{\mathcal{H}^n(B_R^X(p))}{\mathcal{H}^n(B_r^X(p))}\leq\left(\frac{R}{r}\right)^n,\end{equation}
\begin{equation}\label{VDA1}\mathcal{H}^n(B_{2R}^X(p))\leq2^n \mathcal{H}^n(B_R^X(p)),\end{equation} \begin{equation}\label{VGA1}\mathcal{H}^n(B_R^X(p))\leq C(n)R^n,\end{equation} where $\mathcal{H}^n$ is the $n$-dimensional Hausdorff measure.
\end{theorem}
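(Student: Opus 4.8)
The plan is to derive everything from the relative volume comparison (\ref{RVCA1}); the doubling property (\ref{VDA1}) is then just the special case $r=R$, $R\mapsto 2R$, and the growth bound (\ref{VGA1}) follows by letting $r\to 0$. Fix $p\in X$. The first step is to set up polar coordinates around $p$. Since $(X,d)$ is $n$-dimensional, $\mathcal{H}^n$-almost every point is joined to $p$ by a unique geodesic, and the space of directions $\Sigma_p$ at $p$ is itself a compact $(n-1)$-dimensional Alexandrov space with curvature $\geq 1$. For a direction $\xi\in\Sigma_p$ and $t>0$ let $\gamma_\xi(t)$ be the point at distance $t$ from $p$ along the geodesic issuing from $p$ in direction $\xi$, defined for $t$ up to a cut value $\ell(\xi)$. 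This radial map is a bijection onto a full-measure subset of $X$, and by the coarea formula $\mathcal{H}^n$ disintegrates as $d\mathcal{H}^n=\theta(t,\xi)\,dt\,d\mathcal{H}^{n-1}(\xi)$ for a nonnegative density $\theta$, so that $\mathcal{H}^n(B_R^X(p))=\int_{\Sigma_p}\int_0^{\min(R,\ell(\xi))}\theta(t,\xi)\,dt\,d\mathcal{H}^{n-1}(\xi)$.

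The heart of the argument is the monotonicity of this density. For nonnegative curvature the comparison model is $\mathds{R}^2$ (more precisely $\mathds{R}^n$), in which the density is $t^{n-1}$. I would show that for $\mathcal{H}^{n-1}$-a.e.\ fixed $\xi$ the function $t\mapsto \theta(t,\xi)/t^{n-1}$ is non-increasing on $(0,\ell(\xi))$. This is the purely metric replacement for the Jacobi-field/Riccati comparison of the smooth theory: it is supplied by the monotonicity of comparison angles, namely that for two radial geodesics the Euclidean comparison angle $\tilde\angle_p(\gamma_\xi(s),\gamma_\eta(s))$ is non-increasing in $s$, which is a direct consequence of the Toponogov comparison in the Definition above (actual triangles are fatter than their comparison triangles). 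Quantitatively this says geodesics issuing from $p$ spread no faster than in $\mathds{R}^2$, so the infinitesimal solid angle subtended at radius $t$ contracts relative to the Euclidean rate, forcing $\theta(t,\xi)/t^{n-1}$ to decrease.

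Granting this pointwise monotonicity, a standard one-dimensional integration lemma (if $f(t)/t^{n-1}$ is non-increasing then the truncated integrals satisfy $\int_0^R f\big/\int_0^r f\leq (R/r)^n$, extended to include the cut value $\ell(\xi)$) yields the inequality direction-by-direction, and integrating over $\xi\in\Sigma_p$ shows that $R\mapsto \mathcal{H}^n(B_R^X(p))/R^n$ is non-increasing. Inequality (\ref{RVCA1}) is then immediate, since
$$\frac{\mathcal{H}^n(B_R^X(p))}{\mathcal{H}^n(B_r^X(p))}=\frac{\mathcal{H}^n(B_R^X(p))/R^n}{\mathcal{H}^n(B_r^X(p))/r^n}\left(\frac{R}{r}\right)^n\leq\left(\frac{R}{r}\right)^n,$$
and (\ref{VDA1}) follows by taking $r=R$ and replacing $R$ by $2R$. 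For (\ref{VGA1}) I would let $r\to 0$: the ratio $\mathcal{H}^n(B_r^X(p))/r^n$ converges to the normalized volume of the unit ball in the tangent cone, which equals $\mathcal{H}^{n-1}(\Sigma_p)/n$, and by induction on dimension (Bishop–Gromov on the $(n-1)$-dimensional space of directions $\Sigma_p$ with curvature $\geq 1$, compared against the round sphere) one has $\mathcal{H}^{n-1}(\Sigma_p)\leq \mathcal{H}^{n-1}(S^{n-1})$. Hence $\mathcal{H}^n(B_R^X(p))/R^n\leq \lim_{r\to 0}\mathcal{H}^n(B_r^X(p))/r^n\leq \omega_n$, giving (\ref{VGA1}) with $C(n)=\omega_n$ the volume of the Euclidean unit ball.

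The main obstacle is the density monotonicity together with its measure-theoretic underpinnings: justifying that $\mathcal{H}^n$-a.e.\ point has a unique geodesic to $p$, setting up the coarea disintegration rigorously on the non-smooth space, and controlling the cut function $\ell(\xi)$ and possible branching of geodesics. In the Riemannian setting these are handled by the smooth exponential map and Jacobi fields; in the Alexandrov setting each must instead be established metrically, with comparison-angle monotonicity and the first-variation formula doing the work of the Riccati equation. This is exactly the content developed in \cite{BGP,BBI}, to which I would defer for the technical execution.
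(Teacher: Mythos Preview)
The paper does not prove this theorem at all: it is stated as the ``well known Bishop--Gromov volume comparison theorem'' and simply cited from \cite{BBI}, with no argument given. Your sketch is therefore not to be compared against anything in the paper; rather, it is a plausible outline of the standard proof one finds in the references \cite{BGP,BBI} to which both you and the paper ultimately defer.

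As a sketch your outline is sound: the reduction of (\ref{VDA1}) and (\ref{VGA1}) to the monotonicity of $R\mapsto \mathcal{H}^n(B_R^X(p))/R^n$ is correct, and the mechanism you identify --- comparison-angle monotonicity forcing sublinear spreading of geodesics from $p$ --- is the right geometric input. You are also honest about where the genuine work lies (a.e.\ uniqueness of geodesics to $p$, disintegration of $\mathcal{H}^n$ in the absence of a smooth exponential map, the cut locus), and you correctly point to \cite{BGP,BBI} for the execution. One minor slip: you write ``the comparison model is $\mathds{R}^2$ (more precisely $\mathds{R}^n$)''; just say $\mathds{R}^n$. Since the paper treats this as background, what you have written is more than adequate for the purpose.
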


A curve $\gamma:(-\infty, \infty)\rightarrow X$ is called an
infinite geodesic if for any $s,t\in(-\infty,\infty)$,
$d(\gamma(s),\gamma(t))=L(\gamma|_{[s,t]}),$ i.e. every restriction
of $\gamma$ to a subinterval is a geodesic (shortest path). For two
metric spaces $(X,d_X), (Y,d_Y),$ the metric product of $X$ and $Y$
is a product space $X\times Y$ equipped with the metric $d_{X\times
Y}$ which is defined as
$$d_{X\times Y}((x_1,y_1),(x_2, y_2))=\sqrt{d_X^2(x_1,x_2)+d_Y^2(y_1,y_2)},$$ for any $(x_1,y_1),(x_2,y_2)\in X\times Y$.
The Cheeger-Gromoll splitting theorem holds on Alexandrov spaces
with nonnegative curvature \cite{BBI,Mil,M,ZZ}.

\begin{theorem}\label{SPLIT} Let $(X,d)$ be an $n$-dimensional
  Alexandrov space with $\Sec\ X\geq0$. If it contains an infinite
  geodesic, then $X$ is isometric to a metric product
  $Y\times\mathds{R},$ where $Y$ is an $(n-1)$-dimensional Alexandrov
  space with $\Sec\ Y\geq0.$
\end{theorem}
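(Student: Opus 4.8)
The plan is to follow the Busemann-function proof of the Cheeger--Gromoll splitting theorem, adapted to the Alexandrov setting where Toponogov's comparison replaces the Ricci Laplacian estimate. Write the infinite geodesic as $\gamma:(-\infty,\infty)\to X$ and split it into two rays $\gamma^+(t)=\gamma(t)$ and $\gamma^-(t)=\gamma(-t)$, $t\ge 0$. First I would define the associated Busemann functions $b^\pm(x)=\lim_{t\to\infty}\bigl(t-d(x,\gamma^\pm(t))\bigr)$. For each fixed $x$ the function $t\mapsto t-d(x,\gamma^\pm(t))$ is nondecreasing (triangle inequality) and bounded above by $d(x,\gamma(0))$, so the limits exist and $b^\pm$ are $1$-Lipschitz. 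Along the line one computes $b^+(\gamma(s))=s$ and $b^-(\gamma(s))=-s$.

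The geometric heart is to show that $b^+$ and $b^-$ are \emph{concave} on $X$. In the Euclidean model $\Pi_0=\mathds{R}^2$ the Busemann function of a ray is affine, hence both convex and concave; the comparison inequality $d(x,w)\ge d(\bar x,\bar w)$ defining $\Sec\ X\ge 0$ says geodesic triangles in $X$ are ``fatter'' than their Euclidean comparison triangles. I would use this to control the second difference of $b^+$ along an arbitrary unit-speed geodesic $\sigma$: comparing the triangle with vertices on $\sigma$ and a far point $\gamma^+(t)$ against its comparison triangle in $\mathds{R}^2$ and letting $t\to\infty$ yields $b^+(\sigma(\tfrac{s_0+s_1}{2}))\ge \tfrac12\bigl(b^+(\sigma(s_0))+b^+(\sigma(s_1))\bigr)$, i.e. concavity. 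The same argument applies to $b^-$.

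Next set $f=b^++b^-$. The triangle inequality $d(x,\gamma^+(t))+d(x,\gamma^-(s))\ge d(\gamma^+(t),\gamma^-(s))=t+s$ gives $f\le 0$, while $f\equiv 0$ on $\gamma$. Since $f$ is concave and attains its maximal value $0$ at the interior point $\gamma(0)$, the maximum principle for concave functions forces $f\equiv 0$: otherwise, extending a geodesic from a point where $f<0$ through $\gamma(0)$ and applying concavity would produce a point with $f>0$, contradicting $f\le 0$. Hence $b:=b^+=-b^-$, and since $b^+$ and $b^-$ are both concave, $b$ is simultaneously concave and convex, i.e. an \emph{affine} function with $|\nabla b|\equiv 1$.

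It then remains to deduce the isometric splitting from the existence of an affine function whose gradient has unit norm. I would put $Y:=b^{-1}(0)$ and use the gradient flow $\Phi_t$ of $b$: because $b$ is affine its gradient curves are unit-speed geodesics meeting each level set orthogonally, so the map $Y\times\mathds{R}\to X$, $(y,t)\mapsto\Phi_t(y)$, is well defined, surjective, and distance-preserving, giving $X\cong Y\times\mathds{R}$. Finally $Y$, being a level set of an affine function (equivalently a convex subset cut out by two opposite affine functions), is itself a complete geodesic space which inherits $\Sec\ Y\ge 0$ from the comparison on $X$ and has Hausdorff dimension $n-1$. The main obstacle is the low regularity of Alexandrov spaces: since there is no smooth structure, the concavity estimate, the identification of gradient curves with geodesics, and the verification that $(y,t)\mapsto\Phi_t(y)$ is a metric isometry must all be carried out through first-variation and angle-comparison arguments (and the theory of gradient curves of semiconcave functions) rather than through the classical Bochner formula; establishing that $Y$ is again an Alexandrov space of dimension exactly $n-1$ is the delicate rigidity step.
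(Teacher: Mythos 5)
The paper offers no proof of Theorem \ref{SPLIT} at all: it is imported as a known result from the literature (\cite{BBI,Mil,M,ZZ}), so your proposal can only be measured against the standard proof there --- whose Busemann-function strategy you do correctly identify. However, as written your outline has a sign error that makes its central step internally inconsistent. With your convention $b^{\pm}(x)=\lim_{t\to\infty}\bigl(t-d(x,\gamma^{\pm}(t))\bigr)$, nonnegative curvature makes $b^{\pm}$ \emph{convex}, not concave: setting $a=d(\gamma^{+}(t),\sigma(s_0))$, $b=d(\gamma^{+}(t),\sigma(s_1))$, $c=|s_1-s_0|$, the comparison inequality $d(x,w)\ge d(\bar x,\bar w)$ gives the Euclidean median bound $d\bigl(\gamma^{+}(t),\sigma(\tfrac{s_0+s_1}{2})\bigr)^2\ge \tfrac12(a^2+b^2)-\tfrac14 c^2$, and expanding $a=t-b^{+}(\sigma(s_0))+o(1)$, $b=t-b^{+}(\sigma(s_1))+o(1)$ and letting $t\to\infty$ yields $b^{+}(\sigma(\tfrac{s_0+s_1}{2}))\le\tfrac12\bigl(b^{+}(\sigma(s_0))+b^{+}(\sigma(s_1))\bigr)$ --- the opposite of what you assert (concavity holds for the other convention $\lim(d-t)$). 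Tellingly, the maximum-principle argument you then run is exactly the one valid for \emph{convex} functions: for concave $f$ with $f\le 0$, $f(\gamma(0))=0$ and $f(x)<0$, concavity gives $0\ge\lambda f(x)+(1-\lambda)f(z)$, which only bounds $f(z)$ \emph{above} by a positive number and produces no contradiction; for convex $f$ it forces $f(z)>0$ and does. So your skeleton becomes the standard correct one only after swapping ``concave'' and ``convex'' throughout.

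There is also a genuine gap that the sign swap does not repair: your maximum principle requires extending a geodesic from $x$ through $\gamma(0)$ strictly beyond $\gamma(0)$, but finite-dimensional Alexandrov spaces are in general not geodesically complete (a geodesic reaching a cone point of total angle less than $2\pi$ admits no continuation, since the space of directions there has diameter less than $\pi$), and nothing in your argument guarantees that the particular extension you need exists. The standard proofs circumvent this: through every point $x$ one constructs rays asymptotic to $\gamma^{+}$ and $\gamma^{-}$ and shows by Toponogov angle comparison that their concatenation is again a line, which gives $b^{+}+b^{-}\equiv 0$ directly, with no appeal to extending geodesics. Finally, the passage from an affine function with unit gradient to the isometric product structure, and the verification that $Y=b^{-1}(0)$ is an $(n-1)$-dimensional Alexandrov space with $\Sec\ Y\ge 0$, is precisely where the real work in \cite{Mil,M,ZZ} lies; you flag this honestly but do not supply it, so the proposal is an accurate road map of the known proof rather than a complete argument.
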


Let $(X,d)$ be an $n$-dimensional Alexandrov space with $\Sec\
X\geq\kappa, \kappa\in\mathds{R}.$ The tangent space at each point
$p\in X$ is well defined, denoted by $T_pX$, which is the pointed
Gromov-Hausdorff limit of the rescaling sequence $(X,\lambda d,p)$
as $\lambda\rightarrow\infty$ (see \cite{BBI}). A point $p\in X$ is
called regular (resp. singular) if $T_pX$ is (resp. not) isometric
to $\mathds{R}^n$. Let $S(X)$ denote the set of singular points in
$X$. It is known that $\mathcal{H}^n(S(X))=0$. Otsu and Shioya
\cite{OS} obtained the $C^1$-differentiable and $C^0$-Riemannian
structure on the regular part of $X$, $X\setminus S(X)$. A function
$f$ defined on a domain $\Omega\subset X$ is called Lipschitz if
there is a constant C such that for any $x,y\in \Omega$,
$|f(x)-f(y)|\leq Cd(x,y)$. It can be shown that every Lipschitz
function is differentiable $\mathcal{H}^n$-almost everywhere and
with bounded gradient $|\nabla f|$ (see \cite{C}). Let $Lip(\Omega)$
denote the set of Lipschitz functions on $\Omega$. For any
precompact domain $\Omega\subset X$ and $f\in Lip(\Omega)$, the
$W^{1,2}$ norm of $f$ is  defined as
$$\|f\|_{W^{1,2}(\Omega)}^2=\int_{\Omega}f^2+\int_{\Omega}|\nabla f|^2.$$ The $W^{1,2}$ space on $\Omega$, denoted by $W^{1,2}(\Omega)$, is the completion of $Lip(\Omega)$ with respect to the $W^{1,2}$ norm. A function $f\in W^{1,2}_{loc}(X)$ if for any precompact domain $\Omega\subset\subset X$, $f|_{\Omega}\in W^{1,2}(\Omega)$. The Poincar\'e inequality was proved in \cite{KMS,Hu1}.
\begin{theorem} Let $(X,d)$ be an $n$-dimensional Alexandrov space
  with $\Sec\ X\geq 0$ and $u\in W^{1,2}_{loc}(X),$ then for any
  $R>0$ and $p\in X,$
\begin{equation}\label{PIA1}\int_{B_R^X(p)}|u-u_{B_R}|^2\leq C(n)R^2\int_{B_R^X(p)}|\triangledown u|^2,\end{equation} where $u_{B_R}=\frac{1}{\mathcal{H}^n(B_R^X(p))}\int_{B_R^X(p)}u.$
\end{theorem}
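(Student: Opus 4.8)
The plan is to deduce (\ref{PIA1}) from a \emph{segment inequality}, whose proof is powered by the Bishop--Gromov comparison (\ref{RVCA1}) and (\ref{VDA1}). Since $Lip(B_R^X(p))$ is dense in $W^{1,2}(B_R^X(p))$ and both sides of (\ref{PIA1}) are continuous with respect to the $W^{1,2}$ norm, I may assume that $u$ is Lipschitz. By the Otsu--Shioya structure, $\nabla u$ is then defined $\mathcal{H}^n$-almost everywhere on the regular part $X\setminus S(X)$, which carries full measure because $\mathcal{H}^n(S(X))=0$. The argument rests on the elementary identity
\begin{equation}\label{P-dbl}
\int_{B_R^X(p)}|u-u_{B_R}|^2\,d\mathcal{H}^n
=\frac{1}{2\mathcal{H}^n(B_R^X(p))}\int_{B_R^X(p)}\int_{B_R^X(p)}|u(x)-u(y)|^2\,d\mathcal{H}^n(x)\,d\mathcal{H}^n(y),
\end{equation}
so that it suffices to control the double integral on the right by the Dirichlet energy.

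For $\mathcal{H}^n$-almost every pair $(x,y)$ there is a unique minimizing geodesic $\gamma_{xy}$, parametrized by arclength; along it $u\circ\gamma_{xy}$ is absolutely continuous with $|(u\circ\gamma_{xy})'|\le|\nabla u|\circ\gamma_{xy}$, so the Cauchy--Schwarz inequality yields
\begin{equation}\label{P-ftc}
|u(x)-u(y)|^2\le d(x,y)\int_{\gamma_{xy}}|\nabla u|^2\,ds\le 2R\int_{\gamma_{xy}}|\nabla u|^2\,ds.
\end{equation}
Substituting (\ref{P-ftc}) into (\ref{P-dbl}) reduces the theorem to the segment inequality
\begin{equation}\label{P-seg}
\int_{B_R^X(p)}\int_{B_R^X(p)}\Big(\int_{\gamma_{xy}}|\nabla u|^2\,ds\Big)\,d\mathcal{H}^n(x)\,d\mathcal{H}^n(y)
\le C(n)\,R\,\mathcal{H}^n(B_R^X(p))\int_{B_{2R}^X(p)}|\nabla u|^2\,d\mathcal{H}^n,
\end{equation}
in which the geodesics $\gamma_{xy}$ are confined to $B_{2R}^X(p)$.

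To prove (\ref{P-seg}) I fix $y$ and pass to polar coordinates centered at $y$: writing $x=\exp_y(r\theta)$ with $r=d(x,y)$ and $\theta$ ranging over the space of directions $\Sigma_y$, one has $d\mathcal{H}^n(x)=J(r,\theta)\,dr\,d\nu_y(\theta)$ for an area density $J$ defined almost everywhere, and $\gamma_{xy}(s)=\exp_y((r-s)\theta)$. The content of nonnegative curvature, in the form (\ref{RVCA1}), is exactly the monotonicity that $r\mapsto J(r,\theta)/r^{n-1}$ is non-increasing. Expressing the line integral in these coordinates and exchanging the $s$- and $r$-integrations by Fubini, the inner weight becomes $\int_s^{R}J(r,\theta)\,dr$, which the monotonicity controls by $J(s,\theta)$ up to a factor $C(n)R$; the integrable $r^{1-n}$ singularity produced at each endpoint is absorbed once the second endpoint is integrated out and the doubling property (\ref{VDA1}) is used to compare the volumes of $B_R^X(p)$, $B_{2R}^X(p)$ and the balls centered along the geodesics. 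This establishes (\ref{P-seg}), and combining it with (\ref{P-dbl})--(\ref{P-ftc}) gives the weak Poincar\'e inequality $\int_{B_R^X(p)}|u-u_{B_R}|^2\le C(n)R^2\int_{B_{2R}^X(p)}|\nabla u|^2$.

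It remains to replace $B_{2R}^X(p)$ by $B_R^X(p)$, thereby obtaining (\ref{PIA1}) exactly as stated. For this I would invoke the standard self-improvement of Poincar\'e inequalities on metric measure spaces: $X$ is a complete geodesic (hence length) space equipped with the doubling measure $\mathcal{H}^n$ by (\ref{VDA1}), and on such a space a weak $(2,2)$-Poincar\'e inequality with dilation factor $\lambda=2$ upgrades, via a Whitney-type chaining argument, to the same-ball inequality at the cost of enlarging the constant $C(n)$. The main obstacle is the rigorous justification of the change of variables and of the one-dimensional calculus underlying (\ref{P-seg}) in the absence of a smooth exponential map: one must verify that for $\mathcal{H}^n$-almost every $y$ the polar map is almost everywhere defined and non-degenerate, that the area density $J(\cdot,\theta)$ exists and inherits the comparison monotonicity for $\nu_y$-almost every direction, and that $u\circ\gamma_{xy}$ is absolutely continuous along almost every geodesic. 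This is precisely where the Alexandrov-specific structure theory---the space of directions $\Sigma_y$, the gradient exponential map, and the full-measure regular set of Otsu--Shioya together with $\mathcal{H}^n(S(X))=0$---must be used to control how the geodesic flow meets the singular set.
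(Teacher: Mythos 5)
The paper contains no proof of this statement to compare against: it is quoted from the literature with the single sentence ``The Poincar\'e inequality was proved in \cite{KMS,Hu1}.'' So your proposal must be measured against the cited proofs, and your overall architecture is indeed the standard route there: the variance identity, a Cheeger--Colding-type segment inequality proved by Jacobian comparison in polar coordinates, and then the weak-to-strong self-improvement (from dilation factor $2$ to the same ball) valid on any complete geodesic space with a doubling measure, in the sense of Haj\l asz--Koskela. Two of your preliminary steps are even easier than you suggest: density of Lipschitz functions is immediate because this paper \emph{defines} $W^{1,2}(\Omega)$ as the completion of $Lip(\Omega)$, and your stated segment inequality, with the factor $R\,\mathcal{H}^n(B_R^X(p))$, is exactly the correct one.

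There is, however, a genuine quantitative gap in how you propose to \emph{prove} the segment inequality, and it is fatal for precisely the spaces this paper studies. You bound the inner weight by $\int_s^{R(\theta)}J(r,\theta)\,dr\leq J(s,\theta)\,s^{1-n}(2R)^n/n$ and then ``absorb the $s^{1-n}$ singularity by integrating out the second endpoint.'' Carried out, this gives (via the Bishop upper bound \eqref{VGA1}, not doubling, since $\int_{B_R}d(z,y)^{1-n}\,d\mathcal{H}^n(y)\leq C(n)R$) only
\begin{equation*}
\int_{B_R^X(p)}\int_{B_R^X(p)}\int_{\gamma_{xy}}|\nabla u|^2\,ds\,d\mathcal{H}^n(x)\,d\mathcal{H}^n(y)\leq C(n)R^{n+1}\int_{B_{2R}^X(p)}|\nabla u|^2,
\end{equation*}
which is strictly weaker than $C(n)R\,\mathcal{H}^n(B_R^X(p))\int_{B_{2R}^X(p)}|\nabla u|^2$ unless $\mathcal{H}^n(B_R)\geq cR^n$. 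Fed into your variance identity it yields $C(n)R^{n+2}/\mathcal{H}^n(B_R^X(p))$ in place of $C(n)R^2$; on a flat cylinder --- which by the paper's own classification (Theorem \ref{METRICC}) is one of the surfaces $S(G)$ in play --- one has $\mathcal{H}^2(B_R)\sim R$, so your bound degenerates to $\sim R^3$. The missing idea is Cheeger--Colding's midpoint splitting: estimate only the half of each geodesic nearer its far endpoint, where $s\geq r/2$ forces $J(r,\theta)\leq 2^{n-1}J(s,\theta)$, hence $\int_s^{\min(2s,R(\theta))}J(r,\theta)\,dr\leq 2^{n}R\,J(s,\theta)$ with no singularity at all; integrating the base point over $B_R$ then produces exactly the factor $R\,\mathcal{H}^n(B_R^X(p))$, and the other half follows by exchanging the roles of $x$ and $y$. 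Two smaller caveats: the directional monotonicity of $J(r,\theta)/r^{n-1}$ is the \emph{infinitesimal} statement, proved from Toponogov via the radial contraction (it implies \eqref{RVCA1}, not conversely, so it does not follow ``in the form \eqref{RVCA1}'' as you write); and the Alexandrov-specific technicalities you correctly identify but defer (the polar disintegration $d\mathcal{H}^n=J\,dr\,d\nu_y$, a.e.\ uniqueness of geodesics, absolute continuity of $u\circ\gamma_{xy}$ along a.e.\ geodesic) are the actual technical core of \cite{KMS}, so even with the repair above your text remains an outline of the known proof rather than a proof.
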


Let $(X,d)$ be a geodesic space and
$\{B_{R_i}^X(p)\}_{i=1}^{\infty}$ be an exhaustion of $X$, i.e.
$B_{R_i}^X(p)\subset B_{R_{i+1}}^X(p)$ for any $i\geq 1$ and
$X=\bigcup_{\i=1}^\infty B_{R_i}^X(p)$, equivalently $R_i\leq
R_{i+1}$ and $R_i\rightarrow\infty$ as $i\rightarrow \infty$. A
connected component $E$ of $X\setminus B_{R_i}^X(p)$ is called
connecting to infinity if there is a sequence of points
$\{q_j\}_{j=1}^{\infty}$ in $E$ such that $d(p,q_j)\rightarrow
\infty$ as $j\rightarrow \infty$. The number of connected components
of $X\setminus B_{R_i}^X(p)$ connecting to infinity, denoted by
$N_i$, is nondecreasing in $i$. Then the limit
$N(X)=\lim_{i\rightarrow\infty}N_i$ is well defined and called the
number of ends of $X$. It is easy to show that $N(X)$ does not
depend on the choice of the exhaustion of $X$,
$\{B_{R_i}^X(p)\}_{i=1}^{\infty}$. Given a connected graph
$G=(V,E)$, let $G_1$ denote the 1-dimensional simplicial complex of
$G$, i.e. a metric space obtained from $G$ by assigning each edge
the length one. Then $G_1$ is a geodesic space and $N(G_1)$ is well
defined. If $G$ is a semiplanar graph and $S(G)$ is the
corresponding regular polygonal surface, then we can also define the
number of ends of $S(G)$, $N(S(G))$.

In the sequel, we recall some facts on the combinatorial structure
of semiplanar graphs. The Gauss-Bonnet formula for the semiplanar
graph was proved in \cite{DM,CC}.

\begin{theorem}\label{GBFT} Let $G$ be a semiplanar graph, $S(G)$ be the corresponding regular polygonal surface, and $t=N(S(G))$. If $G$ has only finitely many vertices with negative curvature, then there exists a closed 2-manifold M, so that $S(G)$ is homeomorphic to $M$ minus t points, and
\begin{equation}\label{GBF}\sum_{x\in G}\Phi(x)\leq \chi(S(G)):=\chi(M)-t.\end{equation}
Moreover, $G$ has at most finitely many vertices with nonvanishing
curvature.
\end{theorem}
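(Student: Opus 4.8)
The backbone of the argument is the elementary combinatorial Gauss--Bonnet identity, which I would establish first for the closed (finite) case and then in a version with boundary. Summing $\Phi(x)=1-\frac{d_x}{2}+\sum_{\sigma\ni x}\frac{1}{\deg(\sigma)}$ over all vertices of a finite closed polygonal surface and double-counting the last term over incident faces gives
\begin{equation*}
\sum_{x}1=|V|,\qquad \sum_{x}\frac{d_x}{2}=|E|,\qquad \sum_{x}\sum_{\sigma\ni x}\frac{1}{\deg(\sigma)}=\sum_{\sigma}\frac{\deg(\sigma)}{\deg(\sigma)}=|F|,
\end{equation*}
so that $\sum_{x}\Phi(x)=|V|-|E|+|F|=\chi$. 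For a finite subcomplex with boundary the same bookkeeping produces $\chi$ minus a boundary term that records the discrete turning (geodesic curvature) of the truncation curve; this boundary version is the tool for passing to the infinite graph by exhaustion.

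Next I would pin down the local structure forced by nonnegative curvature. Since each incident face has degree at least $3$, one has $\Phi(x)\le 1-\frac{d_x}{2}+\frac{d_x}{3}=1-\frac{d_x}{6}$, so every vertex with $\Phi(x)\ge 0$ satisfies $d_x\le 6$. The crucial point is then that the strictly positive values of $\Phi$ are bounded below by a universal constant $\varepsilon_0>0$: once $d_x\le 6$ is fixed, the admissible local configurations form a controlled family, and the question reduces to how closely a sum of at most six unit fractions $\sum 1/\deg(\sigma)$ can approach the target $\frac{d_x}{2}-1$ from above without hitting it, a Sylvester-type number-theoretic estimate responsible for the denominators behind the number $43$.

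With these in hand I would treat finiteness, topology, and the inequality together via exhaustion. Applying the boundary version of the identity to an exhausting sequence of finite subcomplexes, and using that only finitely many vertices have negative curvature, I would bound the total positive curvature; combined with the lower bound $\varepsilon_0$ this forces only finitely many vertices with $\Phi(x)\ne 0$, which is the last assertion. Let $K$ be a finite subcomplex containing all of them; outside $K$ the surface $S(G)$ is flat and splits into $t=N(S(G))$ ends, each of which I would fill in by a single point to obtain a closed $2$-manifold $M$ with $S(G)\cong M\setminus\{t\ \text{points}\}$, and hence $\chi(S(G))=\chi(M)-t$. Finally, letting the truncation sweep out $S(G)$ through boundary curves lying in the flat region around the ends, the interior curvature sum converges to the now-finite $\sum_x\Phi(x)$ while the boundary turning converges to a nonnegative limit (the curvature escaping into the ends), which yields $\sum_x\Phi(x)\le\chi(M)-t$.

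The main obstacle is two-sided. One side is the sharp lower bound $\varepsilon_0$ on positive curvature, a delicate but finite case analysis over the admissible vertex types. The other, and I expect the more serious, is making the exhaustion argument clean: controlling the sign and the limit of the boundary turning so that the deficit from equality is exactly absorbed by the $-t$ ends, which requires verifying that each flat end contributes nonnegatively, and ensuring that the topological filling-in genuinely produces a closed manifold rather than a more singular quotient.
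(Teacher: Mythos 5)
First, a point of comparison: the paper does not prove this theorem at all --- it is stated in the Preliminaries and explicitly attributed to \cite{DM,CC} (``The Gauss-Bonnet formula for the semiplanar graph was proved in [DM,CC]''), so your proposal has to be measured against those original proofs rather than against anything in this paper. Your outline does track the actual strategy of DeVos--Mohar and Chen--Chen: the closed-case identity $\sum_x\Phi(x)=|V|-|E|+|F|=\chi$, the bound $d_x\leq 6$ at nonnegatively curved vertices, the universal lower bound $\varepsilon_0>0$ on positive curvature values (a Sylvester-type estimate, visible in the paper's own table, where the smallest positive value is $1/1722$, attained by the pattern $(3,7,41)$), and an exhaustion by finite subcomplexes with boundary corrections. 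Once the exhaustion bound is in hand, the ``moreover'' clause does follow from $\varepsilon_0$ exactly as you say.

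However, as written there are two genuine gaps, precisely at the points you flag as obstacles but do not resolve. The first is a circularity in the order of your argument: you propose to bound the total positive curvature by applying the boundary Gauss--Bonnet identity along truncation curves ``lying in the flat region around the ends,'' but flatness near infinity is only available \emph{after} the finiteness of $\{x:\Phi(x)\neq 0\}$ has been established, which in your scheme is itself deduced from the bound on total positive curvature. Before that step, infinitely many positive-curvature vertices could accumulate toward infinity, and nothing forces the discrete turning of an arbitrary truncation curve to be nonnegative --- boundary vertices of a finite subcomplex can be arbitrarily concave. Producing a one-sided bound on the boundary term for \emph{some} exhaustion, with no a priori flatness, is exactly the combinatorial heart of \cite{DM}; your sketch presupposes its conclusion. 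The second gap is topological: filling in each end by a single point yields a closed $2$-manifold only if $t=N(S(G))$ is finite and each end has a neighborhood homeomorphic to a half-open annulus; ends of a noncompact surface can a priori carry infinite genus, so what you need is a Huber-type ``finite total curvature implies finitely generated topology'' statement, which again must be extracted from the same exhaustion estimates rather than asserted. A smaller correction: the inequality (\ref{GBF}) is generally strict (for a flat tiling of the plane, $\sum_x\Phi(x)=0<1=\chi(S^2)-1$), so your closing claim that the boundary deficit is ``exactly absorbed'' by the $-t$ should be weakened to the statement that each end contributes a nonnegative loss, which is all the inequality requires.
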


By the Gauss-Bonnet formula, Chen \cite{Ch} gave the topological
classification of semiplanar graphs with nonnegative curvature.
\begin{theorem}\label{Chens} Let G be an infinite semiplanar graph with nonnegative curvature everywhere and $S(G)$ be the regular polygonal surface.
Then $S(G)$ is homeomorphic to: $\mathds{R}^2$, the cylinder without
boundary or the projective plane minus one point.
\end{theorem}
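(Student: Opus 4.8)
The plan is to deduce the classification directly from the Gauss--Bonnet formula (Theorem \ref{GBFT}) together with the topological classification of closed surfaces; all of the analytic content is already packaged into Theorem \ref{GBFT}, which I am free to invoke. Since $\Sec(G)\geq 0$ means $\Phi(x)\geq 0$ for every vertex $x\in G$, the graph has \emph{no} vertex of negative curvature, so the hypothesis of Theorem \ref{GBFT} (finitely many vertices of negative curvature) holds vacuously. Applying that theorem produces a closed $2$-manifold $M$ together with $t:=N(S(G))$ distinguished points such that $S(G)$ is homeomorphic to $M$ with these $t$ points removed, and it yields the inequality
\[
0\leq\sum_{x\in G}\Phi(x)\leq\chi(M)-t,
\]
where the left-hand bound is exactly the nonnegativity hypothesis. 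As a bonus, the theorem guarantees $t<\infty$ and that $M$ is a genuine closed surface, so the bookkeeping below involves only finite data.

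The first step is to extract two integer constraints from the displayed inequality. Rearranging gives $\chi(M)\geq t$. On the other hand, $G$ is an infinite, locally finite semiplanar graph, so $S(G)$ is noncompact (a compact polygonal surface has only finitely many faces and vertices, forcing $G$ finite); a noncompact surface possesses at least one end, hence $t\geq 1$. Combining the two yields $\chi(M)\geq t\geq 1$, and in particular $\chi(M)\geq 1$. The second step invokes the classification of closed surfaces: among all closed $2$-manifolds only the sphere $S^2$, with $\chi=2$, and the real projective plane $\mathds{R}P^2$, with $\chi=1$, satisfy $\chi\geq 1$, since every orientable surface of positive genus and every nonorientable surface other than $\mathds{R}P^2$ has $\chi\leq 0$. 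Thus $M\in\{S^2,\mathds{R}P^2\}$, and I then read off the admissible values of $t$ from $\chi(M)\geq t\geq 1$. If $M=S^2$ then $t\in\{1,2\}$: removing one point gives $S^2\setminus\{p\}\cong\mathds{R}^2$, and removing two points gives the annulus $S^2\setminus\{p,q\}$, which is homeomorphic to the cylinder $S^1\times\mathds{R}$ without boundary. If $M=\mathds{R}P^2$ then $\chi=1$ forces $t=1$, and $\mathds{R}P^2$ minus one point is precisely the projective plane minus one point. These are exactly the three types in the statement, so the classification follows.

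The step I expect to carry essentially all the difficulty is the appeal to Theorem \ref{GBFT} itself: it is the Gauss--Bonnet formula of DeVos--Mohar and Chen--Chen, together with the nontrivial identification of $S(G)$ as a \emph{closed} surface punctured at finitely many points, that does the real work, and once it is granted the remaining argument is elementary. Within the remaining argument the only points requiring genuine care are the two integer inequalities: justifying $t\geq 1$ rigorously from the hypothesis that $G$ is infinite (handled above via noncompactness of $S(G)$), and verifying that the sum $\sum_{x\in G}\Phi(x)$ is a genuinely finite sum of nonnegative terms so that the Gauss--Bonnet inequality may be read as $\chi(M)\geq t$. The latter is immediate because Theorem \ref{GBFT} already asserts that $G$ has at most finitely many vertices of nonvanishing curvature. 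Everything else reduces to the standard surface classification and the three elementary puncture computations.
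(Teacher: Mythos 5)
Your proposal is correct and follows exactly the route the paper indicates for this result: it is Chen's theorem, which the paper explicitly says is obtained ``by the Gauss--Bonnet formula'' (Theorem \ref{GBFT}), and your deduction --- $\chi(M)\geq t\geq 1$, hence $M\in\{S^2,\mathds{R}P^2\}$ with the three admissible puncture counts --- is the standard argument behind it. The paper itself gives no proof (it cites \cite{Ch}), so your write-up simply supplies the details of that same approach, and they check out.
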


Let $G$ be a semiplanar graph and $x\in G$. It is straightforward
that $3\leq d_x\leq 6$ if $\Phi(x)\geq0$ and $3\leq d_x\leq5$ if
$\Phi(x)>0$. A pattern of a vertex $x$ is a vector
$(\deg(\sigma_1),\deg(\sigma_2),\cdots,\deg(\sigma_{d_x})),$  where
$\{\sigma_i\}_{i=1}^{d_x}$ are the faces incident to $x$ ordered
with $\deg(\sigma_1)\leq\deg(\sigma_2)\leq \cdots
\leq\deg(\sigma_{d_x}).$ The following table is the list of all
possible patterns of a vertex $x$ with positive curvature (see
\cite{DM,CC}).

\begin{tabular}{|lc|lr|}
\hline
Patterns&&$\Phi(x)$&\\
    \hline
 $(3,3,k)$ & $3\leq k$&$=1/6+1/k$&\\ $(3,4,k)$  & $4\leq k$  &$=1/12+1/k$&\\ $(3,5,k)$ & $5\leq k$&$=1/30+1/k$&\\
$(3,6,k)$&$6\leq k$&$=1/k$&\\$(3,7,k)$&$7\leq k\leq41$&$\geq1/1722$&\\$(3,8,k)$&$8\leq k\leq 23$&$\geq1/552$&\\
$(3,9,k)$&$9\leq k\leq 17$&$\geq1/306$&\\$(3,10,k)$&$10\leq k\leq 14$&$\geq1/210$&\\$(3,11,k)$&$11\leq k\leq 13$&$\geq1/858$&\\
$(4,4,k)$&$4\leq k$&$=1/k$&\\$(4,5,k)$&$5\leq k\leq 19$&$\geq1/380$&\\$(4,6,k)$&$6\leq k\leq 11$&$\geq1/132$&\\
$(4,7,k)$&$7\leq k\leq 9$&$\geq1/252$&\\$(5,5,k)$&$5\leq k\leq 9$&$\geq1/90$&\\$(5,6,k)$&$6\leq k\leq 7$&$\geq1/105$&\\
$(3,3,3,k)$&$3\leq k$&$=1/k$&\\$(3,3,4,k)$&$4\leq k\leq 11$&$\geq1/132$&\\$(3,3,5,k)$&$5\leq k\leq 7$&$\geq1/105$&\\
$(3,4,4,k)$&$4\leq k\leq 5$&$\geq1/30$&\\$(3,3,3,3,k)$&$3\leq k\leq5$&$\geq1/30$&\\
    \hline
\end{tabular}

\bigskip
All possible patterns of a vertex with vanishing curvature are (see
\cite{GS,CC}):
$$(3,7,42),(3,8,24),(3,9,18),(3,10,15),(3,12,12),(4,5,20),(4,6,12),$$$$(4,8,8),(5,5,10),(6,6,6),
(3,3,4,12),(3,3,6,6),(3,4,4,6),(4,4,4,4),$$\noindent$\ \ \ \ \ \ \
(3,3,3,3,6),(3,3,3,4,4),(3,3,3,3,3,3).$
\medskip

We recall a lemma in \cite{CC}.
\begin{lemma}\label{GFL} Let G be a semiplanar graph G with $\Sec (G)\geq0$ and $\sigma$ be a face of $G$ with $\deg(\sigma)\geq 43.$ Then
$$\sum_{x\in\sigma}\Phi(x)\geq1.$$
\end{lemma}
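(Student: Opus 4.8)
The plan is to reduce the inequality to a purely local, combinatorial computation at each vertex of $\sigma$, using the classification of admissible vertex patterns recorded in the two tables preceding the lemma. Write $n=\deg(\sigma)\geq 43$ and recall that $\sigma$ is a polygon with exactly $n$ incident vertices. Since $\Sec(G)\geq 0$, every vertex $x\in\sigma$ satisfies $\Phi(x)\geq 0$, so its pattern appears either among the positive-curvature patterns of the first table or among the vanishing-curvature patterns of the displayed list. I would first isolate the structural fact that drives everything: among all these admissible patterns, the only ones containing a face of degree at least $43$ are the six ``unbounded'' positive-curvature patterns $(3,3,k)$, $(3,4,k)$, $(3,5,k)$, $(3,6,k)$, $(4,4,k)$, and $(3,3,3,k)$. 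Indeed, every vanishing-curvature pattern has largest facial degree at most $42$ (attained only by $(3,7,42)$), and every remaining positive-curvature pattern in the table has its large face capped at $k\leq 41$ or smaller. This is precisely where the threshold $43$ is used: it is chosen to exceed the maximal facial degree $42$ occurring in any zero-curvature pattern, so that incidence to $\sigma$ forces strictly positive curvature of a controlled size.

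The second step pins down the pattern at a fixed vertex $x\in\sigma$. Because $\sigma$ is incident to $x$ and $\deg(\sigma)=n\geq 43$, the pattern of $x$ must be one of the six patterns above, with the role of the large face $k$ necessarily played by $\sigma$ itself, so that $k=n$ (the remaining incident faces then automatically have small degree, and in particular $x$ is incident to at most one face of degree $\geq 43$). Reading off the curvature values from the first table gives, in each of the six cases,
\begin{equation*}
\Phi(x)\in\left\{\tfrac{1}{6}+\tfrac{1}{n},\ \tfrac{1}{12}+\tfrac{1}{n},\ \tfrac{1}{30}+\tfrac{1}{n},\ \tfrac{1}{n}\right\},
\end{equation*}
where the last value $\tfrac{1}{n}$ arises from $(3,6,n)$, $(4,4,n)$, and $(3,3,3,n)$. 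In every case I obtain the uniform lower bound $\Phi(x)\geq\tfrac{1}{n}=\tfrac{1}{\deg(\sigma)}$.

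Finally, summing this pointwise estimate over all $n$ vertices incident to $\sigma$ yields
\begin{equation*}
\sum_{x\in\sigma}\Phi(x)\geq n\cdot\frac{1}{\deg(\sigma)}=\deg(\sigma)\cdot\frac{1}{\deg(\sigma)}=1,
\end{equation*}
which is the desired conclusion. The only genuine content lies in the first step, namely verifying that no admissible pattern other than the six ``unbounded'' ones can carry a face of degree $\geq 43$, so that each vertex of such a large face contributes at least $1/\deg(\sigma)$; once this enumeration is settled, the summation is immediate. I expect this pattern bookkeeping to be the main obstacle, since it requires confirming that the two tables exhaust all cases with $\Phi(x)\geq 0$ and that the threshold $43$ cleanly separates the unbounded patterns from all others.
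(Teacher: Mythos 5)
Your proposal is correct and follows exactly the paper's own argument: restrict to the six unbounded patterns $(3,3,k),(3,4,k),(3,5,k),(3,6,k),(4,4,k),(3,3,3,k)$ as the only admissible ones at a vertex incident to a face of degree $\geq 43$, observe $\Phi(x)\geq 1/k$ in each case, and sum over the $\deg(\sigma)$ vertices. Your added justification of the threshold (no vanishing-curvature pattern has a face of degree exceeding $42$, and the remaining positive-curvature patterns cap $k$ at $41$ or below) is exactly the bookkeeping the paper leaves implicit.
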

\begin{proof}For  completeness, we give the proof of the lemma. Since $\Sec (G)\geq0$ and $\deg(\sigma)\geq 43,$ the only possible patterns of the vertices incident to the face $\sigma$ are: $(3,3,k),(3,4,k),(3,5,k),(3,6,k),(4,4,k)$ and $(3,3,3,k),$ where $k=\deg(\sigma).$ In each case, we have $\Phi(x)\geq\frac{1}{k},$ for $x\in\sigma.$ Hence, we get
$$\sum_{x\in\sigma}\Phi(x)\geq1.$$
\end{proof}

Let $G=(V,E,F)$ be a semiplanar graph. We denote by
$D_G=\sup\{\deg(\sigma):\sigma \in F\}$ the maximal degree of faces
in $G$. If $G$ has nonnegative curvature everywhere, then by Theorem
\ref{GBFT}, $G$ has at most finitely many vertices with nonvanishing
curvature which implies that $D_G<\infty$.

\begin{lemma}\label{GFL1} Let $G$ be an infinite semiplanar graph with $\Sec (G)\geq0.$ Then either $D_G\leq42$, or $G$ has a unique face $\sigma$ with $\deg(\sigma)\geq43$ and has vanishing curvature elsewhere.
\end{lemma}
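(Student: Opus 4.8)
The plan is to dispose of the easy alternative first: if $D_G\leq 42$ there is nothing to prove, so assume $D_G\geq 43$. Since $D_G<\infty$ by Theorem \ref{GBFT}, the supremum is attained over integer facial degrees, so there is at least one face $\sigma$ with $\deg(\sigma)\geq 43$. The whole argument then rests on three ingredients already in hand: Lemma \ref{GFL} (giving $\sum_{x\in\sigma}\Phi(x)\geq 1$ for any face $\sigma$ of degree $\geq 43$), the Gauss--Bonnet inequality $\sum_{x\in G}\Phi(x)\leq\chi(S(G))$ of Theorem \ref{GBFT}, and the topological trichotomy of Theorem \ref{Chens}. From the latter I first record that $\chi(S(G))\leq 1$: writing $S(G)\cong M\setminus\{t\text{ points}\}$ with $\chi(S(G))=\chi(M)-t$, the plane yields $\chi(S^2)-1=1$, the cylinder yields $\chi(S^2)-2=0$, and the projective plane minus a point yields $1-1=0$.

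Next I would prove uniqueness, whose crux is the local fact that \emph{no vertex is incident to two faces of degree $\geq 43$}. Indeed, if a vertex $x$ were incident to two such faces, then bounding the remaining $d_x-2\geq 1$ incident faces by $\deg\geq 3$ gives
$$\Phi(x)\leq 1-\frac{d_x}{2}+\frac{2}{43}+\frac{d_x-2}{3}=\frac{1}{3}+\frac{2}{43}-\frac{d_x}{6}\leq\frac{1}{3}+\frac{2}{43}-\frac{1}{2}<0,$$
contradicting $\Sec(G)\geq 0$. (Equivalently, one reads off from the pattern list in the proof of Lemma \ref{GFL} that a vertex incident to a face of degree $k\geq 43$ has pattern $(3,3,k),(3,4,k),\ldots,(3,3,3,k)$, each containing exactly one large entry.) Consequently any two distinct faces $\sigma_1,\sigma_2$ of degree $\geq 43$ share no vertex; were both present, then since $\Phi\geq 0$ everywhere,
$$\sum_{x\in G}\Phi(x)\geq\sum_{x\in\sigma_1}\Phi(x)+\sum_{x\in\sigma_2}\Phi(x)\geq 1+1=2,$$
contradicting $\sum_{x\in G}\Phi(x)\leq\chi(S(G))\leq 1$. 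Hence the large face $\sigma$ is unique.

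Finally, for the vanishing statement I would chain the two estimates for the unique large face $\sigma$. Since every $\Phi(x)\geq 0$,
$$1\leq\sum_{x\in\sigma}\Phi(x)\leq\sum_{x\in G}\Phi(x)\leq\chi(S(G))\leq 1,$$
so every inequality is an equality; in particular $\sum_{x\notin\sigma}\Phi(x)=0$, and as the summands are nonnegative, $\Phi(x)=0$ for every vertex $x$ not incident to $\sigma$. This is exactly the asserted vanishing of the curvature elsewhere. (As a byproduct the chain forces $\chi(S(G))=1$, so $S(G)$ is necessarily the planar/cap case; a face of degree $\geq 43$ is incompatible with the cylinder or projective-plane-minus-a-point cases.)

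The main obstacle --- indeed the only step requiring genuine care --- is the local claim that no vertex lies on two faces of degree $\geq 43$; everything else is a bookkeeping combination of Lemma \ref{GFL}, Gauss--Bonnet, and the Euler-characteristic computation. I would verify there that the curvature bound is strict for all admissible $d_x\geq 3$ (it decreases in $d_x$), and that a face of degree $k$ contributes exactly $k$ distinct incident vertices to its curvature sum, consistent with the conventions underlying the pattern tables, so that the disjointness argument does not secretly double-count.
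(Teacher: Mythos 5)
Your proposal is correct and follows essentially the same route as the paper: Lemma \ref{GFL} gives $\sum_{x\in\sigma}\Phi(x)\geq 1$, the Gauss--Bonnet inequality gives $\sum_{x\in G}\Phi(x)\leq 1$ (the paper gets this directly from $\chi(M)\leq 2$ and $t\geq 1$ rather than via Theorem \ref{Chens}), and nonnegativity of $\Phi$ forces equality and hence $\Phi\equiv 0$ off $\sigma$. Your only addition is to make the uniqueness step explicit via the computation showing no vertex can be incident to two faces of degree $\geq 43$; the paper leaves this implicit in its pattern list, so this is a welcome but not essentially different elaboration.
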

\begin{proof} If $G$ has a face $\sigma$ with $\deg(\sigma)\geq43,$ then by Lemma \ref{GFL} $$\sum_{x\in\sigma}\Phi(x)\geq1.$$ Since $G$ is an infinite graph with $\Sec (G)\geq0,$ by the Gauss-Bonnet formula (\ref{GBF}), we have
$$\sum_{x\in G}\Phi(x)\leq1,$$ because $\chi(M)\leq2$ and $t\geq1.$ Hence $\sum_{x\in\sigma}\Phi(x)=1$ and $\Phi(y)=0$ for any $y\not\in\sigma.$ Furthermore, the only possible patterns of the vertices incident to $\sigma$ are: $(3,6,k),(4,4,k),(3,3,3,k),$ because the other three patterns $(3,3,k),(3,4,k),$ $(3,5,k)$ have curvature strictly larger than $\frac{1}{k},$ where $k=\deg(\sigma).$
\end{proof}

Let $\mathcal{G}$ denote the set of semiplanar graphs. We define a
graph operation on $\mathcal{G}$,
$P:\mathcal{G}\rightarrow\mathcal{G}.$ For any $G\in \mathcal{G},$
we choose a (possibly infinite) subcollection of hexagonal faces of
G, add new vertices at the barycenters of the hexagons, and join
them to the vertices of the hexagons by new edges. In such a way, we
obtain a new semiplanar graph, denoted by $P(G)$, which replaces
each hexagon chosen in $G$ by six triangles. We note that
$P:\mathcal{G}\rightarrow\mathcal{G}$ is a multivalued map depending
on which subcollection of hexagons we chosen. The inverse map of
$P$, denoted by $P^{-1}:\mathcal{G}\rightarrow\mathcal{G},$ is
defined as a semiplanar graph $P^{-1}(G)$ obtained from replacing
couples of six triangles incident to a common vertex of pattern
$(3,3,3,3,3,3)$ in $G$ by a hexagon (we require that the hexagons do
not overlap). It is easy to see that $S(P(G))$ and $S(P^{-1}(G))$
are isometric to $S(G)$ which implies that the graph operations $P$
and $P^{-1}$ preserve the curvature condition, i.e. $\Sec\
S(P(G))\geq0$ (or $\Sec\ S(P^{-1}(G))\geq0$) $\Longleftrightarrow$
$\Sec\ S(G)\geq0$.

We investigate the combinatorial structure of the semiplanar graph
$G$ with nonnegative curvature and large face degree, i.e. $D_G\geq
43.$ Lemma \ref{GFL1} shows that there is a unique large face
$\sigma$ such that $\deg(\sigma)=D_G=k\geq43$ and the only patterns
of vertices of $\sigma$ are: $(3,6,k),(4,4,k)$ and $(3,3,3,k).$
Without loss of generality, by the graph operation $P$, it suffices
to assume that the semiplanar graph $G$ has no hexagonal faces. It
is easy to show that if one of the vertices of $\sigma$ is of
pattern $(4,4,k)$ (or $(3,3,3,k)$), the other vertices incident to
$\sigma$ are of the same pattern. We denote by $L_1$ the set of
faces attached to the large face $\sigma$, which are of the same
type (triangle or square) and for which the boundary of $\sigma\cup
L_1$ has the same number of edges as the boundary of $\sigma.$ By
Lemma \ref{GFL1}, $G$ has vanishing curvature except at the vertices
incident to $\sigma$. Hence, $\sigma\cup L_1$ is in the same
situation as $\sigma$. To continue the process, we denote by $L_2$
the set of faces attached to $\sigma\cup L_1$ which are of the same
type (triangle or square). In this way, we obtain an infinite
sequence of sets of faces, $\sigma, L_1, L_2,\cdots, L_m,\cdots,$
where $L_m$ are the sets of faces of the same type (triangle or
square) for $m\geq1$. $L_m$ and $L_n$ ($m\not=n$) may be different
since they are independent.

\begin{theorem}\label{LFC} Let $G$ be a semiplanar graph with $\Sec (G)\geq0$ and $D_G\geq43,$ and let $\sigma$ be the face of maximal
  degree. Then either $G$ has no hexagons, constructed from a sequence of sets of
  faces, $\sigma, L_1, L_2,\cdots, L_m,\cdots,$ where $L_m$ are the
  sets of faces of the same type (triangle or square), denoted by $S(G)=\sigma\cup\bigcup_{m=1}^{\infty}L_m$, or $G$ has hexagons, i.e. $G=P^{-1}(G')$ where $G'$ has no hexagons.
\end{theorem}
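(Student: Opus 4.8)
The plan is to strip off hexagons with the operation $P$, reduce to the hexagon-free situation, and then show that outside the unique large face $\sigma$ the surface is a flat half-cylinder tiled by concentric rings of a single face type, with these rings exhausting $S(G)$. By Lemma \ref{GFL1}, $G$ has a unique face $\sigma$ with $\deg(\sigma)=D_G=k\geq 43$, all of the curvature is carried by the vertices incident to $\sigma$ (with $\sum_{x\in\sigma}\Phi(x)=1$), and every other vertex is flat; moreover the only admissible patterns on $\partial\sigma$ are $(3,6,k)$, $(4,4,k)$ and $(3,3,3,k)$. Since $S(P(G))$ and $S(P^{-1}(G))$ are isometric to $S(G)$ and both operations preserve the curvature condition as well as the large face (as $k>6$, $\sigma$ is never a hexagon), it suffices to treat the hexagon-free case: subdividing every hexagon sends each pattern $(3,6,k)$ to $(3,3,3,k)$, so the boundary patterns become $(4,4,k)$ or $(3,3,3,k)$. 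The second alternative of the theorem is then immediate by taking $G'=P(G)$, obtained by subdividing all hexagons, so that $G'$ is hexagon-free and $G=P^{-1}(G')$.

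Next I would build the rings inductively. For the first ring, two consecutive vertices of $\partial\sigma$ are joined by an edge of $\sigma$, and the face across that edge is incident to both; if it is a square it forces the pattern $(4,4,k)$ at both endpoints, if a triangle it forces $(3,3,3,k)$, and this propagates around the boundary cycle, so all of $\partial\sigma$ shares one pattern. Letting $L_1$ be the faces attached across the edges of $\sigma$ (a ring of $k$ squares, or an antiprism-type layer of $2k$ triangles), the vertex degrees ($3$ for squares, $4$ for triangles) together with the radial edges shared by consecutive faces show that $\sigma\cup L_1$ is again bounded by a $k$-cycle all of whose vertices are flat, i.e.\ $\sigma\cup L_1$ is in the same situation as $\sigma$. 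Iterating, suppose $\Omega_m=\sigma\cup\bigcup_{j\leq m}L_j$ is bounded by a $k$-cycle of flat vertices, at each of which the interior faces contribute angle $\pi$; flatness (total angle $2\pi$) together with the hexagon-free list of admissible flat patterns then leaves only two completions for the exterior faces, all squares or all triangles, and the same propagation argument makes the next ring $L_{m+1}$ homogeneous with $\Omega_{m+1}$ again bounded by a $k$-cycle of flat vertices. For the exhaustion, note that if a face $f\notin\Omega_m$ is edge-adjacent to a face of $\Omega_m$ then the shared edge must lie on $\partial\Omega_m$, whence $f\in L_{m+1}$; since the dual graph of $S(G)$ is connected, induction on the dual distance from $\sigma$ shows that every face lies in some $\Omega_m$, and therefore $S(G)=\sigma\cup\bigcup_{m\geq1}L_m$.

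The main obstacle is the homogeneity-and-length step: that the exterior faces along each boundary cycle are all of one type and that the cycle length remains exactly $k$. The angle balance alone does not suffice, since at a single flat vertex both a square continuation and a triangle continuation are compatible with total angle $2\pi$; what one must exploit is that fixing the face type across one boundary edge pins down the degree and radial structure at the adjacent vertex, which in turn forces the type across the following edge, so that global homogeneity comes from closing up around the cycle. The length invariance then reduces to a direct local computation: a square ring contributes a parallel $k$-cycle, while a triangle ring contributes an antiprism layer, again with a $k$-cycle boundary. In the nonorientable case permitted by Theorem \ref{Chens}, the rings close up with a single orientation-reversing identification, which affects neither the propagation argument nor the exhaustion.
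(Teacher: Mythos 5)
Your proposal is correct and follows essentially the same route as the paper, which establishes this theorem by the informal construction in the paragraph preceding its statement: reduce to the hexagon-free case via the operation $P$, use Lemma \ref{GFL1} to confine all curvature to $\partial\sigma$, propagate the pattern $(4,4,k)$ or $(3,3,3,k)$ around the boundary cycle, and grow the rings $L_m$ inductively. You in fact supply the details the paper leaves as ``it is easy to show'' (the propagation of homogeneity across boundary edges and the exhaustion of $S(G)$ by the rings), and your observation that the nonorientable case cannot occur here is consistent with Lemma \ref{GFL1} forcing positive total curvature on $\sigma$.
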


\section{Metric Classification of Semiplanar Graphs with Nonnegative Curvature}
In this section, we prove that any regular polygonal surface is a
complete geodesic space and the combinatorial curvature definition
is consistent with the sectional curvature in the sense of
Alexandrov. We then obtain the metric classification of semiplanar
graphs with nonnegative curvature.

Let $G$ be a semiplanar graph and $S(G)$ be the corresponding
regular polygonal surface. Denote by $G_1$ the $1$-dimensional
simplicial complex with the metric, denoted by $d^{G_1}$, by
assigning each edge the length one. As a subset of $S(G)$, $G_1$ has
another metric, denoted by $d$, which is the restriction of the
intrinsic metric $d$ of $S(G)$ to $G_1$. The following lemma says
that they are bi-Lipschitz equivalent. We note that
$d^G(x,y)=d^{G_1}(x,y),$ for any $x,y\in G.$

\begin{lemma}\label{LEM1} Let $G$ be a semiplanar graph and $S(G)$ be the regular polygonal surface of $G$. Then there exists a constant $C$ such that for any $x,y\in G_1,$
\begin{equation}\label{LEM2}Cd^{G_1}(x,y)\leq d(x,y)\leq d^{G_1}(x,y).\end{equation}
\end{lemma}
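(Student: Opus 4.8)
The upper bound $d(x,y)\le d^{G_1}(x,y)$ is immediate: since $G_1\subset S(G)$ and each edge carries length $1$ in both structures, every curve lying in $G_1$ has the same length measured in $G_1$ and in $S(G)$, so passing to the infimum over the smaller class of curves contained in $G_1$ can only increase the distance. The content of the lemma is the reverse bound, and the plan is to prove the equivalent statement $d^{G_1}(x,y)\le C(D)\,d(x,y)$, where $C(D)$ depends only on the finite maximal facial degree $D=D_G$ (cf.\ \cite{CC}); setting $C=1/C(D)$ then yields (\ref{LEM2}). I would first establish this for vertices $x,y\in G$, where $d^{G_1}=d^G$, and afterwards extend it to arbitrary points of $G_1$.

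First I would record the local structure of a shortest path. Away from its vertices, $S(G)$ is a flat surface: the interior of each face is a piece of a regular polygon and adjacent faces are glued isometrically along an edge, so there is no cone singularity along edge interiors. Hence a geodesic $\gamma$ realizing $d(x,y)$ is a concatenation of straight Euclidean segments, one inside each face it meets; it crosses edges transversally and can pass through a vertex only where the total angle equals $2\pi$, since at a vertex with $\Phi>0$ the cone angle is $<2\pi$ and no shortest path passes through it. As $\gamma$ has finite length it meets only finitely many faces $\sigma_1,\dots,\sigma_m$, giving chords of lengths $\ell_1,\dots,\ell_m$ with $\sum_j\ell_j=L(\gamma)=d(x,y)$. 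The core of the argument is then a planar estimate for a single regular $n$-gon $P$ of side $1$ with $3\le n\le D$: for any two boundary points $a,b$, if $v_a,v_b$ denote vertices of $P$ at boundary distance $\le 1/2$ from $a,b$, then the shorter boundary arc of $P$ from $v_a$ to $v_b$ has length at most $C(D)\,|ab|$. Granting this, I replace each chord of $\gamma$ inside $\sigma_j$ by the boundary arc joining the rounded vertices, making the rounding at each shared edge consistent so that the arcs concatenate into a walk in $G_1$ from $x$ to $y$; its length is $\sum_j(\mathrm{arc}_j)\le C(D)\sum_j\ell_j=C(D)\,d(x,y)$, whence $d^{G_1}(x,y)\le C(D)\,d(x,y)$.

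The main obstacle is precisely this planar estimate, where the danger is a chord that merely clips a corner: then $|ab|$ is tiny and an \emph{additive} rounding error, summed over the $m$ faces, could swamp $d(x,y)$. The resolution is a purely multiplicative bound, obtained by a short case analysis. If $v_a=v_b$ the arc has length $0$ and there is nothing to prove. If $v_a\ne v_b$, the entry and exit edges are distinct (a straight chord of a convex polygon cannot meet one side twice), and $a,b$ are forced apart. When the two edges are adjacent, meeting at a vertex $w$, one uses that the interior angle is $\pi-2\pi/n\ge \pi/3$: rounding $a,b$ to \emph{different} vertices means at least one of them lies at distance $>1/2$ from $w$ along its edge, and the law of cosines then gives $|ab|\ge c_1>0$ for an absolute $c_1$, while the arc is at most $2$. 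When the two edges are non-adjacent, $|ab|$ is at least the (positive) minimal distance between non-adjacent sides of $P$, while the arc never exceeds half the perimeter $n/2\le D/2$. In every case the ratio $(\mathrm{arc})/|ab|$ is bounded by a constant $C(D)$.

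Finally, to pass from vertices to arbitrary $x,y\in G_1$, I note that two points on the same edge satisfy $d=d^{G_1}$, because that edge is a shortest segment of the flat polygon on either side of it and hence globally length-minimizing in $S(G)$. In general I round $x,y$ to nearest vertices $x',y'$, which satisfy $d^{G_1}(x,x'),d^{G_1}(y,y')\le 1/2$ and $d(x,x'),d(y,y')\le 1/2$, apply the vertex estimate to $x',y'$, and combine it with the triangle inequality, enlarging $C(D)$ if necessary. This completes the lower bound and, together with $d\le d^{G_1}$, establishes (\ref{LEM2}).
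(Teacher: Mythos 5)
Your overall strategy---decompose a shortest path in $S(G)$ into straight chords, one per face, replace each chord by a boundary path of comparable length, and concatenate to obtain a competitor in $G_1$---is exactly the paper's (Lemma \ref{EDC1} combined with the proof of Lemma \ref{LEM1}). The genuine gap is in your single-polygon estimate, and it matters for the statement as written. In the non-adjacent-edges case you bound the arc by half the perimeter, $n/2$, and the chord $|ab|$ from below by the minimal distance between non-adjacent sides, which does not grow with $n$ (for second-neighbour sides it equals the side length $1$); this yields only $\mathrm{arc}\leq C\, n\,|ab|$, a constant degrading linearly in the facial degree. You then absorb this into $C(D)$ by citing \cite{CC} for $D_G<\infty$, but that finiteness theorem requires nonnegative (or finite total absolute) curvature, a hypothesis Lemma \ref{LEM1} does not carry: a general semiplanar graph may have faces of unbounded degree, and then your argument produces no constant at all. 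The paper's Lemma \ref{EDC1} is engineered precisely to avoid this: when the shorter boundary path contains at least three full edges, it compares both the chord and the arc with the corresponding chord and arc of the circumscribed circle of the $n$-gon, obtaining $d\geq C l_1$ with $C$ \emph{independent of} $n$. That uniformity is not cosmetic; it is what makes the constant in Lemma \ref{LEM1} absolute, which is then used in Theorem \ref{CMS1} to extract a single $\epsilon_0>0$ valid for every face when proving completeness of $S(G)$.

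A smaller point: the paper routes its $G_1$-path through the actual crossing points $\gamma(y_i)$ (note that $d^{G_1}$ is defined on the whole $1$-complex, not only on vertices), so no rounding to vertices is needed and the corner-clipping problem you worry about never arises. Your rounding scheme forces you to exclude chords with both endpoints on one edge, which you dismiss because ``a straight chord of a convex polygon cannot meet one side twice''; it can, by lying along that side, and then two points straddling the midpoint round to different vertices while $|ab|$ is arbitrarily small. This is rescued only by observing that a geodesic meeting an edge in two interior points must contain the segment of the edge between them (so such pieces already lie in $G_1$ and cost nothing); if you keep the rounding, that observation must be made explicit. With a chord-versus-arc estimate uniform in $n$ and the path routed through the crossing points themselves, both issues disappear and your argument coincides with the paper's.
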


To prove the lemma, we need the following lemma in  Euclidean
geometry.

\begin{lemma}\label{EDC1} Let $\triangle_n\subset \mathds{R}^2$ be a regular $n$-polygon of side length one ($n\geq3$). A straight line $L$ intersects the boundary of $\triangle_n$ at two points, $A$ and $B$. Denote by $|AB|=d$ the length of the segment $AB$, by $l_1,l_2$ the length of the two paths $P_1, P_2$ on the boundary of $\triangle_n$ joining $A$ and $B$. Then we have
\begin{equation}\label{EDC2}C\min\{l_1,l_2\}\leq d\leq \min\{l_1,l_2\},\end{equation} where the constant $C$ does not depend on $n.$
\end{lemma}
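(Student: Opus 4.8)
The upper bound $d\le\min\{l_1,l_2\}$ is immediate: each $P_i$ is a curve in the plane joining $A$ and $B$, and the straight segment $AB$ is the shortest such curve, so $d=|AB|\le L(P_i)=l_i$. For the lower bound it suffices to produce a constant $C>0$, independent of $n$, with $d\ge C\,l$, where $l:=\min\{l_1,l_2\}$ and $P$ denotes the corresponding shorter boundary arc. The plan rests on one structural observation about regular polygons: $P$ is a concatenation of consecutive edges of $\triangle_n$, all of length exactly one except possibly the two extreme ones (truncated by $A$ and $B$), and the directions of consecutive edges differ by exactly the exterior angle $\beta:=2\pi/n$. Hence, orienting the edges from $A$ to $B$, the chord is the sum of edge vectors
\[
B-A=\sum_{j=0}^{m}s_j\,e^{i(\theta_0+j\beta)},\qquad s_j\in(0,1],
\]
with $s_j=1$ for the $m-1$ interior edges and $s_0,s_m\in(0,1]$ the truncated end edges; here $m$ is essentially the number of vertices of $\triangle_n$ lying on $P$, the total turning of $P$ is $\phi=m\beta$, and $l=\sum_j s_j\in[m-1,m+1]$. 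Thus $B-A$ is, up to the two end terms, a geometric sum, i.e. a discretized circular arc.

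First I would bound the turning of the shorter arc. Counting the vertices on the two arcs gives $m\le (n+1)/2$, so $\phi=m\beta\le\pi+\pi/n$; in particular $\phi/2$ stays in a range where $\cos$ is bounded below, away from the degenerate regime. I would then project onto the mean tangent direction $e^{i(\theta_0+\phi/2)}$, obtaining
\[
d\ge\Big\langle B-A,\;e^{i(\theta_0+\phi/2)}\Big\rangle=\sum_{j=0}^{m}s_j\cos\!\big((j-\tfrac m2)\beta\big).
\]
Every summand is the cosine of an angle in $[-\phi/2,\phi/2]$, and since the interior $s_j$ all equal one, the right-hand side is a Riemann-type sum for $\beta^{-1}\int_{-\phi/2}^{\phi/2}\cos u\,du=2\beta^{-1}\sin(\phi/2)$. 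Comparing the sum with this integral, the error being controlled by the two truncated end edges (each contributing at most one), gives $d\gtrsim \beta^{-1}\sin(\phi/2)$, while $l\le m+1\approx\phi/\beta$; using $\tfrac2\pi x\le\sin x\le x$ on $[0,\pi/2]$ this yields $d\ge C\,l$ with an explicit absolute constant $C$ (morally $C=2/\pi$, the value for the limiting circle).

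The conceptual heart of the matter, and the only genuine obstacle, is to rule out \emph{folding}: an abstract convex arc of total turning close to $\pi$ can have arbitrarily small chord (think of a sharp wedge), and indeed the mean-direction projection degenerates precisely as $\phi\to\pi$. What rescues a uniform constant is exactly the rigidity of the regular polygon: because every edge has length one and every exterior angle equals $2\pi/n$, the turning of $P$ is spread at a constant rate, all interior edges are full, and the direction increments are uniform, so $P$ cannot concentrate its turning and is quantitatively a discretized circular arc. The remaining points are routine: the vertex count bounding $\phi$, the control of the two truncated end edges, and a separate check of the finitely many small values $n=3,4,5,6$ (where $\beta$ is large and $P$ contains only a bounded number of vertices, so the estimate follows from an elementary computation via the law of cosines).
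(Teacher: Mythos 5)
Your route is genuinely different from the paper's. The paper runs an explicit case analysis: no full edge on the shorter arc (law of cosines in a wedge), $n\le 6$ (project onto the line containing a full edge), and for $n\ge 7$ either at least three full edges (compare the chord $d$ with the chord and arc of the circumscribed circle) or exactly one or two full edges (extend the two end edges to a triangle and use the law of cosines again). Your edge-vector sum projected onto the mean tangent direction is a unified, discretized version of the paper's circumscribed-circle case, and the structural inputs are identical: the shorter arc carries at most $(n+1)/2$ vertices, hence total turning at most $\pi+\pi/n$, spread at the uniform rate $2\pi/n$.

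However, the key quantitative step as you state it has a gap, and it sits exactly at the cases the paper carves out separately. Comparing $\sum_j s_j\cos\big((j-\tfrac m2)\beta\big)$ with $2\beta^{-1}\sin(\phi/2)$ up to an additive error ``at most one'' per truncated end edge gives $d\ge \tfrac1\pi m-O(1)$, which is vacuous when the arc contains only $m=1,2,3$ vertices --- and such configurations occur for \emph{every} $n$ (take $A,B$ close to a common vertex of a large polygon), so your proposed hand check of $n=3,\dots,6$ does not cover them; moreover $\sin x\ge\tfrac2\pi x$ is invoked at $x=\phi/2$, which can exceed $\pi/2$. The repair stays inside your framework but must be said explicitly: if $\phi\le\pi-\epsilon$, every summand is at least $\cos(\phi/2)\ge\sin(\epsilon/2)>0$, so the termwise bound $d\ge\cos(\phi/2)\sum_j s_j=\cos(\phi/2)\,l$ gives the claim with no error term at all (this subsumes every small-$m$ configuration); if $\phi>\pi-\epsilon$, then $m\ge cn$, the exact identity $\sum_{j=0}^m\cos\big((j-\tfrac m2)\beta\big)=\sin((m+1)\beta/2)/\sin(\beta/2)$ is bounded below by $c'n\ge c''l$ for $n\ge 7$, and the loss from shortening the two end edges is at most $2|\cos(\phi/2)|\le 2\sin(\epsilon/2)+2\sin(\pi/(2n))$, which is now harmless rather than of order one. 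With that two-regime split, plus the finite check for $3\le n\le 6$ where $\phi$ can exceed $\pi$ appreciably, your argument closes and is arguably cleaner than the paper's.
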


\begin{proof} It suffices to prove that $d\geq
  C\min\{l_1,l_2\}$. Without loss of generality, we may assume
  $l_1\leq l_2.$ It is easy to prove the lemma for $n=3,$ so we consider $n\geq 4.$ If the shorter path $P_1$ contains no full edges of
  $\triangle_n$, i.e. $A$ and $B$ are on adjacent edges, then $P_1$
  and $AB$ form a triangle. Denote by $a, b$ the lengths of the two sides in $P_1$ and by $\alpha$ the angle opposite to $AB$. Then we have $\alpha=\frac{(n-2)\pi}{n}$ and $l_1=a+b.$ By the cosine rule, we obtain that
$$d\geq a-b\cos\alpha,$$ $$d\geq b-a\cos\alpha.$$ Then it follows that
$$2d\geq (a+b)(1-\cos\alpha)\geq(a+b)(1-\cos\frac{\pi}{3})=\frac{1}{2}l_1.$$ Hence \begin{equation}\label{EDL1}d\geq\frac{1}{4}l_1.\end{equation}

If $P_1$ contains at least one full edge, we consider the following
cases.

\textit{Case 1.} $n\leq 6.$

We choose one full edge in $P_1$ and extend it to a straight line,
then project the path $P_1$ onto the line. It is easy to show that
$$d\geq|Proj P_1|\geq 1,$$ where $Proj P_1$ is the projection of the path $P_1$. Since $n\leq 6,$ we have $l_1\leq3$ and \begin{equation}\label{EDL2}d\geq 1\geq\frac{l_1}{3}.\end{equation}

\textit{Case 2.} $n\geq 7.$

Denote by $l$ the number of full edges contained in $P_1$. We draw
the circumscribed circle of $\triangle_n$, denoted by $C_n$, with
center $O$ of radius $R_n,$ where $2R_n\sin\frac{\pi}{n}=1.$ Let the
straight line $L$ (passing through $A$ and $B$) intersect the circle
$C_n$ at $C$ and $D$ ($C$ is close to $A$). Denote by $d'$ the
length of the segment $CD$, by $\theta$ the angle of $\measuredangle
COD$ and by $l'$ the length of the arc $\wideparen{CD}.$

\textit{Case 2.1.} $l\geq3.$

On one hand, by $l\geq3,$ we have $\theta\geq
l\frac{2\pi}{n}\geq3\frac{2\pi}{n}.$ Hence,
$$d'=2R_n\sin\frac{\theta}{2}=\frac{\sin\frac{\theta}{2}}{\sin\frac{\pi}{n}}\geq\frac{\sin3\frac{\pi}{n}}{\sin\frac{\pi}{n}}=3-4\sin^2\frac{\pi}{n}\geq
3-4\sin^2\frac{\pi}{7}\geq2.24.$$ On the other hand, by $|AC|\leq1$
and $|BD|\leq1$, we obtain that
$$d'-d\leq|AC|+|BD|\leq 2.$$ Then we have
\begin{equation}\label{EDL11}\frac{d}{d'}\geq1-\frac{2}{d'}\geq1-\frac{2}{2.24}=C.\end{equation}
Since $d'=2R_n\sin\frac{\theta}{2}$ and $l'=R_n\theta,$ we have
\begin{equation}\label{EDL12}\frac{d'}{l'}=\frac{2\sin\frac{\theta}{2}}{\theta}\geq\frac{2\cdot\frac{2}{\pi}\cdot\frac{\theta}{2}}{\theta}=\frac{2}{\pi}.\end{equation}

In addition, \begin{equation}\label{EDL13}l'\geq l\geq
l_1-2\geq\frac{l_1}{3},\end{equation} where the last inequality
follows from $l_1\geq l\geq3.$

Hence, by (\ref{EDL11}), (\ref{EDL12}) and (\ref{EDL13}), we have

\begin{equation}\label{EDL3}d\geq Cl_1.\end{equation}

\textit{Case 2.2.} $l=1.$

We denote by $EF$ the full edge contained in $P_1$ ($E$ is close to
$A$) and extend $AE$ and $BF$ to intersect at the point $H$. It is
easy to calculate the angle $\measuredangle
EHF=\pi-\frac{4\pi}{n}\geq\pi-\frac{4\pi}{7}.$ By an argument
similar to the beginning of the proof, we obtain that
\begin{equation}\label{EDL4}d=|AB|\geq\frac{1}{2}(|AH|+|BH|)(1-\cos(\pi-\frac{4\pi}{7}))\geq C(|AE|+|EF|+|FB|)=Cl_1,\end{equation} where the last inequality follows from the triangle inequality.

\textit{Case 2.3.} $l=2.$

We denote by $EF$ and $FH$ the full edges contained in $P_1$ ($E$ is
close to $A$) and extend $AE$ and $BH$ to intersect at the point
$K$. Easy calculation shows that $\measuredangle
EKH=\pi-\frac{6\pi}{n}\geq\pi-\frac{6\pi}{7}.$ By the same argument,
we get
\begin{equation}\label{EDL5}d=|AB|\geq\frac{1}{2}(|AK|+|BK|)(1-\cos(\pi-\frac{6\pi}{7}))\geq Cl_1.\end{equation}

Hence, by (\ref{EDL1}), (\ref{EDL2}), (\ref{EDL3}), (\ref{EDL4}) and
(\ref{EDL5}), we obtain that $$d\geq Cl_1,$$ where $C$ is an
absolute constant. Then the lemma follows.
\end{proof}

Now we prove Lemma \ref{LEM1}.
\begin{proof}[Proof of Lemma \ref{LEM1}] For any $x, y\in G_1,$ it is obvious that $d(x,y)\leq d^{G_1}(x,y).$ Hence it suffices to show the inequality in the opposite direction.
Let $\gamma:[a,b]\rightarrow S(G)$ be a geodesic joining $x$ and
$y$. By the local finiteness assumption of the graph $G$, there
exist finitely many faces that cover the geodesic $\gamma.$ There is
a partition of $[a,b]$, $\{y_i\}_{i=0}^N,$ where
$a=y_0<y_1<\cdots<y_N=b,$ such that $\gamma|_{[y_{i-1},y_{i}]}$ is a
segment on the face $\sigma_i$ and $\gamma(y_{i-1}), \gamma(y_i)$
are on the boundary of $\sigma_i,$ for $1\leq i\leq N.$  For each
$1\leq i\leq N$, we choose the shorter path, denoted by $l_i$, on
the boundary of the face $\sigma_i$ which joins $\gamma(y_{i-1})$
and $\gamma(y_i).$ By Lemma \ref{EDC1}, we get
$$CL(l_i)\leq d(\gamma(y_{i-1}), \gamma(y_i))\leq L(l_i),$$ where $L(l_i)$ is the length of $l_i.$ Connecting $l_i$, we obtain a path $l$ in $G_1$ joining $x$ and $y$. Then we have
$$L(l)=\sum_{i=1}^NL(l_i)\leq\frac{1}{C}\sum_{i=1}^Nd(\gamma(y_{i-1}),\gamma(y_{i}))=\frac{1}{C}d(x,y).$$
Hence, $$d^{G_1}(x,y)\leq L(l)\leq\frac{1}{C}d(x,y).$$

\end{proof}

\begin{theorem}\label{CMS1} Let $G=(V,E,F)$ be a semiplanar graph and $S(G)$ be the regular polygonal surface. Then $(S(G),d)$ is a complete metric space.
\end{theorem}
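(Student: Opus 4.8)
The plan is to deduce completeness from the stronger statement that $(S(G),d)$ is \emph{proper}, i.e.\ that every closed ball $\overline{B_R^{S(G)}(p)}$ is compact; once this is known, completeness is automatic, since a Cauchy sequence is bounded, hence contained in a compact closed ball, and a Cauchy sequence with a convergent subsequence converges. Everything therefore reduces to the geometric claim that \emph{for every $p\in S(G)$ and every $R>0$, the ball $B_R^{S(G)}(p)$ meets only finitely many faces of $G$}. Granting this, $\overline{B_R^{S(G)}(p)}$ is a closed subset of a finite union of closed faces, each of which is the continuous image of a compact regular polygon and hence compact; a closed subset of a compact set in the Hausdorff space $S(G)$ is itself compact.

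To prove the claim it is convenient to first center the ball at a vertex $v\in G$. The crucial observation, which I expect to be the heart of the argument, is a \emph{boundary-crossing estimate}: if a face $\sigma$ meets $B_R^{S(G)}(v)$ but is not incident to $v$, then $\sigma$ possesses a vertex within intrinsic distance $R+\tfrac32$ of $v$. Indeed, choose $z\in\sigma$ with $d(v,z)\le R$ and a curve $\gamma$ from $v$ to $z$ of length at most $R+1$. Since $v\notin\sigma$ and $\sigma$ is closed, $\gamma$ must first meet $\sigma$ at a point $y$ lying on $\partial\sigma$, and the initial subarc gives $d(v,y)\le L(\gamma)\le R+1$. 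As $y$ lies on some edge of $\sigma$, it is within distance $\tfrac12$ of an endpoint $u$ of that edge, which is a vertex of $\sigma$; hence $d(v,u)\le d(v,y)+\tfrac12\le R+\tfrac32$. (If instead $\gamma$ never leaves a single face, then $v\in\sigma$, so $\sigma$ is incident to $v$.) The essential point is that no matter how large the degree of $\sigma$ may be, any curve reaching it from outside is forced to cross its $1$-skeleton near a vertex, so a bounded path cannot ``hide'' inside arbitrarily many large faces; this is what prevents $D_G$ from entering and is the only place where a naive estimate could fail.

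With this estimate the counting is routine. By Lemma~\ref{LEM1} the metrics $d$ and $d^{G_1}$ are bi-Lipschitz on $G_1$, so $d(v,u)\le R+\tfrac32$ forces $d^{G_1}(v,u)=d^{G}(v,u)\le (R+\tfrac32)/C$; since $G$ is locally finite, only finitely many vertices $u$ satisfy this, and each is incident to at most $d_u<\infty$ faces. Together with the at most $d_v$ faces incident to $v$ itself, this shows that only finitely many faces meet $B_R^{S(G)}(v)$, proving the claim for vertex-centered balls. For an arbitrary center $p$, pick a face $\sigma_0\ni p$ and a vertex $v_0$ of $\sigma_0$; since $\sigma_0$ is a single regular polygon of finite degree, $d(p,v_0)\le\diam(\sigma_0)=:D_0<\infty$, whence $B_R^{S(G)}(p)\subseteq B_{R+D_0}^{S(G)}(v_0)$, and the latter meets only finitely many faces. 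This establishes the claim in full, hence properness and therefore completeness. Finiteness and positivity of $d$, needed for it to be a genuine metric, follow from the connectedness of $S$ together with the local Euclidean structure of the faces and are routine; the substantive content is precisely the finiteness of faces in a ball, which the boundary-crossing estimate supplies.
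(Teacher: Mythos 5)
Your proof is correct, but it takes a genuinely different route from the paper's. The paper argues directly about the completion $\overline{S(G)}$: using the star $Q=\bigcup\{\tau\in F:\tau\cap\sigma\neq\emptyset\}$ of a face $\sigma$, it notes $d^{G_1}(\partial Q,\partial\sigma)\geq 1$, invokes Lemma \ref{LEM1} to get a uniform $\epsilon_0>0$ with $(\sigma)_{\epsilon_0}\subset Q\subset S(G)$ for every face, and concludes that the completion adds no new points. You instead establish the stronger Heine--Borel property (properness) by counting: your boundary-crossing estimate shows that any face meeting $B_R^{S(G)}(v)$ has a vertex within $d$-distance $R+\tfrac32$ of $v$ (the key point, correctly identified, being that $\partial\sigma$ consists of unit edges, so any entry point into a face of arbitrarily large degree is within $\tfrac12$ of a vertex), and then Lemma \ref{LEM1} plus local finiteness bounds the number of such vertices and hence of such faces. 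Both arguments hinge on the lower bound $Cd^{G_1}\leq d$ of Lemma \ref{LEM1} to prevent the metric from degenerating across the combinatorial structure. The paper's version is shorter and purely local (a uniform collar around each face); yours costs a little more bookkeeping but yields properness outright, which is in any case what one wants for $S(G)$ to be a geodesic space via Hopf--Rinow and is only implicit in the paper's treatment. The steps you defer as routine (finiteness and positivity of the length metric $d$, compactness of individual faces as $1$-Lipschitz images of compact polygons) are indeed routine given local finiteness.
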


\begin{proof}
We denote by $S(G)=\bigcup_{\sigma\in F}\sigma$ the regular
polygonal surface of $G$, by $\overline{S(G)}$ the completion of
$S(G)$ with respect to the metric $d$. Let $(\sigma)_{\epsilon_0}$
denote the $\epsilon_0$-neighborhood of $\sigma$ in
$\overline{S(G)}$, for $\epsilon_0>0$. To prove the theorem, it
suffices to show that there exists a constant $\epsilon_0$ such that
for any face $\sigma\in F$ we have $(\sigma)_{\epsilon_0}\subset
S(G).$

For any $\sigma\in F$, let $Q=\bigcup\{\tau\in F:
\tau\cap\sigma\neq\emptyset\}.$ By the local finiteness of $G$, $Q$
is a union of finitely many faces and the boundary of $Q$, $\partial
Q$, has finitely many edges. It is easy to see that
$d^{G_1}(\partial Q, \partial \sigma)=\inf\{d^{G_1}(x,y):
x\in\partial Q, y \in\partial \sigma\}\geq 1.$ By Lemma \ref{LEM1},
we obtain that for any $x\in\partial Q, y \in\partial \sigma,$
$$d(x,y)\geq C=2\epsilon_0,$$ where we choose $\epsilon_0=\frac{C}{2}.$ Then we have $$d(\overline{S(G)}\setminus Q,\sigma)=\inf\left\{d(x,y):x\in \overline{S(G)}\setminus Q, y\in \sigma\right\}\geq2\epsilon_0>\epsilon_0.$$ Hence, it follows that
$$(\sigma)_{\epsilon_0}\subset Q\subset S(G).$$

\end{proof}

\begin{corollary} Let $G$ be a semiplanar graph and $S(G)$ be the regular polygonal surface. Then $G$ has nonnegative (resp. nonpositive) curvature everywhere if and only if $S(G)$ is an Alexandrov space with nonnegative (resp. nonpositive) curvature.
\end{corollary}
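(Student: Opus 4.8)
The plan is to exploit the locality of Alexandrov curvature bounds together with the observation that $S(G)$ is flat away from the graph's vertices and conical near them. First I would note that $S(G)$ is a complete (Theorem \ref{CMS1}) locally compact length space, hence geodesic by the Hopf--Rinow theorem; this makes it a legitimate candidate for either curvature bound. Since both the lower and the upper Toponogov comparisons are local conditions, it suffices to verify the appropriate comparison in a small neighborhood of each point $p\in S(G)$.

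I would then distinguish two types of points. If $p$ lies in the interior of a face, a small ball about $p$ is isometric to a Euclidean disk; if $p$ lies in the relative interior of an edge, a small ball is the isometric gluing of two Euclidean half-disks along a straight diameter, again a flat disk. In both cases the neighborhood is isometric to an open subset of $\mathds{R}^2$, so it has curvature identically $0$ and satisfies every triangle comparison -- both $\geq 0$ and $\leq 0$ -- independently of $\Phi$. Thus only the vertices are relevant, and it is exactly there that the combinatorial curvature is concentrated.

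At a vertex $x$ each incident face is a flat regular polygon, and these are glued cyclically around $x$; hence a small metric ball $B_\epsilon(x)$ (with $\epsilon$ smaller than the distance from $x$ to any other vertex) is isometric to the $\epsilon$-ball about the apex of the two-dimensional Euclidean cone whose total cone angle equals the sum of the polygon angles at $x$, namely $\Sigma_x$. The equivalence therefore reduces to a single geometric fact about Euclidean cones: the cone $C_\Sigma$ of total angle $\Sigma$ has Alexandrov curvature $\geq 0$ exactly when $\Sigma\leq 2\pi$, and curvature $\leq 0$ exactly when $\Sigma\geq 2\pi$. Combined with the definitional identity $2\pi\Phi(x)=2\pi-\Sigma_x$, which gives $\Phi(x)\geq 0 \iff \Sigma_x\leq 2\pi$ and $\Phi(x)\leq 0 \iff \Sigma_x\geq 2\pi$, this yields the corollary immediately.

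The main obstacle is precisely this cone comparison, the geometric heart of the argument. For the lower bound I would realize $C_\Sigma$ with $\Sigma\leq 2\pi$ as a convex surface in $\mathds{R}^3$ -- a round circular cone when $\Sigma<2\pi$ and the flat plane when $\Sigma=2\pi$ -- and invoke Alexandrov's theorem that the intrinsic metric on a convex surface has nonnegative curvature, which covers triangles through the apex as well as those avoiding it. For the upper bound I would instead use the cone construction: the Euclidean cone over a circle of circumference $\Sigma$ is locally CAT(0) if and only if that circle is CAT(1), equivalently $\Sigma\geq 2\pi$. It remains only to check that passing from the genuine neighborhood $B_\epsilon(x)$ to the infinite model cone is harmless, but this is immediate since $B_\epsilon(x)$ is literally isometric to a truncated cone, so comparison on small triangles near $x$ is the same whether tested in $S(G)$ or in $C_{\Sigma_x}$.
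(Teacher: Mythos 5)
Your proposal is correct and follows essentially the same route as the paper: reduce to a local verification, observe that $S(G)$ is flat away from the vertices, and at each vertex identify a small ball with a Euclidean cone of total angle $\Sigma_x$, so that the comparison reduces to $\Phi(x)\geq 0 \iff \Sigma_x\leq 2\pi$ (realized in the paper, as in your argument, via a convex conic surface in $\mathds{R}^3$). You simply supply more detail than the paper does, in particular the Hopf--Rinow step and the explicit CAT(0)-cone criterion for the nonpositive case, which the paper dismisses as ``similar.''
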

\begin{proof} We prove only the case for nonnegative curvature. The proof for the case of nonpositive curvature is similar.

By Theorem \ref{CMS1}, $S(G)$ is a complete metric space. It is
obvious that $S(G)$ is a geodesic space. Suppose $G$ has nonnegative
curvature everywhere. At each point except the vertices, there is a
neighborhood which is isometric to the flat disk in $\mathds{R}^2$.
At the vertex $x\in G$, the curvature condition $\Phi(x)\geq0$ is
equivalent to $\Sigma_x\leq2\pi.$ Then there is a neighborhood of
$x$ (isometric to a conic surface in $\mathds{R}^3$) satisfying the
Toponogov triangle comparison with respect to the model space
$\mathds{R}^2$. Hence, $S(G)$ is an Alexandrov space with $\Sec\
S(G)\geq0.$ Conversely, if $S(G)$ is an Alexandrov space with $\Sec\
S(G)\geq0,$ then the total angle of each point of $S(G)$ is at most
$2\pi$, which implies the nonnegative curvature condition at the
vertices.
\end{proof}

In the following, we investigate the metric structure of regular
polygonal surfaces by Alexandrov space methods, where we don't use
the Gauss-Bonnet formula.

\begin{lemma}\label{ENDL} Let $G=(V,E,F)$ be a semiplanar graph, $G_1$ be the $1$-dimensional simplicial complex and $S(G)$ be the regular polygonal surface. Then we have
$$N(G_1)=N(S(G)).$$
\end{lemma}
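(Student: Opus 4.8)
The plan is to compare the two spaces through the inclusion $\iota:(G_1,d^{G_1})\hookrightarrow (S(G),d)$, and to compute the number of ends of each using a single, combinatorially adapted exhaustion rather than metric balls. Since the text records that $N(X)$ does not depend on the chosen exhaustion, and since $S(G)$ is complete (Theorem \ref{CMS1}) and locally compact (each face is compact and $G$ is locally finite), closed metric balls in $S(G)$ are compact; hence exhaustions by metric balls and exhaustions by finite unions of faces are cofinal in one another and compute the same $N$. Fixing a vertex $p$ as base point, I would set $U_n=\bigcup\{\sigma\in F:\sigma\ \text{meets the combinatorial ball } B^G_n(p)\}$, a finite union of closed faces, and $\widetilde U_n=U_n\cap G_1$, a finite subcomplex; then $\{U_n\}$ exhausts $S(G)$ and $\{\widetilde U_n\}$ exhausts $G_1$. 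Using Lemma \ref{LEM1}, a subset of $G_1$ is $d$-bounded if and only if it is $d^{G_1}$-bounded, and a sequence in $G_1$ escapes every ball in $(S(G),d)$ if and only if it escapes every ball in $(G_1,d^{G_1})$; this is what lets me transport the notion of ``connecting to infinity'' back and forth across $\iota$.

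The heart of the argument is, for each fixed $n$, to produce a bijection between the unbounded connected components of $S(G)\setminus U_n$ and those of $G_1\setminus\widetilde U_n$, compatible with the inclusion maps of the two inverse systems as $n$ grows. For surjectivity I would argue that every point $z$ of a complementary component $E\subset S(G)\setminus U_n$ lies in a face whose boundary avoids $U_n$, so $z$ can be joined inside $E$ to a boundary arc lying in $G_1\setminus\widetilde U_n$; thus $E\cap G_1\neq\emptyset$, it is a union of components of $G_1\setminus\widetilde U_n$, and by Lemma \ref{LEM1} it is unbounded exactly when $E$ is. For injectivity I would take two points of $G_1\setminus\widetilde U_n$ lying in a single component of $S(G)\setminus U_n$, join them by a path there, and replace each maximal sub-arc running through the interior of a face by an arc along that face's boundary, obtaining a path in $G_1\setminus\widetilde U_n$; hence the two points already lie in the same component of $G_1\setminus\widetilde U_n$. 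Letting $n\to\infty$, the compatible bijections pass to the inverse limit and yield $N(G_1)=N(S(G))$.

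The step I expect to be the main obstacle is the boundary bookkeeping hidden in the word ``avoids'' above: a face $\sigma$ may fail to lie in $U_n$ while some of its boundary edges have been absorbed into $\widetilde U_n$ through an adjacent face that does lie in $U_n$, so the naive projection of an interior point onto $\partial\sigma$ need not remain in the complement, and an attached face could a priori merge two distinct complementary components. I would resolve this by exploiting that each face has finite diameter and that two vertices of a common face are at bounded combinatorial distance, so that such merging and such boundary absorption occur only within a controlled collar around $\partial U_n$; passing from the index $n$ to $n+1$ (enlarging $U_n$ so as to swallow this collar) removes the ambiguity and makes the face-by-face projection land in the complement. The conceptual point underlying the whole scheme is that each face, being compact, can affect the connectivity of the complement only within a bounded region and therefore cannot merge two genuinely distinct ends, which are separated arbitrarily far out; this is precisely what the bi-Lipschitz equivalence of Lemma \ref{LEM1} allows me to make metrically rigorous.
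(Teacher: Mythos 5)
Your overall strategy is the paper's strategy: push curves in $S(G)$ face by face onto the $1$-skeleton (the device already used in the proof of Lemma \ref{LEM1}) and exhaust $S(G)$ by finite unions of faces attached to combinatorial balls. The difference is in the packaging, and it is exactly this packaging that creates the obstacle you flag at the end. You insist on matched exhaustions, $U_n$ for $S(G)$ and $\widetilde U_n=U_n\cap G_1$ for $G_1$, and try to build a level-wise bijection of unbounded components; the price is that an edge of a face $\sigma\not\subset U_n$ can still lie in $\widetilde U_n$ through the neighbouring face, so the pushed path may re-enter $\widetilde U_n$, and you then have to introduce collars and index shifts to repair this. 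The paper simply does not match the exhaustions: it compares $S(G)\setminus Q_i$ (with $Q_i$ the union of faces meeting $B^{G_1}_{R_i}(p)$) against $G_1\setminus B^{G_1}_{R_i}(p)$, \emph{not} against $G_1\setminus(Q_i\cap G_1)$. Then the bookkeeping disappears: any curve $\gamma$ avoiding $Q_i$ meets only faces $\tau$ with $\tau\cap B^{G_1}_{R_i}(p)=\emptyset$, so the pushed curve $\gamma'$, which lives on the boundaries of those faces, automatically lies in $G_1\setminus B^{G_1}_{R_i}(p)$; this gives $N(G_1)\le N(S(G))$ in one stroke, while the opposite inequality is immediate from $G_1\subset S(G)$ (a path in $G_1$ avoiding $K\cap G_1$ avoids $K$). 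So the two inequalities are proved separately with asymmetric exhaustions instead of a single bijection, and no collar argument is ever needed.

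Two assertions in your sketch are not literally correct and should be repaired if you carry out your version. First, it is not true that every point $z$ of a component $E\subset S(G)\setminus U_n$ lies in a face whose boundary avoids $U_n$: a face can have empty intersection with $B^G_n(p)$ while every one of its edges is absorbed into $\widetilde U_n$ by adjacent faces, in which case its interior is a component of $S(G)\setminus U_n$ disjoint from $G_1\setminus\widetilde U_n$. Such components are bounded, so they are harmless for counting ends, but your surjectivity argument must be phrased for points of $E$ sufficiently far out rather than for all of $E$. Second, the size of the collar you need to swallow is governed by the degrees of the faces meeting $\partial U_n$; since the lemma is stated without any curvature hypothesis there is no uniform bound $D$, only the finiteness coming from local finiteness at each fixed $n$, which is enough because ends are computed from any cofinal exhaustion. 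With these two repairs your argument closes, but the paper's choice of exhaustions makes both issues vanish.
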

\begin{proof} It is easy to show that $N(S(G))\leq N(G_1),$ since $G_1\subset S(G).$ So it suffices to prove that $N(G_1)\leq N(S(G)).$

Let $\{B_{R_i}^{G_1}(p)\}_{i=1}^{\infty}$ be an exhaustion of $G_1,$
such that $G_1\setminus B_{R_i}^{G_1}(p)$ has $N_i$ different
connected components connecting to infinity, denoted by
$E_1^{i},\cdots,E_{N_i}^i,$ and
$N(G_1)=\lim_{i\rightarrow\infty}N_i.$ By the local finiteness of
$G$, $N_i<\infty.$ For any $i\geq1,$ let $Q_i=\bigcup\{\sigma\in
F:\sigma\cap B_{R_i}^{G_1}(p)\neq\emptyset\},$ i.e. the union of the
faces attached to $B_{R_i}^{G_1}(p).$ By the local finiteness of
$G$, $Q_i$ is compact. We shall prove that $S(G)\setminus Q_i$ has
at least $N_i$ different connected components connecting to
infinity, then we have $N(S(G))\geq N_i$ for any $i\geq1,$ which
implies the lemma.

For fixed $i\geq1,$ let $H_j:=E_j^i\cap(S(G)\setminus Q_i),
j=1,\cdots,N_i.$ It is easy to see that $H_j\neq\emptyset,$ since
$E_j^i$ is connecting to infinity for $1\leq j\leq N_i.$ We shall
prove that for any $j\neq k, H_j$ and $H_k$ are disconnected in
$S(G)\setminus Q_i.$ Suppose it is not true, then there exist $x\in
H_j,$ $y\in H_k$ and a curve $\gamma:[a,b]\rightarrow S(G)$ in
$S(G)\setminus Q_i$ joining $x$ and $y,$ i.e.
\begin{equation}\label{ENDL1}\gamma\cap Q_i=\emptyset.\end{equation}
As in the proof of Lemma \ref{LEM1}, we can find a curve
$\gamma':[a,b]\rightarrow G_1$ in $G_1$ such that $\gamma'$ and
$\gamma$ pass through the same faces, i.e. for any $t\in[a,b],$
there is a face $\tau$ such that $\gamma(t)\in \tau$ and
$\gamma'(t)\in\tau.$ Since $H_j$ and $H_k$ are disconnected in
$G_1\setminus B_{R_i}^{G_1}(p),$ we have $\gamma'(t_0)\in
B_{R_i}^{G_1}(p),$ for some $t_0\in[a,b].$ Then there exists a face
$\tau$ such that $\gamma(t_0)\in\tau$ and $\gamma'(t_0)\in\tau.$
Hence $\tau\subset Q_i$ and $\gamma\cap Q_i\neq\emptyset,$ which
contradicts to (\ref{ENDL1}).

\end{proof}

By this lemma, we can apply the Cheeger-Gromoll splitting theorem to
the polygonal surface of the semiplanar graph with nonnegative
curvature.

\begin{theorem}\label{SPLTT} Let $G$ be a semiplanar graph with $\Sec (G)\geq0,$ $S(G)$ be the regular polygonal surface. If $N(G_1)\geq2,$ then $S(G)$ is isometric to a cylinder without boundary.
\end{theorem}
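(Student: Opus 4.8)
The plan is to exhibit an infinite geodesic in $S(G)$ and invoke the Cheeger-Gromoll splitting theorem (Theorem \ref{SPLIT}), after which a short classification of the one-dimensional factor forces $S(G)$ to be a cylinder. By the corollary above, $S(G)$ is a complete $2$-dimensional Alexandrov space with $\Sec\ S(G)\geq 0$; being a complete, locally compact length space, it is proper by the Hopf-Rinow theorem, so bounded closed sets are compact and minimizing geodesics join any two points. By Lemma \ref{ENDL} the hypothesis $N(G_1)\geq 2$ gives $N(S(G))\geq 2$, so $S(G)$ has at least two ends.

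The heart of the argument, and the step I expect to be the main obstacle, is the construction of a line. Fixing a base point $p$ and using $N(S(G))\geq 2$, I would choose a radius $R_0$ so that $S(G)\setminus B_{R_0}^{S(G)}(p)$ has two distinct connected components $E_1,E_2$ connecting to infinity. Since each $E_j$ connects to infinity, I pick sequences $x_j\in E_1$ and $y_j\in E_2$ with $d(p,x_j),d(p,y_j)\to\infty$, and let $\gamma_j$ be a minimizing geodesic from $x_j$ to $y_j$. Because $E_1$ and $E_2$ are different components of $S(G)\setminus B_{R_0}^{S(G)}(p)$, any path in $S(G)$ joining them must meet the compact closed ball $B_{R_0}^{S(G)}(p)$; in particular each $\gamma_j$ does. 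Reparametrizing by arclength so that $\gamma_j(0)\in B_{R_0}^{S(G)}(p)$, the base points $\gamma_j(0)$ accumulate at some $z$, and properness together with the Arzel\`a-Ascoli theorem yields a subsequence of the $\gamma_j$ converging uniformly on compact sets to a map $\gamma:(-\infty,\infty)\to S(G)$ with $\gamma(0)=z$. Since each $\gamma_j$ is minimizing and the lengths of the two arms tend to infinity, the limit $\gamma$ restricts to a shortest path on every subinterval, i.e. $\gamma$ is an infinite geodesic.

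With the line in hand, Theorem \ref{SPLIT} gives an isometric splitting $S(G)\cong Y\times\mathds{R}$, where $Y$ is a $1$-dimensional Alexandrov space with $\Sec\ Y\geq 0$. As recalled after the definition of Alexandrov spaces, the only such $Y$ are the straight line, $S^1$, a ray, and a closed interval. A ray or a closed interval would give $Y\times\mathds{R}$ a nonempty boundary, contradicting that $S(G)$ is a surface without boundary, so $Y$ is either $\mathds{R}$ or $S^1$. If $Y=\mathds{R}$ then $S(G)$ is isometric to $\mathds{R}^2$, which has a single end, contradicting $N(S(G))\geq 2$. Hence $Y=S^1$ and $S(G)$ is isometric to $S^1\times\mathds{R}$, a cylinder without boundary, as claimed.
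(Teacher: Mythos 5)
Your proposal is correct and follows essentially the same route as the paper: Lemma \ref{ENDL} to transfer the two-ends hypothesis to $S(G)$, the standard line construction (which the paper only cites as ``a standard Riemannian geometry argument'' but you spell out correctly via properness and Arzel\`a--Ascoli), the Cheeger--Gromoll splitting theorem, and the elimination of all one-dimensional factors except $S^1$ using the absence of boundary and the number of ends. No gaps.
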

\begin{proof} By Lemma \ref{ENDL}, it follows from $N(G_1)\geq2$ that $N(S(G))\geq2.$ A standard Riemannian geometry argument proves the existence of an infinite geodesic $\gamma:(-\infty,\infty)\rightarrow S(G).$ Since $S(G)$ is an Alexandrov space with nonnegative curvature, the Cheeger-Gromoll splitting theorem, Theorem \ref{SPLIT}, shows that $S(G)$ is isometric to $Y\times \mathds{R},$ where $Y$ is a $1$-dimensional Alexandrov space without boundary, i.e. straight line or circle. Because $N(S(G))\geq2,$ $Y$ must be a circle. Hence, $S(G)$ is isometric to a cylinder without boundary.
\end{proof}

\begin{remark}\label{RMB} \emph{Since the Cheeger-Gromoll splitting theorem holds for Alexandrov space with boundary, we may formulate the above theorem in the case of regular polygonal surfaces with boundary (homeomorphic to a manifold with boundary). For the vertex} $x$ \emph{on the boundary, we define the combinatorial curvature as}
$$\Phi(x)=1-\frac{d_x}{2}+\sum_{\sigma\ni
  x}\frac{1}{\deg(\sigma)}=\frac{\pi-\Sigma_x}{2\pi},$$ \emph{where}
$\Sigma_x$ \emph{is the total angle at} $x.$ \emph{Let} $G$ \emph{be
a
  semiplanar graph with nonnegative curvature everywhere and} $N(G_1)\geq2,$ \emph{then the polygonal surface} $S(G)$ \emph{is isometric to either the cylinder without boundary or the cylinder with boundary, i.e.} $[a,b]\times \mathds{R}.$
\end{remark}

Next we consider the tilings (or tessellations) of the plane (see
\cite{GS}) and the construction of semiplanar graphs with
nonnegative curvature.

Let $G$ be a semiplanar graph with nonnegative curvature and $S(G)$
be the regular polygonal surface of $G.$ If $S(G)$ is isometric to
the plane, $\mathds{R}^2$, then $G$ is just a tiling of the plane by
regular polygons called a regular tiling. Then $G$ has vanishing
curvature everywhere. There are infinitely many tilings of the
plane. A classification is possible only for regular ones. In this
paper, we only consider regular tilings. A tiling is called
monohedral if all tiles are congruent. The only three monohedral
tilings are by triangles,  squares or hexagons. There are 11
distinct tilings such that all vertices are of the same pattern:
$$(3^6)(3^4,6)(3^3,4^2)(3^2,4,3,4)(3,4,6,4)(3,6,3,6)(3,12^2)(4^4)(4,6,12)(4,8^2)(6^3).$$
They are called Archimedean tilings and they clearly include the
three monohedral tilings.

If $S(G)$ has at least two ends, then by Theorem \ref{SPLTT} it is
isometric to a cylinder without boundary and $G$ has vanishing
curvature everywhere. If $S(G)$ is nonorientable, then by Theorem
\ref{Chens} and the Gauss-Bonnet formula (\ref{GBF}) $S(G)$ is
homeomorphic to the projective plane minus one point and $G$ has
vanishing curvature everywhere.

Conversely, if $G$ has vanishing curvature everywhere, then so does
$S(G)$. Hence, $S(G)$ is isometric to $\mathds{R}^2,$ or a cylinder
if it is orientable. $S(G)$ is homeomorphic to the projective plane
minus one point if it is nonorientable.

In addition, if $G$ has positive curvature somewhere, then so does
$S(G),$ which implies that $S(G)$ is not isometric to
$\mathds{R}^2,$ but by the Gauss-Bonnet formula (\ref{GBF}), it is
homeomorphic to $\mathds{R}^2$. We call it a cap.

An isometry of $\mathds{R}^2$ is a mapping of $\mathds{R}^2$ onto
itself which preserves the Euclidean distance. All isometries of
$\mathds{R}^2$ form a group. It is well known that every isometry of
$\mathds{R}^2$ is of one of four types: 1. rotation, 2. translation,
3. reflection in a given line, 4. glide reflection, i.e. a
reflection in a given line composed with a translation parallel to
the same line (see \cite{GS}).

For any planar tiling $\Sigma,$ an isometry is called a symmetry of
$\Sigma$ if it maps every tile of $\Sigma$ onto a tile of $\Sigma.$
It is easy to see that all symmetries of $\Sigma$ form a subgroup of
isometries of $\mathds{R}^2.$ We denote by $S(\Sigma)$ the group of
symmetries of $\Sigma.$ For any $\iota\in S(\Sigma),$ we denote by
$<\iota>$ the subgroup of $S(\Sigma)$ generated by the symmetry
$\iota.$ The metric quotient of $\mathds{R}^2$ by $<\iota>,$ denoted
by $\mathds{R}^2/<\iota>,$ is a metric space with quotient metric
obtained by the group action $<\iota>$ (see \cite{BBI}). The
following lemma shows the construction of the tilings of a cylinder.

\begin{lemma}\label{CYLC} There is a correspondence between a planar tiling $\Sigma$ with a translation symmetry $T,$ $(\Sigma, T)$ and a tiling of a cylinder.
\end{lemma}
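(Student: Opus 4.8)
The plan is to establish a bijective correspondence between the two objects in the statement. First I would make precise what a ``tiling of a cylinder'' means: since a cylinder is isometric to a metric quotient $\mathds{R}^2/\langle T\rangle$ for some translation $T$, a tiling of the cylinder is the image, under the quotient map $\pi:\mathds{R}^2\rightarrow\mathds{R}^2/\langle T\rangle$, of a $T$-invariant planar tiling. So the construction I would give goes from a pair $(\Sigma,T)$, where $\Sigma$ is a planar tiling and $T\in S(\Sigma)$ is a translation symmetry, to the quotient tiling $\pi(\Sigma)$ on the cylinder $\mathds{R}^2/\langle T\rangle$.

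The forward direction is the heart of the matter. Given $(\Sigma,T)$, I would first note that $\langle T\rangle\cong\mathds{Z}$ acts on $\mathds{R}^2$ by isometries, freely and properly discontinuously (since $T$ is a nontrivial translation), so the quotient $\mathds{R}^2/\langle T\rangle$ is a smooth flat cylinder and $\pi$ is a local isometry and covering map. The key point is that because $T$ maps every tile of $\Sigma$ onto a tile of $\Sigma$, the image $\pi(\Sigma)$ is a well-defined cell decomposition of the cylinder: I would verify that $\pi$ identifies each $T$-orbit of tiles to a single tile downstairs, that interiors of tiles remain disjoint after projection (this uses that the quotient identification only glues points differing by a power of $T$, and $T$ respects the tiling), and that edges map to edges and vertices to vertices. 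One must check that no tile ``wraps around'' the cylinder so as to self-overlap; this is guaranteed by choosing a fundamental domain, say a strip bounded by two parallel lines a translate $T$ apart, and observing that $\pi$ restricted to the interior of such a strip is injective.

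For the reverse direction, given a tiling $\mathcal{T}$ of a cylinder $C=\mathds{R}^2/\langle T\rangle$, I would lift it through the universal cover $\pi:\mathds{R}^2\rightarrow C$ to obtain $\Sigma:=\pi^{-1}(\mathcal{T})$, a tiling of the plane, and observe that $T$ (the deck transformation generating the covering group) is automatically a translation symmetry of $\Sigma$, so that $(\Sigma,T)$ is a valid pair. The two constructions are mutually inverse essentially by the covering-space correspondence between $T$-invariant structures upstairs and structures downstairs, which gives the claimed correspondence.

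The main obstacle I expect is the injectivity/non-overlap verification in the forward direction: ensuring that projecting the planar tiling does not cause distinct tiles in a single fundamental strip to be identified or to overlap on the cylinder. The clean way around this is to fix a fundamental domain for the $\langle T\rangle$-action and argue that $\pi$ is injective on its interior, so that the tiles of $\Sigma$ meeting the strip descend to genuinely distinct cylinder tiles; periodicity of $\Sigma$ under $T$ then shows these strip tiles already exhaust $\mathcal{T}$. I would also remark that this correspondence is stated only up to the obvious ambiguity (the pair $(\Sigma,T)$ determines the cylinder and its tiling, while a cylinder tiling determines $(\Sigma,T)$ once a generator $T$ of the deck group is fixed), which is all that is needed for the subsequent construction of semiplanar graphs with nonnegative curvature on the cylinder.
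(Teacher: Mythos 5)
Your proposal follows essentially the same route as the paper: the forward direction takes the metric quotient $\mathds{R}^2/\langle T\rangle$ and pushes the tiling down, and the reverse direction lifts a cylinder tiling through the universal cover, reading off the translation symmetry from the deck transformation. The additional verifications you supply (free and properly discontinuous action, non-overlap of projected tiles via a fundamental strip) are details the paper leaves implicit, so your argument is a correct, slightly more careful version of the same proof.
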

\begin{proof} For any planar tiling $\Sigma$ with a translation symmetry $T,$ the metric quotient $\mathds{R}^2/<T>$ is isometric to a cylinder. The tiling $\Sigma$ induces a tiling of $\mathds{R}^2/<T>.$

Conversely, given a tiling $\Sigma'$ of a cylinder $W,$ we lift $W$
to its universal cover $\mathds{R}^2$ by a map
$\pi:\mathds{R}^2\rightarrow W.$ It is easy to see that $\pi$ is
locally isometric, since $W$ is flat. The tiling $\Sigma'$ can be
lifted by $\pi$ to a tiling $\Sigma$ of $\mathds{R}^2,$ which has a
translation symmetry by  construction.
\end{proof}

Next we consider the metric structure of the semiplanar graph with
nonnegative curvature such that the corresponding regular polygonal
surface is nonorientable, i.e. homeomorphic to the projective plane
minus one point.

\begin{lemma}\label{PJC} There is a correspondence between a planar tiling $\Sigma$ with a glide reflection symmetry $\iota,$ $(\Sigma,\iota)$ and a tiling of the projective plane minus one point with nonnegative curvature.
\end{lemma}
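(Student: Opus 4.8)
The plan is to follow the proof of Lemma \ref{CYLC}, replacing the translation symmetry by a glide reflection and identifying the resulting quotient as an open M\"obius band, which is homeomorphic to the projective plane minus one point.

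For the forward direction, I would begin with a planar tiling $\Sigma$ carrying a glide reflection symmetry $\iota$. Choosing coordinates so that the axis of $\iota$ is the $x$-axis, I may write $\iota(x,y)=(x+a,-y)$ with $a>0$; then $\iota^2$ is the translation by $(2a,0)$ and $\langle\iota\rangle$ is infinite cyclic. A glide reflection has no fixed point, so $\langle\iota\rangle$ acts freely, and it clearly acts properly discontinuously, whence the metric quotient $\mathds{R}^2/\langle\iota\rangle$ is a complete flat surface. On the strip $[0,a]\times\mathds{R}$ the map $\iota$ identifies the two vertical edges by $(0,y)\sim(a,-y)$, so the quotient is an open M\"obius band, which is homeomorphic to the projective plane minus one point. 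Since $\iota\in S(\Sigma)$, the tiling $\Sigma$ descends to a tiling of the quotient; because the quotient map is a local isometry and a planar tiling has vanishing curvature, the induced tiling has vanishing, hence nonnegative, curvature.

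For the reverse direction, I would take a tiling $\Sigma'$ of the projective plane minus one point with nonnegative curvature and first show it is flat. By Theorem \ref{Chens} the underlying surface is nonorientable, so in Theorem \ref{GBFT} we have $M$ equal to the projective plane and $t=1$, giving $\chi(S(\Sigma'))=1-1=0$; the Gauss-Bonnet formula (\ref{GBF}) then yields $\sum_{x}\Phi(x)\le0$, which together with $\Phi\ge0$ forces $\Phi\equiv0$. Thus every vertex has total angle $2\pi$ and $S(\Sigma')$ is a genuinely flat, complete surface (completeness by Theorem \ref{CMS1}) with $\pi_1\cong\mathds{Z}$. Its universal cover is therefore isometric to $\mathds{R}^2$, with a local isometry $\pi:\mathds{R}^2\to S(\Sigma')$ whose deck group $\Gamma\cong\mathds{Z}$ acts by isometries of $\mathds{R}^2$. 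Lifting $\Sigma'$ along $\pi$ produces a planar tiling $\Sigma$ invariant under $\Gamma$, and a generator $\iota$ of $\Gamma$ is the sought glide reflection.

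The main obstacle is verifying that this generator $\iota$ is exactly a glide reflection rather than one of the other isometry types. This is where nonorientability and freeness enter: since $S(\Sigma')$ is nonorientable, $\Gamma$ must contain an orientation-reversing element, so a generator $\iota$ reverses orientation; and since the deck action is free, $\iota$ has no fixed point. Among the four types of plane isometries recalled above, the only orientation-reversing one without a fixed point is a glide reflection, so $\iota$ is as required. In the forward direction the corresponding point of care is the identification of $\mathds{R}^2/\langle\iota\rangle$ with the projective plane minus one point, which I handle via the open M\"obius band.
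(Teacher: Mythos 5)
Your proof is correct, and while the forward direction coincides with the paper's (identify $\mathds{R}^2/\langle\iota\rangle$ with the strip $[0,a]\times\mathds{R}$ glued by $\iota$, i.e.\ an open M\"obius band, and let the tiling descend), your converse takes a genuinely different route. The paper passes to the orientation double cover $\pi:S^2\setminus\{S,N\}\to\mathds{R}P^2\setminus\{\underline{o}\}$, notes this cover has two ends, invokes the splitting theorem via Theorem \ref{SPLTT} to identify it as a metric cylinder, unrolls that cylinder to the plane by Lemma \ref{CYLC} to get a tiling with a translation symmetry $T_{2a}$, and then tracks the $\mathds{Z}_2$ deck action to produce the glide reflection $\iota=F_L\circ T_{a,L}$ with $\iota^2=T_{2a}$. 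You instead first force $\Phi\equiv 0$ by Gauss--Bonnet (a step the paper only asserts parenthetically), conclude that $S(\Sigma')$ is a complete, genuinely smooth flat surface (every cone angle is exactly $2\pi$) with $\pi_1\cong\mathds{Z}$, pass directly to the universal cover $\mathds{R}^2$ via Killing--Hopf, and pin down a generator of the deck group as a glide reflection by combining nonorientability (it must reverse orientation), freeness (it has no fixed point), and the classification of plane isometries. The paper's route stays entirely within the Alexandrov machinery it has already built (ends, splitting, Lemma \ref{CYLC}) and never needs to uniformize; your route is shorter and makes the identification of the symmetry type cleaner, at the cost of importing the standard fact that a complete simply connected flat surface is isometric to $\mathds{R}^2$ -- which is legitimate here precisely because vanishing combinatorial curvature removes all cone singularities. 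Both arguments share the same level of informality about why the tiling genuinely descends to (resp.\ lifts from) the quotient, so no complaint there.
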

\begin{proof} Let $\Sigma$ be a planar tiling with symmetry of a glide reflection $$\iota=T_{a,L}\circ F_L=F_L \circ T_{a,L},$$
where $a>0,$ $L$ is a straight line, $T_{a,L}$ is a translation
along $L$ through distance $a$ and $F_L$ is a reflection in the line
$L.$ The metric quotient $\mathds{R}^2/<\iota>$ is isometric to the
metric space obtained from gluing the boundary of $[0,a]\times
\mathds{R},$ which is perpendicular to the line $L,$ by the glide
reflection $\iota.$ It is easy to see that $\mathds{R}^2/<\iota>$ is
homeomorphic to the projective plane minus one point and has
vanishing curvature everywhere. Hence the planar tiling $\Sigma$ and
the symmetry $\iota$ of $\Sigma$ induce a tiling of
$\mathds{R}^2/<\iota>.$

Conversely, let $\Sigma'$ be a tiling of $\mathds{R}P^2\setminus
\{\underline{o}\},$ with nonnegative curvature (actually with
vanishing curvature everywhere). We construct a covering map of
$\mathds{R}P^2\setminus \{\underline{o}\}$ with a $\mathds{Z}_2$
action,
$$\pi:S^2\setminus \{S,N\}\rightarrow \mathds{R}P^2\setminus
\{\underline{o}\},$$ where $S$ and $N$ are the south and north pole
of $S^2.$ We lift the tiling $\Sigma'$ to a tiling $\Sigma''$ of
$S^2\setminus \{S,N\}.$ Since $\Sigma'$ has vanishing curvature
everywhere, so does the lifted tiling $\Sigma''.$ Note that
$S^2\setminus \{S,N\}$ has two ends. By Theorem \ref{SPLTT}, the
regular polygonal surface $S(\Sigma'')$ is isometric to a cylinder,
denoted by $(\frac{a}{\pi}S^1)\times \mathds{R}$. By Lemma
\ref{CYLC}, the tiling of a cylinder $\Sigma''$ can be regarded to
be a tiling of the cylinder which induces a planar tiling
$\Sigma'''$ and a translation symmetry $T_{2a}$ with
$T_{2a}$-invariant domain $[0,2a]\times\mathds{R}\subset
\mathds{R}^2.$ Since the $\mathds{Z}_2$ action of ${\pi}$, the
tiling $\Sigma'''$ has a glide reflection symmetry $$\iota=F_L \circ
T_{a,L}$$ where $L$ is parallel to the direction of the translation
$T_{2a}.$
\end{proof}

By the discussion above, we obtain the metric classification of
$S(G)$ for a semiplanar graph $G$ with nonnegative curvature.

\begin{theorem}\label{METRICC}Let $G$ be a semiplanar graph with nonnegative curvature and $S(G)$ be the regular polygonal surface of $G.$ If $G$ has positive curvature somewhere, then $S(G)$ is isometric to a cap which is homeomorphic but not isometric to the plane. If $G$ has vanishing curvature everywhere, then $S(G)$ is isometric to a plane, or a cylinder without boundary if it is orientable, and $S(G)$ is isometric to a metric space obtained from gluing the boundary of $[0,a]\times \mathds{R}$ by a glide reflection, $\iota=T_{a,L}\circ F_L,$ where $L$ is perpendicular to the cylinder, if it is nonorientable.
\end{theorem}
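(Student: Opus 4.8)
The plan is to combine Chen's topological trichotomy (Theorem~\ref{Chens}) with the three rigidity inputs already available---the splitting theorem (Theorem~\ref{SPLTT}), the Gauss--Bonnet formula (\ref{GBF}), and the glide-reflection correspondence (Lemma~\ref{PJC})---and to organize the argument according to whether $G$ carries positive curvature and whether $S(G)$ is orientable. Throughout I would use the equivalence, established in the corollary above, that $\Sec(G)\geq0$ makes $S(G)$ a nonnegatively curved Alexandrov surface, together with Theorem~\ref{GBFT}, which guarantees that only finitely many vertices carry curvature.

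First I would dispose of the positively curved case. Suppose $G$ has a vertex with $\Phi(x)>0$. By Theorem~\ref{Chens}, $S(G)$ is homeomorphic to $\mathds{R}^2$, the cylinder, or $\mathds{R}P^2$ minus a point. The cylinder is excluded, for Theorem~\ref{SPLTT} would force $S(G)$ to split isometrically and hence be flat, contradicting the presence of a cone point. The punctured projective plane is excluded by Gauss--Bonnet: there $\chi(M)=\chi(\mathds{R}P^2)=1$ and $t=1$, so $\chi(S(G))=0$, whereas $\sum_{x}\Phi(x)>0$ since the finitely many nonvanishing curvatures are all nonnegative and at least one is strictly positive. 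Thus $S(G)$ is homeomorphic to $\mathds{R}^2$; having a genuine cone point it cannot be isometric to the flat plane, so it is a cap.

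Next, assuming $G$ is flat everywhere, the total angle at every vertex equals $2\pi$, so each point of $S(G)$ has a flat Euclidean neighborhood and $S(G)$ is a complete flat $2$-manifold; Theorem~\ref{Chens} then leaves exactly three topological types. If $S(G)$ is orientable and homeomorphic to the cylinder, it has two ends, so $N(G_1)=N(S(G))\geq2$ by Lemma~\ref{ENDL} and Theorem~\ref{SPLTT} identifies it with a round cylinder without boundary. If $S(G)$ is orientable and homeomorphic to $\mathds{R}^2$, it is simply connected, and a complete simply connected flat surface is isometric to $\mathds{R}^2$, so $S(G)$ is the plane. Finally, if $S(G)$ is nonorientable it must be homeomorphic to $\mathds{R}P^2$ minus a point and flat (positivity having been ruled out as above), and Lemma~\ref{PJC} identifies it with the quotient of $\mathds{R}^2$ obtained by gluing the two boundary lines of $[0,a]\times\mathds{R}$ via the glide reflection $\iota=T_{a,L}\circ F_L$ with $L$ perpendicular to the boundary.

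The step requiring the most care is the one-ended orientable flat case, where one must upgrade ``homeomorphic to $\mathds{R}^2$'' to ``isometric to $\mathds{R}^2$.'' The point is that vanishing curvature at every vertex removes all cone points, so $S(G)$ is a bona fide complete flat Riemannian surface rather than merely a flat Alexandrov space; simple connectivity then makes the developing map a global isometry onto $\mathds{R}^2$, which is where I would concentrate the argument. It also remains to confirm that the orientability/end-count dictionary is consistent---that the only orientable two-ended type is the cylinder and that the nonorientable type is necessarily one-ended---which follows directly from the topological classification in Theorem~\ref{Chens}.
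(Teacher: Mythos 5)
Your proposal is correct and follows essentially the same route as the paper: the paper's proof is the discussion preceding the theorem, which likewise combines the Gauss--Bonnet formula (to force a cap in the positively curved case), the splitting theorem via Lemma~\ref{ENDL} (for the two-ended cylinder case), the standard rigidity of complete simply connected flat surfaces (for the plane), and Lemma~\ref{PJC} (for the nonorientable case). The only cosmetic difference is that you exclude the cylinder in the positively curved case by the splitting theorem, whereas the paper's Euler-characteristic computation $\chi(M)-t=0$ excludes the cylinder and the punctured projective plane simultaneously.
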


At the end of the paper \cite{Ch}, Chen raised a question on the
classification of infinite graphs with nonnegative curvature
everywhere which can be embedded into the projective plane minus one
point. By Lemma \ref{PJC}, it suffices to find the planar tiling
with a glide reflection symmetry.

\begin{theorem} The monohedral tilings of the projective plane minus one point with nonnegative curvature are of three types: triangle, square, hexagon.
\end{theorem}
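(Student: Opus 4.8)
The plan is to push the classification through the correspondence of Lemma~\ref{PJC}, reducing everything to the already-established list of monohedral tilings of the plane. Recall that among regular tilings the only monohedral ones are those by (congruent) triangles, squares, or hexagons; the whole statement will follow once I establish two facts: (i) a monohedral tiling of $\mathds{R}P^2\setminus\{\underline{o}\}$ lifts to a \emph{monohedral} planar tiling, and (ii) conversely each of the three monohedral planar tilings carries a glide reflection symmetry, so that Lemma~\ref{PJC} returns a monohedral tiling downstairs.

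For (i), let $\Sigma'$ be a monohedral tiling of $\mathds{R}P^2\setminus\{\underline{o}\}$ with nonnegative curvature. Since the surface is nonorientable, the curvature must in fact vanish everywhere (as discussed above, cf.\ Theorem~\ref{Chens} and~(\ref{GBF})), so $\Sigma'$ is flat. Lemma~\ref{PJC} exhibits $\Sigma'$ as the quotient $\mathds{R}^2/\langle\iota\rangle$ of a planar tiling $\Sigma$ by an infinite cyclic group generated by a glide reflection $\iota=T_{a,L}\circ F_L$, with projection $\pi:\mathds{R}^2\to\mathds{R}^2/\langle\iota\rangle$ a local isometry and covering map. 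As each tile is a disk, it is evenly covered, so every tile of $\Sigma$ maps isometrically onto a tile of $\Sigma'$; since $\Sigma'$ is monohedral, all tiles of $\Sigma$ are congruent. Thus $\Sigma$ is a monohedral planar tiling, and by the classification above it --- hence also $\Sigma'$ --- is of triangle, square, or hexagon type.

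For (ii), I would simply record that the symmetry groups of the triangular, square, and hexagonal tilings each contain glide reflections, so in every case there is a symmetry of the required form $\iota=T_{a,L}\circ F_L$ with $a>0$. Feeding such an $\iota$ into Lemma~\ref{PJC} produces a tiling of $\mathds{R}P^2\setminus\{\underline{o}\}$ with vanishing curvature, and it remains to see that this quotient tiling is again monohedral. Here one uses that $\iota$ has infinite order with $\iota^2=T_{2a,L}$ a nontrivial translation: no tile can be fixed by $\iota$ (a fixed tile would be fixed by the translation $\iota^2$, which is impossible), so $\iota$ acts freely on the tiles and $\pi$ is injective on the interior of each tile. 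Therefore every tile of the quotient is congruent to a single planar tile, and the three types are indeed all realized.

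The delicate point is (ii): one must be sure that passing to the quotient by the glide reflection neither identifies a tile with itself nor otherwise distorts it. This is settled by the fixed-point-free, infinite-order nature of $\iota$ together with the local isometry of $\pi$; modulo this, the argument is essentially bookkeeping on top of Lemma~\ref{PJC} and the planar classification.
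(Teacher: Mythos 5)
Your argument is correct and follows the same route as the paper: reduce via Lemma~\ref{PJC} to the classification of monohedral planar tilings (triangle, square, hexagon) together with a glide reflection symmetry. You supply more detail than the paper does --- checking that monohedrality is preserved under both the lift (even covering by the local isometry $\pi$) and the quotient (the glide reflection acts freely on tiles since its square is a nontrivial translation) --- but these are exactly the implicit steps behind the paper's one-line proof, not a different method.
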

\begin{proof} By Lemma \ref{PJC}, the monohedral tiling of the projective plane minus one point with nonnegative curvature is induced by the monohedral tiling of
the plane of triangles, of squares or of hexagons and a glide
reflection for the tiling.
\end{proof}

Chen \cite{Ch} gave two classes of monohedral tilings of the
projective plane with nonnegative curvature: $PS_n$ ($n$ is even)
and $PH_n$ ($n$ is odd). $PS_n$ is induced by the monohedral tiling
of the plane of squares. In fact, $PH_n$ ($n$ is odd) is a proper
subset of monohedral tilings of the projective plane minus one point
which are induced by the monohedral tiling of the plane by hexagons.
We give an example below (see Figure \ref{3}, \ref{2}) which is
induced by the tiling of the plane by hexagons, but is not included
in $PH_n$ ($n$ is odd). Let $PT,$ $PS,$ $PH$ denote the tilings of
the projective plane minus one point which are induced by the
monohedral tiling of the plane of triangles, squares, hexagons and a
glide reflection symmetry. They provide the complete classification
of monohedral tilings of the projective plane minus one point with
nonnegative curvature.

\begin{figure}[h]
\begin{minipage}[t]{0.45\linewidth}
\centering
\includegraphics[width=0.65\textwidth]{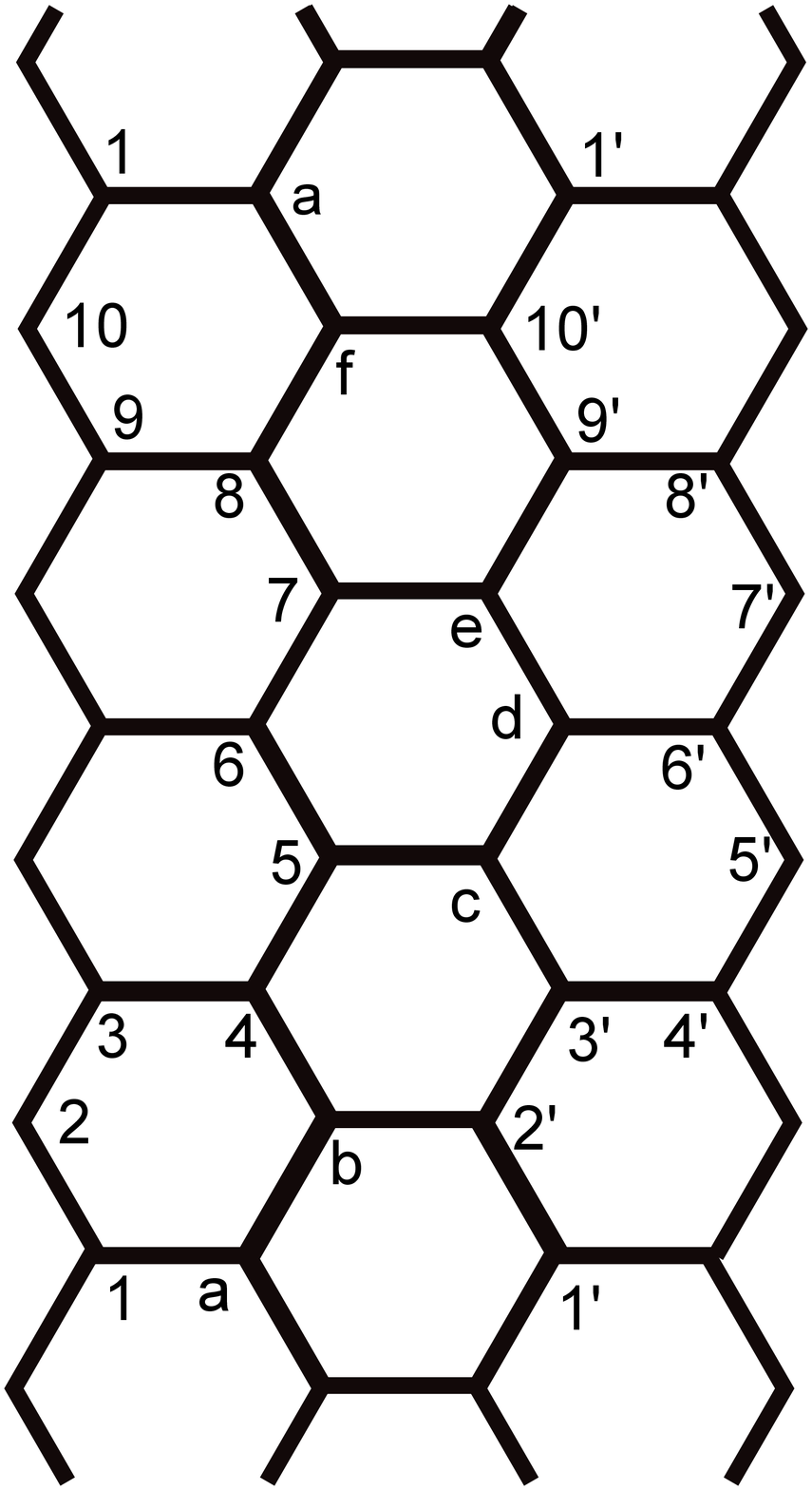}
\caption{(6,6,6) in $\mathds{R}^2$\label{3}}
\end{minipage}
\hfill
\begin{minipage}[t]{0.45\linewidth}
\centering
\includegraphics[width=\textwidth]{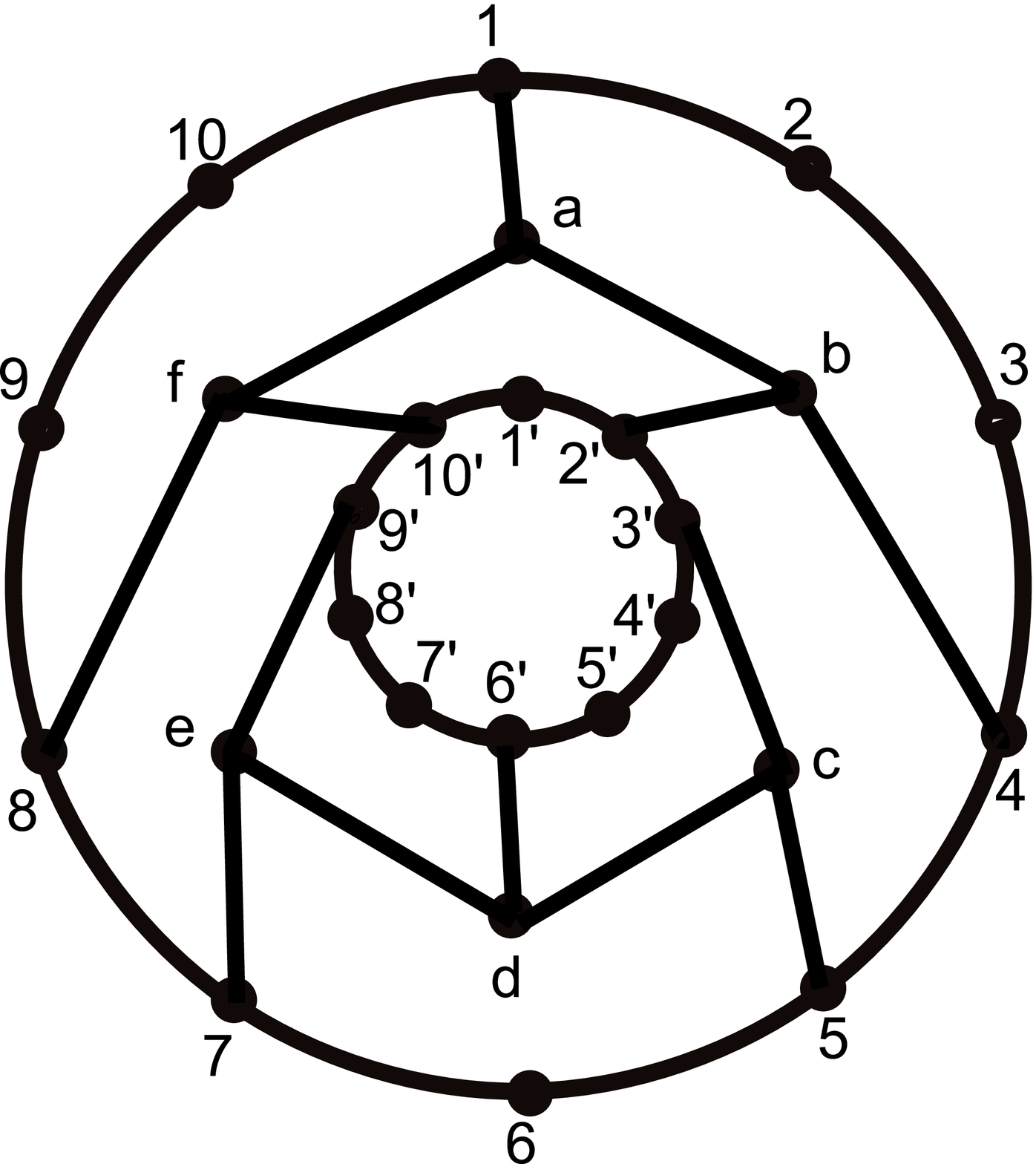}
\caption{(6,6,6) in
$\mathds{R}P^2\setminus\{\underline{o}\}$\label{2}}
\end{minipage}
\end{figure}

In addition, as the Archimedean tilings of the plane, we can
classify the tilings of the projective plane minus one point with
nonnegative curvature for which each vertex has the  same pattern.
\begin{theorem} The tilings of the projective plane minus one point
  with nonnegative curvature such that the pattern of each vertex is
  the same are induced by the Archimedean tilings of the plane and a gilde reflection symmetry.
\end{theorem}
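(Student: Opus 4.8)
The plan is to reduce the statement to the planar classification already recorded for Archimedean tilings by exploiting the correspondence of Lemma \ref{PJC}. Let $\Sigma'$ be a tiling of the projective plane minus one point with nonnegative curvature in which every vertex has the same pattern. By Lemma \ref{PJC}, $\Sigma'$ arises from a planar tiling $\Sigma$ together with a glide reflection symmetry $\iota=T_{a,L}\circ F_L$ of $\Sigma$, realized through the metric quotient $\mathds{R}^2/\langle\iota\rangle$. The goal is to show that this $\Sigma$ must be one of the eleven Archimedean tilings.

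First I would track the chain of maps producing $\Sigma$ from $\Sigma'$, namely the double cover $S^2\setminus\{S,N\}\to\mathds{R}P^2\setminus\{\underline{o}\}$, the isometry with a cylinder furnished by Theorem \ref{SPLTT}, and the lift from the cylinder to the plane supplied by Lemma \ref{CYLC}. Each of these is a local isometry of the associated regular polygonal surfaces, and a local isometry sends a vertex to a vertex with the identical total angle, incident faces, and facial degrees, hence preserves its pattern. Since a glide reflection has no fixed points, $\iota$ acts freely on $\mathds{R}^2$, so the projection $\mathds{R}^2\to\mathds{R}^2/\langle\iota\rangle$ introduces no exceptional vertices and every vertex of $\Sigma$ projects to a vertex of $\Sigma'$ with the same pattern. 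Therefore all vertices of the planar tiling $\Sigma$ share the single pattern carried by $\Sigma'$.

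Next I would invoke the planar classification quoted earlier in the paper: a regular tiling of $\mathds{R}^2$ in which every vertex has the same pattern is exactly one of the eleven Archimedean tilings. Consequently $\Sigma$ is Archimedean and $\iota$ is one of its glide reflection symmetries, which is precisely the asserted description. The converse inclusion is immediate: any Archimedean tiling equipped with a glide reflection symmetry induces, through $\mathds{R}^2/\langle\iota\rangle$ and Lemma \ref{PJC}, a tiling of the projective plane minus one point with nonnegative curvature all of whose vertices have the same pattern.

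The main obstacle I anticipate is the bookkeeping of the second paragraph, that is, verifying rigorously that the vertex pattern is genuinely an isometry invariant and that the glide reflection creates no degenerate vertices downstairs. Because $\iota$ acts freely and each intermediate map is a local isometry of polygonal surfaces, the local combinatorial data upstairs and downstairs agree faithfully; making this precise is the only nonformal point, and once it is in place the classification follows directly from the planar case.
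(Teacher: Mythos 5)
Your argument is correct and follows the same route the paper intends: the paper's own justification (given explicitly only for the monohedral analogue immediately above, and left implicit here) is precisely to invoke Lemma \ref{PJC} to produce the planar tiling with a glide reflection symmetry and then appeal to the quoted classification of planar tilings with a single vertex pattern into the eleven Archimedean ones. Your additional care in checking that the covering maps are local isometries preserving vertex patterns and that the glide reflection acts freely fills in details the paper takes for granted, but introduces no new idea.
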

We give two examples of tilings of the projective plane minus one
point which are induced by the Archimedean tilings and glide
reflection symmetries (see Figure \ref{5}, \ref{4}, \ref{7},
\ref{6}). It is easy to see that there are infinitely many tilings
of the projective plane minus one point with nonnegative curvature
because of the complexity of the tilings of the plane. Another way
to see the complexity is that we can apply the graph operation $P$
on the tiling of the projective plane minus one point with hexagonal
faces to obtain a new one.

\begin{figure}[h]
\begin{minipage}[t]{0.45\linewidth}
\centering
\includegraphics[width=0.8\textwidth]{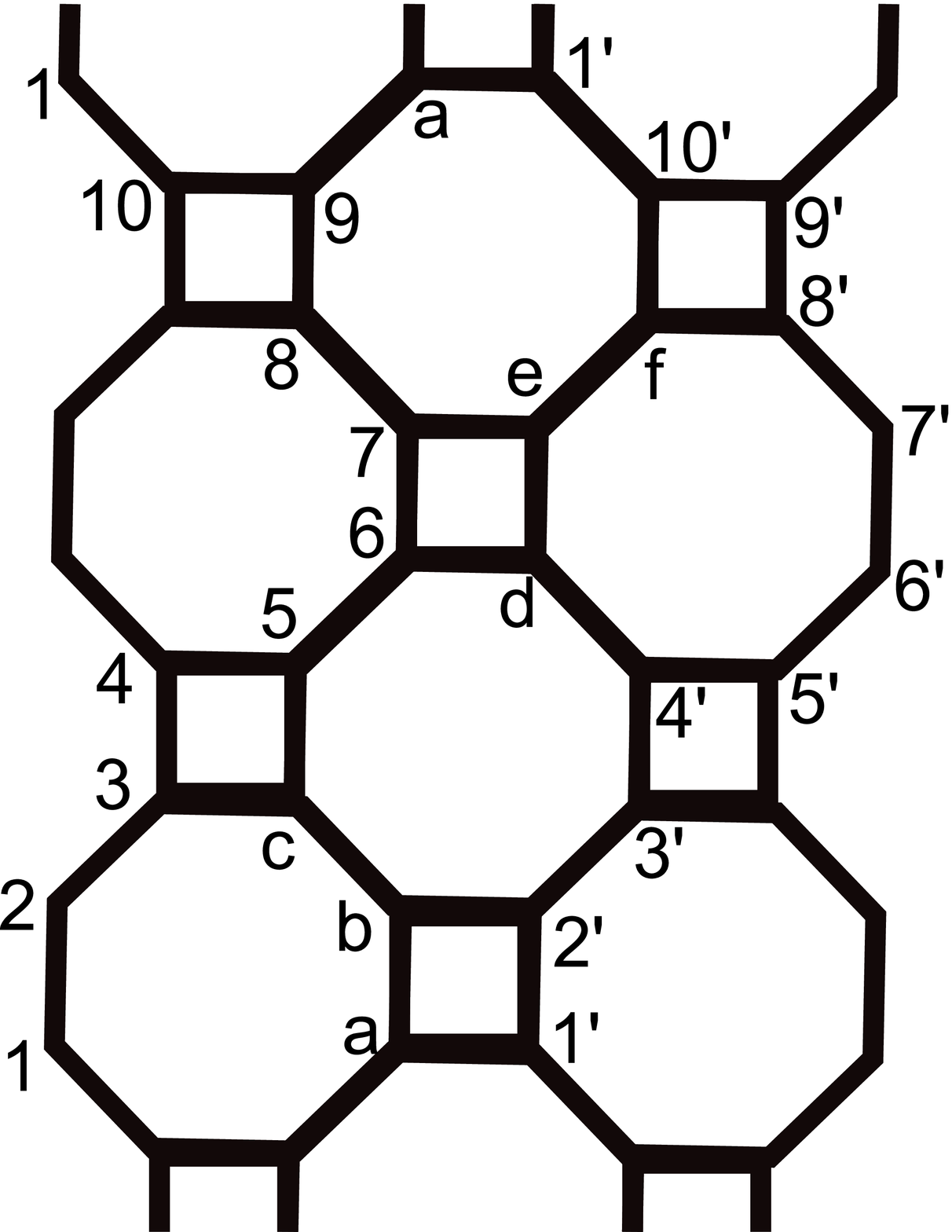}
\caption{(4,8,8) in $\mathds{R}^2$ \label{5}}
\end{minipage}
\hfill
\begin{minipage}[t]{0.45\linewidth}
\centering
\includegraphics[width=\textwidth]{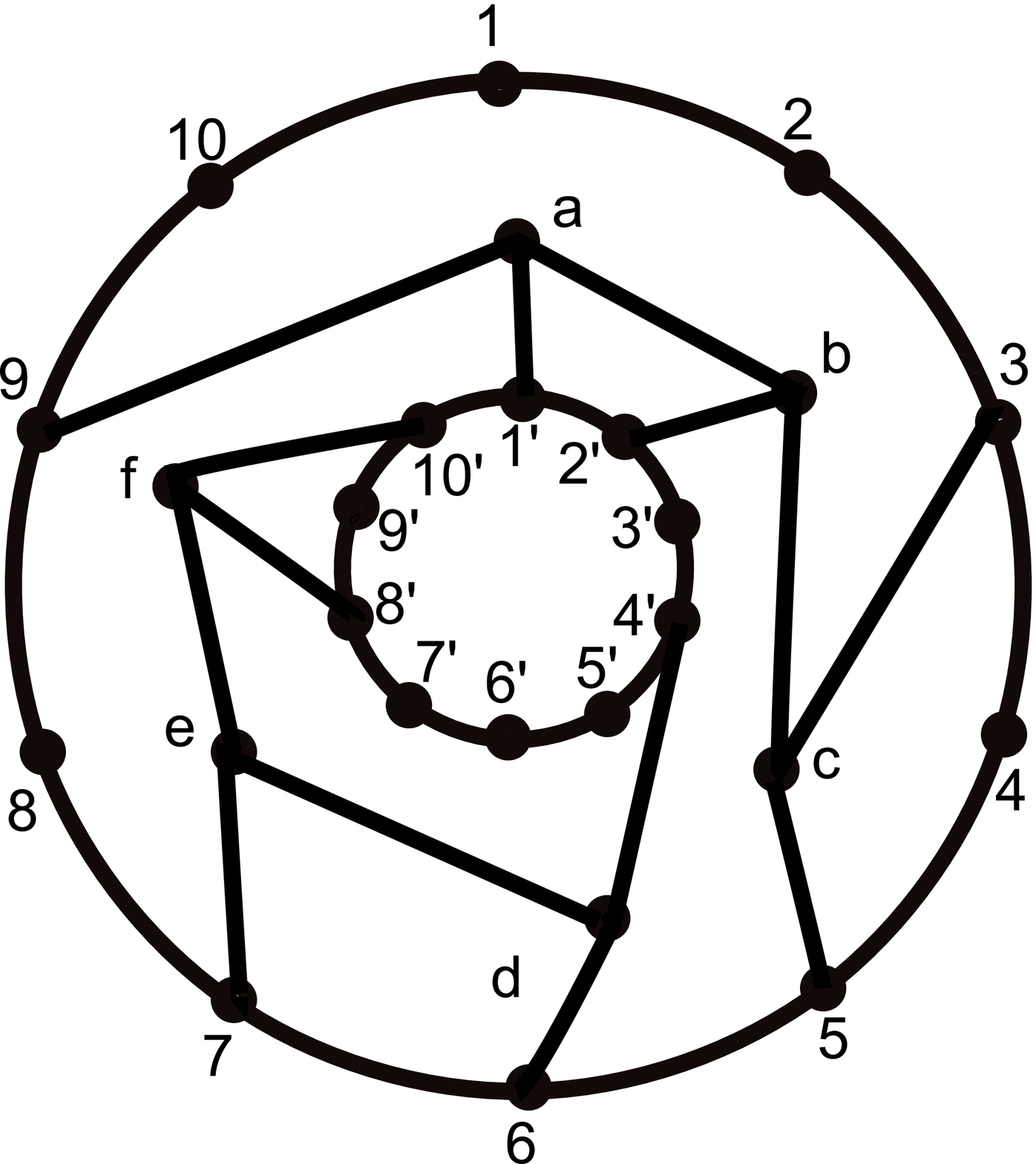}
\caption{(4,8,8) in
$\mathds{R}P^2\setminus\{\underline{o}\}$\label{4}}
\end{minipage}
\end{figure}

\begin{figure}[h]
\begin{minipage}[t]{0.45\linewidth}
\centering
\includegraphics[width=\textwidth]{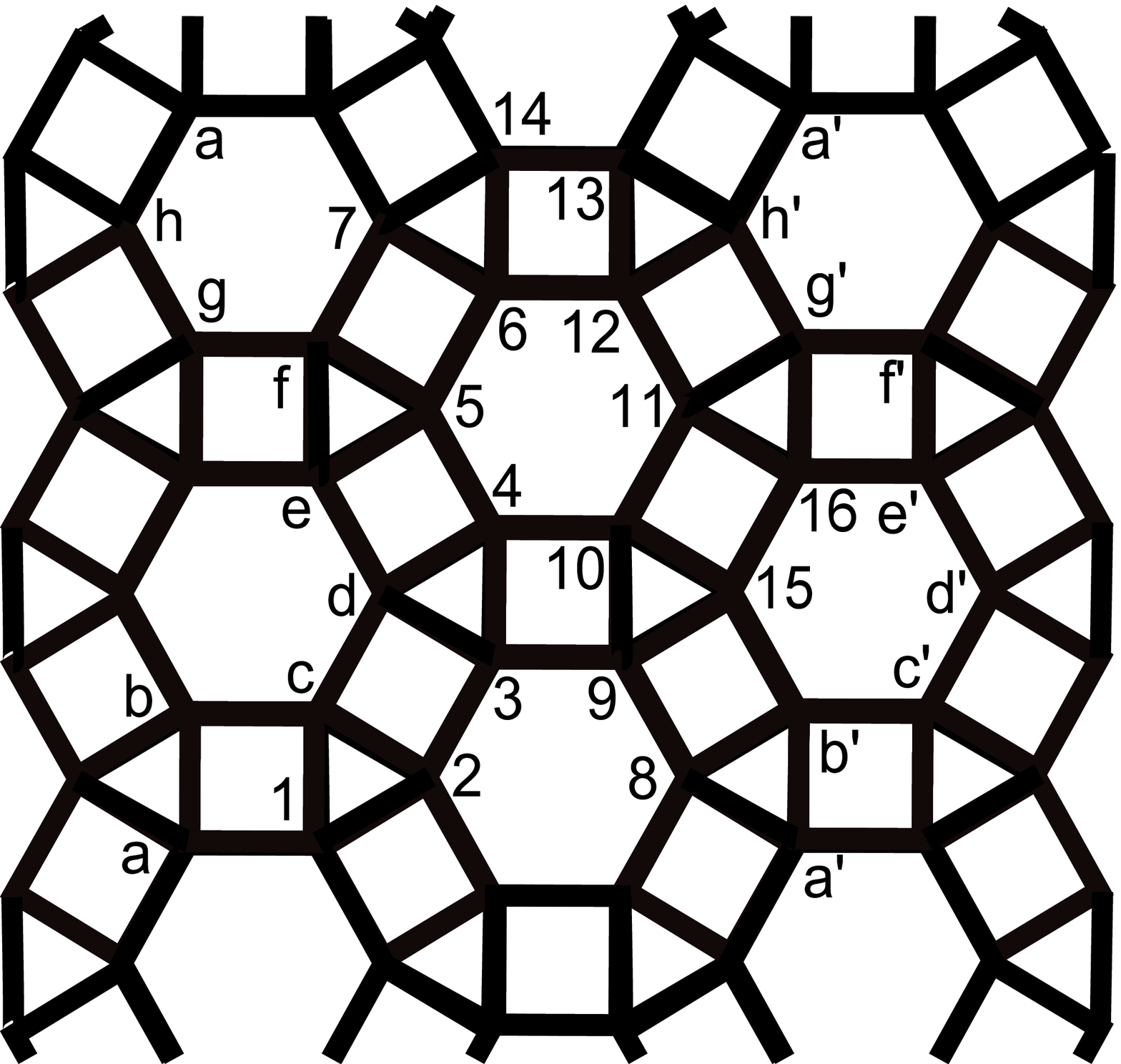}
\caption{(3,4,6,4) in $\mathds{R}^2$ \label{7}}
\end{minipage}
\hfill
\begin{minipage}[t]{0.45\linewidth}
\centering
\includegraphics[width=\textwidth]{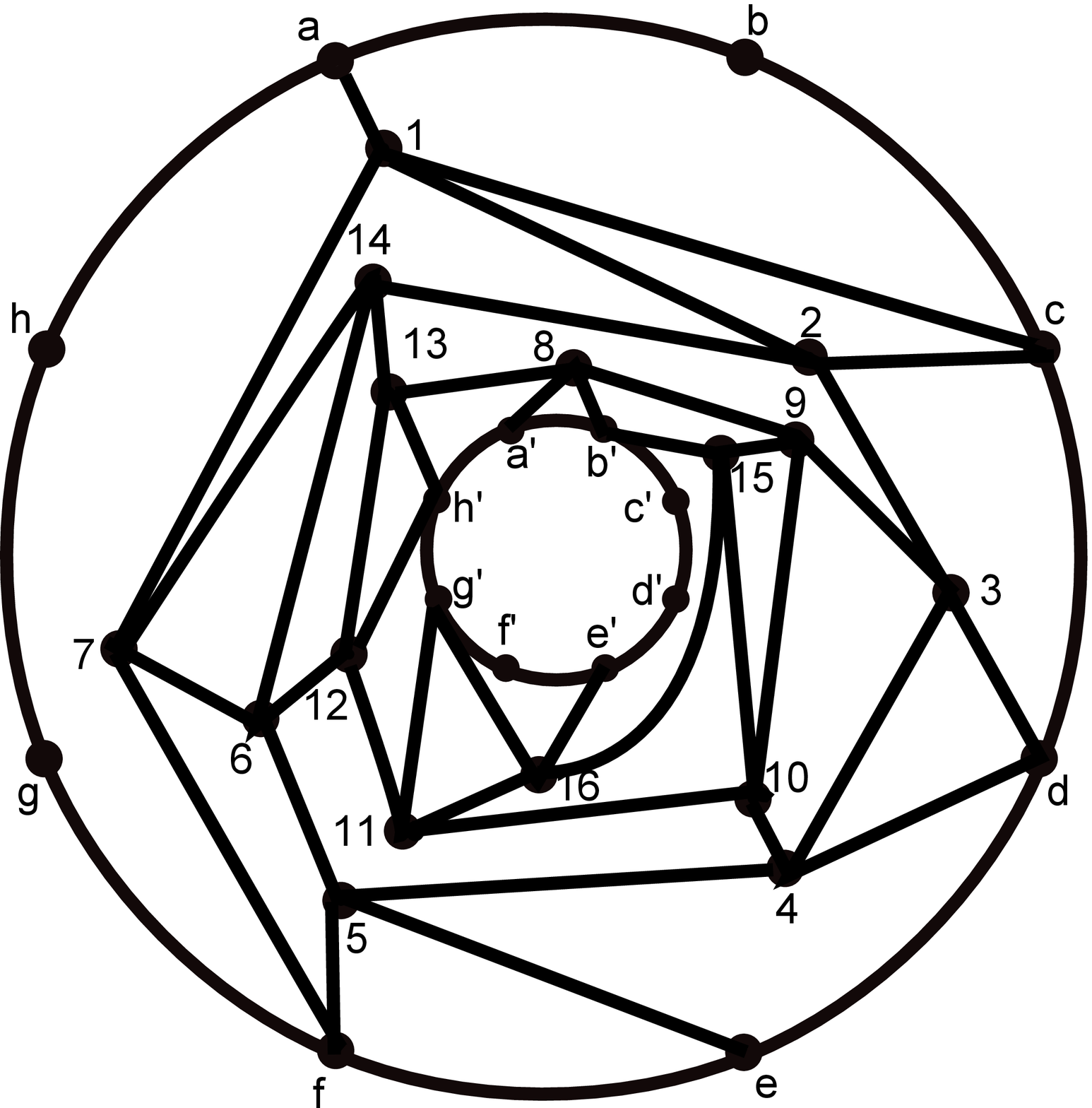}
\caption{(3,4,6,4) in
$\mathds{R}P^2\setminus\{\underline{o}\}$\label{6}}
\end{minipage}
\end{figure}

\section{Volume Doubling Property and Poincar\'e Inequality}
In this section, we shall prove the volume doubling property and the
Poincar\'e inequality for semiplanar graphs with nonnegative
curvature.

Let $G$ be a semiplanar graph  and $S(G)$ be the regular polygonal
surface of $G.$ For any $p\in G$ and $R>0,$ we denote by
$B_R(p)=\{x\in G: d^{G}(p,x)\leq R\}$ the closed geodesic ball in
the graph G, and by $B_R^{S(G)}(p)=\{x\in S(G): d(p,x)\leq R\}$ the
closed geodesic ball in the polygonal surface $S(G).$ The volume of
$B_R(p)$ is defined as $|B_R(p)|=\sum_{x\in B_R(p)}d_x,$ and the
volume of $B_R^{S(G)}(p)$ is defined as
$|B_R^{S(G)}(p)|=\mathcal{H}^2(B_R^{S(G)}(p)),$ where
$\mathcal{H}^2$ is the 2-dimensional Hausdorff measure. We denote by
$\sharp B_R(p)$ the number of vertices in the closed geodesic ball
$B_R(p).$ Note that for any semiplanar graph $G$ with nonnegative
curvature, $3\leq d_x\leq 6,$ for any $x\in G.$ Hence $|B_R(p)|$ and
$\sharp B_R(p)$ are equivalent up to a constant, i.e. $3 \sharp
B_R(p)\leq|B_R(p)|\leq 6 \sharp B_R(p),$ for any $p\in G$ and $R>0.$

\begin{theorem}\label{GRV} Let $G=(V,E,F)$ be a semiplanar graph with $\Sec (G)\geq0$. Then there exists a constant $C(D)$ depending on $D:=\max_{\sigma\in F}\deg(\sigma),$ such
that for any $p\in G$ and $0<r<R,$ we have
\begin{equation}\label{GRV1}\frac{|B_R(p)|}{|B_r(p)|}\leq C(D)\left(\frac{R}{r}\right)^2.\end{equation}
\end{theorem}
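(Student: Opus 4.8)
The plan is to transfer the problem to the polygonal surface $S(G)$, where the Bishop--Gromov relative volume comparison (\ref{RVCA1}) (with $n=2$) is already available, and then to compare the combinatorial volume $|B_R(p)|$ with the Hausdorff area $\mathcal{H}^2$ of geodesic balls in $S(G)$ of comparable radius. Since $3\le d_x\le 6$ for $\Sec(G)\ge 0$, one has $3\,\sharp B_R(p)\le|B_R(p)|\le 6\,\sharp B_R(p)$, so it suffices to argue with the vertex count $\sharp B_R(p)$. The bridge between the two metrics is the bi-Lipschitz equivalence of Lemma \ref{LEM1}: writing $C$ for its (absolute) constant, one has $B_R(p)\subset B_R^{S(G)}(p)$ and $B_R^{S(G)}(p)\cap V\subset B_{R/C}(p)$, while every face has surface diameter and area bounded by a constant $b=C(D)$, namely the diameter and area of the regular $D$-gon of side one.

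The heart of the argument is a two-sided comparison between vertex count and area. For the upper bound (vertices controlled by area) I would use that distinct vertices are $\ge C$-separated in $d$, so the balls $B_{\rho}^{S(G)}(x)$ with $\rho=C/2$ around vertices $x$ are pairwise disjoint; each such ball is isometric to a Euclidean cone ball of total angle $\Sigma_x$, and since $d_x\ge 3$ while every polygon angle is $\ge\pi/3$ we get $\Sigma_x\ge\pi$, whence $\mathcal{H}^2(B_\rho^{S(G)}(x))\ge \tfrac12\pi\rho^2$. Summing over $x\in B_R(p)$ and using $B_\rho^{S(G)}(x)\subset B_{R+\rho}^{S(G)}(p)$ gives $\sharp B_R(p)\le \tfrac{2}{\pi\rho^2}\mathcal{H}^2(B_{R+\rho}^{S(G)}(p))$. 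For the lower bound (area controlled by vertices) I would take the union $\Omega_r$ of all faces incident to $B_r(p)$: there are at most $6\,\sharp B_r(p)$ of them (nonnegative curvature forces $d_x\le 6$), each of area $\le b$, so $\mathcal{H}^2(\Omega_r)\le 6b\,\sharp B_r(p)$; conversely a point of $B_{Cr-b}^{S(G)}(p)$ lies in a face all of whose vertices are within graph distance $r$ of $p$, so $B_{Cr-b}^{S(G)}(p)\subset\Omega_r$ and therefore $\mathcal{H}^2(B_{Cr-b}^{S(G)}(p))\le 6b\,\sharp B_r(p)$.

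Chaining these two estimates through the surface relative volume comparison (\ref{RVCA1}) yields
\[
\frac{\sharp B_R(p)}{\sharp B_r(p)}\le 6b\cdot\tfrac{2}{\pi\rho^2}\cdot\frac{\mathcal{H}^2(B_{R+\rho}^{S(G)}(p))}{\mathcal{H}^2(B_{Cr-b}^{S(G)}(p))}\le C(D)\Big(\frac{R+\rho}{Cr-b}\Big)^2,
\]
which is the desired $C(D)(R/r)^2$ once $r$ exceeds a $D$-dependent threshold (so that $Cr-b\ge\tfrac12 Cr$) and $R\ge\rho$. The main obstacle I anticipate is exactly this additive mismatch of radii — the shifts $+\rho$ and $-b$ produced by crossing between $G$ and $S(G)$ — which blocks a clean same-radius comparison. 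I would absorb it for large radii using the doubling already built into (\ref{RVCA1}), and dispatch the remaining small-radius regime separately: there $\sharp B_r(p)\ge 1$, while the surface quadratic growth (\ref{VGA1}) together with the vertex-versus-area upper bound gives $\sharp B_R(p)\le C(D)R^2$, and these two facts yield (\ref{GRV1}) directly. Finally the equivalence $3\,\sharp B_R(p)\le|B_R(p)|\le 6\,\sharp B_R(p)$ transfers the statement back from vertex counts to the volumes $|B_R(p)|$.
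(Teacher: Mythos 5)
Your proposal is correct and follows essentially the same route as the paper's proof: both reduce \eqref{GRV1} to the Bishop--Gromov comparison \eqref{RVCA1} on $S(G)$ via a two-sided comparison between the discrete volume and $\mathcal{H}^2$ of surface balls with additively shifted radii (using Lemma \ref{LEM1} together with the bounded diameter and area of faces), then absorb the radius shifts for $r$ above a $D$-dependent threshold and treat small radii separately via the quadratic growth \eqref{VGA1}. The only real difference is your upper bound $\sharp B_R(p)\lesssim \mathcal{H}^2\bigl(B^{S(G)}_{R+\rho}(p)\bigr)$, obtained by packing disjoint cone-sectors of radius $\rho=C/2$ around vertices using $\Sigma_x\geq\pi$ (which even yields an absolute constant in that step), whereas the paper counts the faces meeting $B_R(p)$ and uses that each face has area bounded below by a universal constant; both work equally well and the lower bound halves of the two arguments coincide.
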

\begin{proof} We denote $B_R:=B_R(p)$ and $B_R^{S}:=B_R^{S(G)}(p)$ for short. By Lemma \ref{LEM1}, we have $B_{CR}^S\cap G\subset B_R\subset B_{R}^S\cap G.$ For any $\sigma\in F,$ $C_1\leq|\sigma|:=\mathcal{H}^2(\sigma)\leq C_2(D).$ Let $H_R:=\{\sigma\in F:\sigma \cap B_R\neq\emptyset\}$ denote the faces attached to $B_R.$ Then
\begin{equation}\label{GRVP1}|B_R|=\sum_{x\in B_R}d_x\leq D\cdot\sharp H_R,\end{equation} where $\sharp H_R$ is the number of faces in $H_R.$
For any $\sigma\in F,$ since the intrinsic diameter of $\sigma$ is
bounded, i.e. $\diam\ \sigma:=\sup\{d(x,y): x, y\in \sigma\}\leq D,$
we have for any face $\sigma\in H_R$
$$\sigma\subset B_{R+\diam \sigma}^S\subset B_{R+D}^S.$$ Hence it follows that
\begin{equation}\label{GRVP2}C_1\sharp H_R\leq\sum_{\sigma\in H_R}|\sigma|\leq|B_{R+D}^S|.\end{equation} By the volume comparison of $S(G)$ (\ref{VGA1}), (\ref{GRVP1}) and (\ref{GRVP2}), we obtain
\begin{equation}\label{GRVP3}|B_R|\leq C(D)|B_{R+D}^S|\leq C(D)(R+D)^2.\end{equation}
For $R\geq D,$ we have $|B_R|\leq 4C(D)R^2.$ For $1\leq R<D,$ we
have $|B_R|\leq C(D)R^2$ by \eqref{GRVP3}. Hence, for any $R\geq1,$
the quadratic volume growth property follows
\begin{equation}\label{VGG1}|B_R|\leq C(D)R^2.\end{equation}

For any $r>\frac{D}{C},$ where C is the constant in Lemma
\ref{LEM1}, let $r'=Cr-D.$ We denote by $W_r:=\{\sigma\in F:
\sigma\cap B_{r'}^S\neq\emptyset\}$ the faces attached to
$B_{r'}^S,$ and by $\overline{W_{r}}:=\bigcup_{\sigma\in W_r}\sigma$
the union of faces in $W_r$. For any vertex $x\in
\overline{W_{r}}\cap G,$ there exists a $\sigma\in W_r$ such that
$x\in\sigma$ and $d(p,x)\leq r'+\mathrm{diam} \sigma\leq r'+D=Cr.$
By Lemma \ref{LEM1}, we have $d^G(p,x)\leq C^{-1}d(p,x)\leq r,$
which implies that $\overline{W_{r}}\cap G\subset B_r.$ It is easy
to see that
\begin{equation}\label{GRVP4}|B_{r'}^S|\leq |\overline{W_{r}}|=\sum_{\sigma\in W_r}|\sigma|\leq C_2(D) \sharp W_r,\end{equation} where $\sharp W_r$ is the number of faces in $W_r.$
Moreover, by $3\leq \deg(\sigma)\leq D$ for any $\sigma\in F,$
\begin{equation}\label{GRVP5}3\sharp W_r\leq\sum_{\sigma\in W_r}\deg(\sigma)\leq\sum_{x\in \overline{W_r}\cap G} d_x\leq 6\sharp (\overline{W_r}\cap G),\end{equation} where $\sharp (\overline{W_r}\cap G)$ is the number of vertices in $\overline{W_r}\cap G.$

Hence by (\ref{GRVP4}) and (\ref{GRVP5}), we have
\begin{equation}\label{GRVP6}|B_{r'}^S|\leq C(D)\sharp (\overline{W_r}\cap G)\leq C(D)\sharp B_r\leq C(D)|B_r|.\end{equation}

By the relative volume comparison (\ref{RVCA1}), (\ref{GRVP3}) and
(\ref{GRVP6}), we obtain that for any $r>\frac{D}{C},$
$$\frac{|B_R|}{|B_r|}\leq C(D) \frac{|B_{R+D}^S|}{|B_{r'}^S|}\leq C(D)\left(\frac{R+D}{Cr-D}\right)^2.$$

Let $r_0(D):=\frac{2D}{C}.$ For $r_0(D)\leq r<R<\infty,$
$r-\frac{D}{C}\geq \frac{r}{2}$ and $R+D\leq 2R,$ so that we have
\begin{equation}\label{GRVP7}\frac{|B_R|}{|B_r|}\leq C(D)\left(\frac{R}{r}\right)^2.\end{equation}

For $0<r<R\leq r_0(D),$ by (\ref{VGG1}), we have
\begin{equation}\label{GRVP8}\frac{|B_R|}{|B_r|}\leq\frac{|B_{r_0(D)}|}{|B_0|}\leq \frac{1}{3}C(D)r_0^2(D)\leq C(D)\left(\frac{R}{r}\right)^2.\end{equation}

For $0<r<r_0(D)<R,$ by (\ref{VGG1}), we have
\begin{equation}\label{GRVP9}\frac{|B_R|}{|B_r|}\leq\frac{C(D)R^2}{|B_0|}\leq C(D)r^2\left(\frac{R}{r}\right)^2\leq C(D)r_0^2(D)\left(\frac{R}{r}\right)^2.\end{equation}

Hence it follows from (\ref{GRVP7}), (\ref{GRVP8}) and (\ref{GRVP9})
that for any $0<r<R,$

$$\frac{|B_R|}{|B_r|}\leq C(D)\left(\frac{R}{r}\right)^2.$$

\end{proof}

From the relative volume comparison, it is easy to obtain the volume
doubling property.

\begin{corollary}Let $G$ be a semiplanar graph with $\Sec (G)\geq0$. Then there exists a constant $C(D)$ depending on $D,$ such
that for any $p\in G$ and $R>0,$ we have
\begin{equation}\label{GVDP}|B_{2R}(p)|\leq C(D)|B_R(p)|.\end{equation}
\end{corollary}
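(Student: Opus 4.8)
The plan is to deduce the volume doubling property directly from the relative volume comparison already established in Theorem \ref{GRV}, since doubling is nothing more than the special case of relative volume comparison in which the inner and outer radii differ by the fixed factor two. No new geometric input is needed; the whole content has been packaged into (\ref{GRV1}).

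Concretely, I would fix $p\in G$ and $R>0$, and apply inequality (\ref{GRV1}) with the inner radius taken to be $R$ and the outer radius taken to be $2R$. Since $0<R<2R$, the hypothesis of Theorem \ref{GRV} is satisfied after the obvious relabeling, and the comparison yields
$$\frac{|B_{2R}(p)|}{|B_R(p)|}\leq C(D)\left(\frac{2R}{R}\right)^2=4\,C(D).$$
The ratio $(2R/R)^2=4$ is an absolute constant, independent of $p$, $R$, and $D$, so it may be absorbed into $C(D)$ in accordance with the stated convention that the constants $C(D)$ may change from line to line. This gives $|B_{2R}(p)|\leq C(D)|B_R(p)|$, which is exactly (\ref{GVDP}).

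There is no real obstacle in this step: all the work lies in the relative volume comparison, whose proof in Theorem \ref{GRV} passes through the bi-Lipschitz equivalence of Lemma \ref{LEM1}, the bounds $C_1\leq|\sigma|\leq C_2(D)$ on facial areas, and the Bishop--Gromov comparison (\ref{VGA1}) on $S(G)$. The only point requiring a moment of care is the bookkeeping of which quantity is the ``inner'' and which the ``outer'' radius when invoking (\ref{GRV1}); once the radii are specialized to $R$ and $2R$, the corollary is immediate.
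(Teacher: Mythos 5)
Your proposal is correct and matches the paper's own (one-line) derivation: the corollary is stated as an immediate consequence of the relative volume comparison (\ref{GRV1}), obtained exactly by taking inner radius $R$ and outer radius $2R$ and absorbing the factor $4$ into $C(D)$. Nothing further is needed.
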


In the rest of this section, we shall prove the Poincar\'e
inequality on a semiplanar graph with nonnegative curvature.

\begin{theorem}\label{GPIT} Let $G$ be a semiplanar graph with $\Sec (G)\geq 0$. Then there exist two constants $C(D)$ and $C$ such that for any $p\in G, R>0, f:B_{CR}(p)\rightarrow \mathds{R}$, we have
\begin{equation}\label{GPI}\sum_{x\in B_R(p)}(f(x)-f_{B_R})^2d_x\leq C(D)R^2\sum_{x,y\in B_{CR}(p);x\sim y}(f(x)-f(y))^2,\end{equation}
where $f_{B_R}=\frac{1}{|B_R(p)|}\sum_{x\in B_R(p)}f(x)d_x$, and
$x\sim y$ means $x$ and $y$ are neighbors.
\end{theorem}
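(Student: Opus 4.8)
The plan is to lift the discrete inequality to the Alexandrov space $S(G)$, where the continuous Poincar\'e inequality (\ref{PIA1}) is available with $n=2$, and then to push the resulting inequality back down to the graph via the bi-Lipschitz equivalence of Lemma \ref{LEM1}. Given $f:B_{CR}(p)\to\mathds{R}$, I would first build an extension $\tilde f$ on $S(G)$: linear interpolation along each edge, and on each face the controlled-energy extension of Lemma \ref{EX1}. The key output of that lemma is the facewise energy bound
\[
\int_{\sigma}|\nabla \tilde f|^2 \le C(D)\!\!\sum_{\substack{x,y\in\sigma\\ x\sim y}}\!\!(f(x)-f(y))^2 ,
\]
valid on every face $\sigma$ with $\deg(\sigma)\le D$; summing over the faces meeting the relevant surface ball gives
\[
\int_{B_{R}^{S(G)}(p)}|\nabla\tilde f|^2 \le C(D)\!\!\sum_{\substack{x,y\in B_{CR}(p)\\ x\sim y}}\!\!(f(x)-f(y))^2 .
\]
The enlargement constant $C$ is fixed, using Lemma \ref{LEM1} together with the facial diameter bound $\diam\sigma\le D$, so that every vertex of a face meeting $B_R^{S(G)}(p)$ lies in $B_{CR}(p)$; since $B_R(p)=\{p\}$ whenever $R<1$, making (\ref{GPI}) trivial, I may assume $R\ge 1$ throughout.

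Next I would apply the Alexandrov Poincar\'e inequality (\ref{PIA1}) to $\tilde f$ on a slightly enlarged surface ball, obtaining $\int|\tilde f-\tilde f_{B}|^2\le C R^2\int|\nabla\tilde f|^2$, and then convert each side to its graph counterpart. For the left-hand side I exploit that the weighted mean $f_{B_R}$ minimizes $c\mapsto\sum_{x\in B_R}(f(x)-c)^2 d_x$, so I may replace $f_{B_R}$ by the continuous average $\bar f:=\tilde f_{B}$ without increasing it. It then remains to dominate $\sum_{x\in B_R}(f(x)-\bar f)^2 d_x$ by $C(D)\int|\tilde f-\bar f|^2$ plus a multiple of the energy: around each vertex $x$ I take its star $U_x$ of incident faces (whose area is bounded above and below by constants depending on $D$, with $d_x\le 6$), and estimate $(f(x)-\bar f)^2\le 2(\tilde f(z)-\bar f)^2+2(\tilde f(z)-f(x))^2$ for $z\in U_x$. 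Integrating in $z$ and using a pinned Poincar\'e (Morrey-type) estimate $\int_{U_x}(\tilde f-f(x))^2\le C(D)\int_{U_x}|\nabla\tilde f|^2$ on the cone $U_x$ (where $\tilde f(x)=f(x)$), then summing with bounded overlap (each face belongs to at most $D$ stars), yields the required domination. Since $R\ge 1$, the extra $O(1)$ energy term is absorbed into $C(D)R^2\int|\nabla\tilde f|^2$; combining with the energy bound above gives (\ref{GPI}).

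The main obstacle is the energy-controlled extension of Lemma \ref{EX1}: on a regular $n$-gon ($n\le D$) with boundary data affine on each edge, one must produce an interior extension whose Dirichlet energy is bounded by $C(D)$ times the sum of squared differences of consecutive vertex values. The cleanest route is the harmonic (energy-minimizing) extension, for which the estimate follows from a trace/Poincar\'e inequality on the polygon whose constant degenerates as $n$ grows, which is exactly what forces the $D$-dependence; a barycentric radial extension from the center is an explicit alternative. The remaining subtlety is purely the bookkeeping of radii---matching the surface ball used in the Poincar\'e step with $B_R(p)$ on one side and $B_{CR}(p)$ on the other---which is handled uniformly by Lemma \ref{LEM1}, the bound $\diam\sigma\le D$, and, where convenient, the volume comparison of Theorem \ref{GRV}.
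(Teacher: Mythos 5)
Your overall strategy---extend $f$ to $S(G)$ edge-by-edge and face-by-face with controlled energy, apply the Alexandrov Poincar\'e inequality (\ref{PIA1}), and convert both sides back to the graph---is exactly the paper's. The energy (right-hand) side is handled the same way, via the first estimate of Lemma \ref{EX1} together with $\int_e(f_1)_{T}^2=(f(u)-f(v))^2$ from (\ref{ECTG2}). The divergence, and the gap, is in how you return the left-hand side to the vertices. Your ``pinned Poincar\'e (Morrey-type) estimate'' $\int_{U_x}(\tilde f-f(x))^2\le C(D)\int_{U_x}|\nabla\tilde f|^2$ is not a valid generic inequality in dimension two: a $W^{1,2}$ function on a $2$-dimensional cone is not controlled by its value at a single point (Morrey embedding requires $p>n$), so pinning at the apex $x$ proves nothing for a general Sobolev function. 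The inequality does hold for your particular $\tilde f$, because on each face the extension depends linearly on the finitely many ($\le D$) vertex values and both sides become quadratic forms on that finite-dimensional space with the same kernel (all vertex values equal), but that is an additional compactness argument you would have to supply, with a constant tracked uniformly in $n\le D$. The paper avoids the issue entirely by never evaluating at a point: it passes from vertex values to edge integrals via $\tfrac16(f(u)^2+f(v)^2)\le\int_ef_1^2$ (formula (\ref{ECTG1})), and then from the boundary integral $\int_{\partial\sigma}(f_1-c_R)^2$ to the interior integral via the trace-type inequality in the second part of Lemma \ref{EX1}; that trace inequality is precisely the substitute for your pinned estimate.

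A second, more minor omission concerns the dilation constant. A face meeting $B_R^{S(G)}(p)$ has its vertices only within graph distance $C^{-1}(R+O(D))$ of $p$, which is not bounded by $CR$ with $C$ independent of $D$ unless $R\gtrsim D$. The theorem separates the absolute constant $C$ from $C(D)$, and the paper secures this by treating $1\le R\le r_0(D)$ with a different argument: the spectral gap bound $\lambda_1(G^R)\ge(\diam G^R\cdot \vol G^R)^{-1}\ge C(D)$ for the subgraph induced by $B_R$, which yields the Poincar\'e inequality on $B_R$ with no dilation at all. Your version, as written, only gives (\ref{GPI}) with a $D$-dependent dilation in that range; this is harmless for the later applications but weaker than the stated theorem.
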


For any function on a semiplanar graph $G$, $f:G\rightarrow
\mathds{R},$ we shall construct a local $W^{1,2}$ function, denoted
by $f_2,$ on $S(G)$ with controlled energy in two steps, and then by
the Poincar\'e inequality (\ref{PIA1}) on $S(G),$ we obtain the
Poincar\'e inequality on the graph $G.$ In step 1, by linear
interpolation, we extend $f$ to a piecewise linear function on
$G_1,$ $f_1:G_1\rightarrow \mathds{R}.$ In step 2, we extend $f_1$
to each face of $G.$ For any regular $n$-polygon $\triangle_n$ of
side length one, there is a bi-Lipschitz map
$$L_n:\triangle_n\rightarrow B_{r_n},$$ where $B_{r_n}$ is the
closed disk whose boundary is the circumscribed circle of
$\triangle_n$ of radius $r_n=\frac{1}{2\sin\frac{\alpha_n}{2}}$ (for
$\alpha_n=\frac{2\pi}{n}$). Without loss of generality, we may
assume that the origin $\underline{o}=(0,0)$ of $\mathds{R}^2$ is
the barycenter of $\triangle_n,$ the point $(x,y)=(r_n,0)\in
\mathds{R}^2$ is a vertex of $\triangle_n,$ and
$B_{r_n}=B_{r_n}(\underline{o}).$  Then in  polar coordinates, $L_n$
reads
$$L_n:\triangle_n \ni (r,\theta)\mapsto (\rho, \eta)\in B_{r_n}(\underline{o}),$$ where for $\theta\in[j\alpha_n,(j+1)\alpha_n],\ j=0,1,\cdots,n-1,$
$$\left\{
\begin{array}{ll}
\rho=\frac{r\cos\big(\theta-(2j+1)\frac{\alpha_n}{2}\big)}{\cos\frac{\alpha_n}{2}}&\\
\eta=\theta&
 \end{array}\right..$$ It maps the boundary of $\triangle_n$ to the boundary of $B_{r_n}(\underline{o})$. Direct calculation shows that $L_n$ is a
 bi-Lipschitz map, i.e. for any $x,y \in \triangle_n$ we have $C_1|x-y|\leq|L_nx-L_ny|\leq C_2|x-y|,$ where $C_1$ and $C_2$ do not depend on $n.$
Then for any $\sigma\in F,$ we denote $n:=\deg(\sigma).$ Let
$g:B_{r_n}(\underline{o})\rightarrow \mathds{R}$ satisfy the
following boundary value problem
$$\left\{
\begin{array}{ll}
\Delta g=0,&in\ \mathring{B}_{r_n}(\underline{o})\\
g|_{\partial B_{r_n}(\underline{o})}=f_1\circ L_n^{-1}&
\end{array}\right.,$$ where $\mathring{B}_{r_n}(\underline{o})$ is the open ball.
Then we define $f_2:S(G)\rightarrow \mathds{R}$ as
\begin{equation}\label{DFF2}f_2|_{\sigma}=g\circ L_n.\end{equation}
It can be shown that $f_2$ is local $W^{1,2}$ function on $S(G)$,
since the singular points of $S(G)$ are isolated (see \cite{KMS}).

We need to control the energy of $f_2$ by its boundary values. The
following lemma is standard. We denote by $B_1$ the closed unit disk
in $\mathds{R}^2.$

\begin{lemma}\label{B1ECL} For any Lipschitz function $h:\partial B_1\rightarrow \mathds{R},$ let $g:B_1\rightarrow\mathds{R}$ satisfy the following boundary value problem
$$\left\{
\begin{array}{ll}
\Delta g=0,&in\ \mathring{B}_1\\
g|_{\partial B_1}=h&
\end{array}\right..$$ Then we have
$$\int_{B_1}|\nabla g|^2\leq\int_{\partial B_1}h_{\theta}^2,$$
$$\int_{\partial B_1}h^2\leq C\left(\int_{B_1}g^2+\int_{\partial B_1}h_{\theta}^2\right),$$ where $h_{\theta}=\frac{\partial h}{\partial \theta}.$
\end{lemma}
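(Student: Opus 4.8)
The plan is to reduce everything to Fourier series on the circle together with the explicit power-series form of the harmonic extension. Parametrize $\partial B_1\cong\mathds{R}/2\pi\mathds{Z}$ by $\theta$ and write
\[
h(\theta)=\frac{a_0}{2}+\sum_{n\geq 1}\big(a_n\cos n\theta+b_n\sin n\theta\big).
\]
Since $h$ is Lipschitz its Fourier series is well behaved and $h_\theta=\sum_{n\geq1} n(-a_n\sin n\theta+b_n\cos n\theta)$ in $L^2(\partial B_1)$. The first step I would record is that the harmonic extension is obtained by separation of variables as
\[
g(r,\theta)=\frac{a_0}{2}+\sum_{n\geq1}r^n\big(a_n\cos n\theta+b_n\sin n\theta\big),
\]
since each $r^n\cos n\theta$ and $r^n\sin n\theta$ is harmonic, the series solves $\Delta g=0$ in $\mathring B_1$, and it restricts to $h$ on $\partial B_1$; by uniqueness this is the solution of the stated Dirichlet problem.

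The second step is to express the three quantities in the lemma as sums over the Fourier coefficients, using only the orthogonality relations $\int_0^{2\pi}\cos m\theta\cos n\theta\,d\theta=\pi\delta_{mn}$ (and the analogues for sines, the mixed terms vanishing). In polar coordinates $|\nabla g|^2=g_r^2+r^{-2}g_\theta^2$, so integrating over $\theta$ and then over $r$ against $r\,dr$ gives
\[
\int_{B_1}|\nabla g|^2=\pi\sum_{n\geq1}n\,(a_n^2+b_n^2),
\]
while on the boundary
\[
\int_{\partial B_1}h_\theta^2=\pi\sum_{n\geq1}n^2(a_n^2+b_n^2),\qquad
\int_{\partial B_1}h^2=\frac{\pi a_0^2}{2}+\pi\sum_{n\geq1}(a_n^2+b_n^2).
\]
The first inequality of the lemma is then immediate from $n\leq n^2$ for $n\geq1$, compared term by term.

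For the second inequality the only delicate point --- and the one I would flag as the main obstacle --- is the zero mode: $\int_{\partial B_1}h_\theta^2$ is blind to the constant term $a_0/2$, so by itself it cannot control $\int_{\partial B_1}h^2$. This is precisely what forces the $\int_{B_1}g^2$ term into the estimate. I would therefore compute, again by orthogonality and $\int_0^1 r^{2n+1}\,dr=\frac{1}{2n+2}$,
\[
\int_{B_1}g^2=\frac{\pi a_0^2}{4}+\pi\sum_{n\geq1}\frac{a_n^2+b_n^2}{2n+2},
\]
which yields $a_0^2\leq \frac{4}{\pi}\int_{B_1}g^2$. Combining this with $\sum_{n\geq1}(a_n^2+b_n^2)\leq\sum_{n\geq1}n^2(a_n^2+b_n^2)=\frac{1}{\pi}\int_{\partial B_1}h_\theta^2$ gives
\[
\int_{\partial B_1}h^2=\frac{\pi a_0^2}{2}+\pi\sum_{n\geq1}(a_n^2+b_n^2)\leq 2\int_{B_1}g^2+\int_{\partial B_1}h_\theta^2,
\]
so the claim holds with $C=2$. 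Everything else is routine bookkeeping of the orthogonality integrals; no genuine analytic difficulty arises once the zero mode is isolated as above.
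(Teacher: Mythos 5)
Your proof is correct, and the Fourier-series framework is exactly the one the paper uses; the verification of the first inequality is essentially identical (both reduce to the comparison $\sum n(a_n^2+b_n^2)\leq\sum n^2(a_n^2+b_n^2)$, you by integrating $|\nabla g|^2=g_r^2+r^{-2}g_\theta^2$ directly in polar coordinates, the paper via the identity $\Delta g^2=2|\nabla g|^2$ and integration by parts to the boundary term $\int_{\partial B_1}g\,g_r$). Where you genuinely diverge is the second inequality. The paper never computes $\int_{B_1}g^2$ spectrally: it writes $\int_{\partial B_1}h^2=\int_{\partial B_1}(h^2x)\cdot x=\int_{B_1}\nabla\cdot(g^2x)$, expands, applies Cauchy--Schwarz and the first inequality, and lands on the constant $C=3$. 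You instead expand $\int_{B_1}g^2$ in Fourier coefficients, isolate the zero mode via $a_0^2\leq\frac{4}{\pi}\int_{B_1}g^2$, and control the nonzero modes by $\int_{\partial B_1}h_\theta^2$, obtaining $C=2$. Your route makes completely transparent \emph{why} the $\int_{B_1}g^2$ term is unavoidable (the tangential derivative is blind to constants) and gives a marginally better constant; the paper's divergence-theorem identity is more robust in that it does not depend on having an explicit eigenfunction expansion of the solid integral and would carry over to higher-dimensional balls or less explicit settings with no extra work. Either argument suffices for the application, since the lemma is only used up to unspecified constants.
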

\begin{proof}
Let $\frac{1}{\sqrt{2\pi}}, \frac{\sin n\theta}{\sqrt{\pi}},
\frac{\cos n\theta}{\sqrt{\pi}}$ (for $n=1,2,\cdots$) be the
orthonormal basis of $L^2(\partial B_1).$ Then $h:\partial
B_1\rightarrow \mathds{R}$ can be represented in $L^2(\partial B_1)$
by
$$h(\theta)=a_0\frac{1}{\sqrt{2\pi}}+\sum_{n=1}^{\infty}\left( a_n\frac{\cos n\theta}{\sqrt{\pi}}+b_n\frac{\sin n\theta}{\sqrt{\pi}}\right).$$
So the harmonic function $g$ with boundary value $h$ is
$$g(r,\theta)=a_0\frac{1}{\sqrt{2\pi}}+\sum_{n=1}^{\infty}\left( a_nr^n\frac{\cos n\theta}{\sqrt{\pi}}+b_nr^n\frac{\sin n\theta}{\sqrt{\pi}}\right).$$

Since $\Delta g=0,$ we have $\Delta g^2=2|\nabla g|^2,$ then
$$\int_{B_1}|\nabla g|^2=\frac{1}{2}\int_{B_1}\Delta g^2=\frac{1}{2}\int_{\partial B_1}\frac{\partial g^2}{\partial r},$$ which follows from integration by parts.
So that $$\int_{B_1}|\nabla g|^2=\int_{\partial B_1}g
g_r=\sum_{n=1}^{\infty}n(a_n^2+b_n^2).$$ In addition,
$$\int_{\partial
B_1}h_{\theta}^2=\sum_{n=1}^{\infty}n^2(a_n^2+b_n^2). $$ Hence,
\begin{equation}\label{B1EC}\int_{B_1}|\nabla g|^2\leq\int_{\partial
B_1}h_{\theta}^2.\end{equation}

The second part of the theorem follows from an integration by parts
and the H\"older inequality.
\begin{eqnarray*}
\int_{\partial B_1}h^2&=&\int_{\partial B_1}(h^2 x)\cdot x=\int_{B_1}\nabla\cdot(g^2x)\\
&=&2\int_{B_1}g^2+2\int_{B_1}g\nabla g\cdot x\\
&\leq&2\int_{B_1}g^2+2\left(\int_{B_1}g^2\right)^{\frac{1}{2}}\left(\int_{B_1}|\nabla g|^2\right)^{\frac{1}{2}}\ \ \  (by\ |x|\leq1)\\
&\leq&3\int_{B_1}g^2+\int_{B_1}|\nabla g|^2\\
&\leq&3\int_{B_1}g^2+\int_{\partial B_1}h_{\theta}^2.\ \ \ \ \ \ \ \
\ \  \  \ \ \ \ \  \ \ \ \ \ \ \ \ (by\ (\ref{B1EC}))
\end{eqnarray*}
\end{proof}

Note that for the semiplanar graph $G$ with nonnegative curvature
and any face $\sigma=\triangle_n$ of $G$, we have $3\leq n\leq D,$
$\frac{1}{\sqrt3} \leq r_n=\frac{1}{2\sin\frac{\pi}{n}}\leq
\frac{1}{2\sin\frac{\pi}{D}}=C(D).$ Then the scaled version of Lemma
\ref{B1ECL} reads
\begin{lemma} For $3\leq n\leq D$, and any Lipschitz function $h:\partial B_{r_n}\rightarrow \mathds{R},$ we denote by $g$ the harmonic function satisfying the Dirichlet boundary value problem
$$\left\{
\begin{array}{ll}
\Delta g=0,&in\ \mathring{B}_{r_n}\\
g|_{\partial B_{r_n}}=h&
\end{array}\right..$$ Then it holds that
$$\int_{B_{r_n}}|\nabla g|^2\leq C(D)\int_{\partial B_{r_n}}h_T^2,$$
$$\int_{\partial B_{r_n}}h^2\leq C(D)\left(\int_{B_{r_n}}g^2+\int_{\partial B_{r_n}}h_T^2\right),$$ where $T=\frac{1}{r_n}\partial_{\theta}$ is the unit tangent vector on the boundary $\partial B_{r_n}$ and $h_T$ is the directional derivative of $h$ in $T.$
\end{lemma}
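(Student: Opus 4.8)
The plan is to derive both scaled estimates directly from the unit-disk version, Lemma \ref{B1ECL}, by a change of variables. The essential point is that in dimension two the Dirichlet integral is scale invariant, while the remaining three integrals pick up explicit powers of the circumradius $r_n$; these powers are then absorbed into the constant by means of the uniform two-sided bound $\frac{1}{\sqrt3}\le r_n\le\frac{1}{2\sin(\pi/D)}=C(D)$, valid for $3\le n\le D$.

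First I would rescale to the unit disk. Given a Lipschitz $h:\partial B_{r_n}\to\mathds{R}$ with harmonic extension $g$ on $B_{r_n}$, I set $\tilde g(y)=g(r_n y)$ for $y\in B_1$. Then $\tilde g$ is harmonic on $\mathring B_1$, and writing $h$ as a function of the angle $\theta$ its boundary trace $\tilde h(\theta):=\tilde g(\cos\theta,\sin\theta)$ satisfies $\tilde h(\theta)=h(\theta)$, hence $\tilde h_\theta=h_\theta$. Thus $(\tilde g,\tilde h)$ is precisely the pair to which Lemma \ref{B1ECL} applies, and I may use both of its inequalities.

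Next I would record how the four integrals transform under $x=r_n y$. Because $\nabla_y\tilde g=r_n(\nabla g)(r_n y)$ and $dy=r_n^{-2}dx$, the two powers of $r_n$ cancel and the energy is scale invariant: $\int_{B_1}|\nabla\tilde g|^2=\int_{B_{r_n}}|\nabla g|^2$. The area element gives $\int_{B_1}\tilde g^2=r_n^{-2}\int_{B_{r_n}}g^2$, while the arclength element $r_n\,d\theta$ and the relation $h_T=\frac{1}{r_n}h_\theta$ give the boundary identities $\int_{\partial B_1}\tilde h^2=r_n^{-1}\int_{\partial B_{r_n}}h^2$ and $\int_{\partial B_1}\tilde h_\theta^2=r_n\int_{\partial B_{r_n}}h_T^2$. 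Substituting these into the two inequalities of Lemma \ref{B1ECL} yields $\int_{B_{r_n}}|\nabla g|^2\le r_n\int_{\partial B_{r_n}}h_T^2$ and, after multiplying through by $r_n$, the estimate $\int_{\partial B_{r_n}}h^2\le C\big(r_n^{-1}\int_{B_{r_n}}g^2+r_n^2\int_{\partial B_{r_n}}h_T^2\big)$.

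Finally I would absorb the powers of $r_n$. Since $3\le n\le D$ forces $\frac{1}{\sqrt3}\le r_n\le C(D)$, each of $r_n$, $r_n^{-1}$ and $r_n^2$ is bounded by a constant depending only on $D$, and the two claimed inequalities follow immediately. I do not expect any genuine analytic obstacle here, as everything reduces to the already-established unit-disk case; the only care required is the bookkeeping of scaling exponents, namely the exact cancellation in the energy term and the opposite signs of the area-element and arclength-element factors, which must be tracked so that the final constant ends up depending on $D$ alone.
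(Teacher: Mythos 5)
Your proof is correct and is exactly the argument the paper intends: the paper simply states this lemma as ``the scaled version'' of Lemma \ref{B1ECL} without writing out the rescaling, and your change of variables $x=r_ny$, with the scale invariance of the Dirichlet energy and the factors $r_n^{-2}$, $r_n^{-1}$, $r_n$ for the other three integrals, fills in precisely those omitted details. The bookkeeping of the powers of $r_n$ and their absorption into $C(D)$ via $\frac{1}{\sqrt3}\leq r_n\leq\frac{1}{2\sin(\pi/D)}$ is accurate.
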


The following lemma follows from the bi-Lipschitz property of the
map $L_n:\triangle_n\rightarrow B_{r_n}.$
\begin{lemma}\label{EX1} Let $\sigma$ be a face of degree $n,$ i.e. $\sigma=\triangle_n,$ in a semiplanar graph $G.$ Let $f_2|_{\sigma}$ be constructed as (\ref{DFF2}), then we have
\begin{equation}\label{EC1}\int_{\triangle_n}|\nabla f_2|^2\leq C(D)\int_{\partial \triangle_n} (f_1)_{T_n}^2,\end{equation}
\begin{equation}\label{EC2}\int_{\partial \triangle_n}f_1^2\leq C(D)\left(\int_{\triangle_n}f_2^2+\int_{\partial \triangle_n}(f_1)_{T_n}^2\right),\end{equation} where $T_n$ is the unit tangent vector on the boundary $\partial \triangle_n$ and $(f_1)_{T_n}$ is the directional derivative of $f_1$ in $T_n.$
\end{lemma}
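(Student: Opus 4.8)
The plan is to transfer both inequalities from the regular polygon $\triangle_n$ to the disk $B_{r_n}$ through the bi-Lipschitz map $L_n$, where the harmonic estimates of the scaled version of Lemma \ref{B1ECL} are available, and then transfer the resulting disk estimates back. Recall that $f_2|_\sigma=g\circ L_n$ and $h:=f_1\circ L_n^{-1}$ is the boundary value of $g$ on $\partial B_{r_n}$; also that $L_n$ is bi-Lipschitz with constants $C_1,C_2$ independent of $n$, and that on the polygons occurring in $G$ the radius satisfies $\frac{1}{\sqrt3}\leq r_n\leq C(D)$. Throughout, the transfer facts to establish are that the interior energies $\int_{\triangle_n}|\nabla f_2|^2$ and $\int_{B_{r_n}}|\nabla g|^2$ are comparable, that the interior $L^2$-norms $\int_{\triangle_n}f_2^2$ and $\int_{B_{r_n}}g^2$ are comparable, and that the boundary integrals $\int_{\partial\triangle_n}(f_1)_{T_n}^2,\ \int_{\partial B_{r_n}}h_T^2$ and $\int_{\partial\triangle_n}f_1^2,\ \int_{\partial B_{r_n}}h^2$ are pairwise comparable, all with constants depending only on the bi-Lipschitz bounds, hence only on $D$.

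First I would prove (\ref{EC1}). Since $f_2=g\circ L_n$, the chain rule gives $\nabla f_2=(DL_n)^{\top}\big((\nabla g)\circ L_n\big)$, so the upper Lipschitz bound $\|DL_n\|\leq C_2$ yields $|\nabla f_2|^2\leq C_2^2\,|\nabla g|^2\circ L_n$ pointwise. Integrating over $\triangle_n$ and changing variables $y=L_n(x)$, the bi-Lipschitz property bounds the Jacobian $|\det DL_n|$ from below by a positive constant (in two dimensions this follows from the lower bound $C_1$ on the smaller singular value of $DL_n$), so $\int_{\triangle_n}|\nabla f_2|^2\leq C\int_{B_{r_n}}|\nabla g|^2$. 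The scaled Lemma \ref{B1ECL} then gives $\int_{B_{r_n}}|\nabla g|^2\leq C(D)\int_{\partial B_{r_n}}h_T^2$. It remains to compare the boundary Dirichlet energies: $L_n$ restricts to a homeomorphism $\partial\triangle_n\to\partial B_{r_n}$ preserving the angular coordinate ($\eta=\theta$), and the arc-length elements $ds$ on $\partial\triangle_n$ and $dt=r_n\,d\theta$ on $\partial B_{r_n}$ are both comparable to $d\theta$ uniformly in $n$; writing $h=f_1\circ L_n^{-1}$, the chain rule together with this comparability of arc-length speeds gives $\int_{\partial B_{r_n}}h_T^2\leq C(D)\int_{\partial\triangle_n}(f_1)_{T_n}^2$. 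Chaining the three bounds proves (\ref{EC1}).

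Next I would prove (\ref{EC2}) by running the second estimate of the scaled Lemma \ref{B1ECL} and transferring each term. First, $\int_{\partial\triangle_n}f_1^2\leq C(D)\int_{\partial B_{r_n}}h^2$ by the boundary change of variables, since the arc-length speeds are comparable. The scaled lemma then bounds $\int_{\partial B_{r_n}}h^2\leq C(D)\big(\int_{B_{r_n}}g^2+\int_{\partial B_{r_n}}h_T^2\big)$. For the interior term, changing variables $y=L_n(x)$ as above and using the two-sided Jacobian bound gives $\int_{B_{r_n}}g^2\leq C\int_{\triangle_n}f_2^2$; for the boundary term I reuse the comparison $\int_{\partial B_{r_n}}h_T^2\leq C(D)\int_{\partial\triangle_n}(f_1)_{T_n}^2$ from the previous step. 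Combining these yields (\ref{EC2}).

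The routine part is the chain-rule bookkeeping; the genuine point requiring care is that every constant must be uniform in $n$, equivalently depend only on $D$. This rests on two facts already built into the setup: the bi-Lipschitz constants $C_1,C_2$ of $L_n$ are independent of $n$, and the radius $r_n=\tfrac{1}{2\sin(\pi/n)}$ ranges over $[\tfrac{1}{\sqrt3},C(D)]$ for $3\leq n\leq D$, so the scaling factors relating $B_1$ and $B_{r_n}$ in the scaled Lemma \ref{B1ECL} are controlled by $D$ alone. The one subtlety I would check carefully is that the gradient transfer uses the full matrix $DL_n$, not merely $|\det DL_n|$: one needs the operator-norm bound for the pointwise estimate and the lower Jacobian bound for the change of variables, both of which are consequences of bi-Lipschitzness in the plane.
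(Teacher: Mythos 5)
Your proposal is correct and is exactly the argument the paper intends: the paper dispenses with Lemma \ref{EX1} in one sentence (``follows from the bi-Lipschitz property of the map $L_n$''), and your write-up supplies precisely the missing bookkeeping --- chain rule with the operator-norm bound on $DL_n$, change of variables with the Jacobian bounded below by $C_1^2$, boundary arc-length comparison, and the scaled version of Lemma \ref{B1ECL} --- with all constants controlled by $D$ since $3\leq n\leq D$.
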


Let $e\subset\triangle_n$ be an edge with two incident vertices, $u$
and $v$. By linear interpolation, we have
$$\int_ef_1^2=\int_0^1(tf(u)+(1-t)f(v))^2dt=\frac{1}{3}(f(u)^2+f(u)f(v)+f(v)^2),$$ hence
\begin{equation}\label{ECTG1}\frac{1}{6}(f(u)^2+f(v)^2)\leq\int_ef_1^2\leq\frac{1}{2}(f(u)^2+f(v)^2).\end{equation} In addition,
\begin{equation}\label{ECTG2}\int_e(f_1)_{T_n}^2=(f(u)-f(v))^2.\end{equation} Now we can prove the Poincar\'e inequality.

\begin{proof}[Proof of Theorem \ref{GPIT}] Let $B_R^{G_1}:=B_R^{G_1}(p)$ denote the closed geodesic ball in $G_1.$ We set the constant
$c_R=(f_2)_{B_{R+1+D}^S}=\frac{1}{|B_{R+1+D}^S|}\int
_{B_{R+1+D}^S}f_2.$ By the combinatorial relation of vertices and
edges and (\ref{ECTG1}), we have
\begin{equation}\label{PIP0}\sum_{x\in
B_R}(f(x)-c_R)^2d_x\leq \sum_{\substack{e=uv\in E\\
e\cap
B_R\neq\emptyset}}[(f(u)-c_R)^2+(f(v)-c_R)^2]\leq6\int_{B_{R+1}^{G_1}}(f_1-c_R)^2.\end{equation}
Let $W_{R+1}=\{\sigma\in F: \sigma\cap B_{R+1}^{G_1}\neq\emptyset\}$
and $\overline{W_{R+1}}=\bigcup_{\sigma\in W_{R+1}}\sigma.$ Since
$B_{R+1}^{G_1}\subset\bigcup_{\sigma\in W_{R+1}}\partial\sigma,$ we
have
\begin{eqnarray}\label{PIP1}
\int_{B_{R+1}^{G_1}}(f_1-c_R)^2&\leq&\sum_{\sigma\in W_{R+1}}\int_{\partial\sigma}(f_1-c_R)^2\\
&\leq& C(D)\sum_{\sigma\in
W_{R+1}}\left(\int_{\sigma}(f_2-c_R)^2+\int_{\partial
\sigma}(f_1)_T^2\right),\nonumber
\end{eqnarray}
where the last inequality follows from (\ref{EC2}). For any
$y\in\overline{W_{R+1}},$ since $\mathrm{diam}\ \sigma\leq
\deg(\sigma)\leq D$ for any $\sigma\in F,$ we have $d(p,y)\leq
R+1+D.$ It implies that $\overline{W_{R+1}}\subset B_{R+1+D}^S.$
Hence by (\ref{PIP1})
\begin{eqnarray}\label{PIP2}
\int_{B_{R+1}^{G_1}}(f_1-c_R)^2&\leq&C(D)\int_{B_{R+1+D}^S}(f_2-c_R)^2+C(D)\sum_{\sigma\in W_{R+1}}\int_{\partial\sigma}(f_1)_T^2\nonumber\\
&\leq& C(D)(R+1+D)^2\int_{B_{R+1+D}^S}|\nabla f_2|^2\\
&&+C(D)\sum_{\sigma\in W_{R+1}}\int_{\partial\sigma}(f_1)_T^2,
\nonumber\end{eqnarray} where we use the Poincar\'e inequality
(\ref{PIA1}).

Let $U_{R+1}:=\{\tau\in F:\tau\cap B_{R+1+D}^S\neq\emptyset\}.$
Since $B_{R+1}^{G_1}\subset B_{R+1}^S\subset B_{R+1+D}^S,$ we have
$W_{R+1}\subset U_{R+1}.$ By Lemma \ref{LEM1}, it follows that
\begin{equation}\label{PIP3}U_{R+1}\cap G_1\subset
B_{C^{-1}(R+1+2D)}^{G_1}.\end{equation} By (\ref{EC1}),
(\ref{ECTG2}), (\ref{PIP2}) and (\ref{PIP3}), we obtain that
\begin{eqnarray}\label{PIP4}
\int_{B_{R+1}^{G_1}}(f_1-c_R)^2&\leq& C(D)(R+1+D)^2\sum_{\tau\in U_{R+1}}\int_{\tau}|\nabla f_2|^2\nonumber\\
&&+C(D)\sum_{\sigma\in W_{R+1}}\int_{\partial\sigma}(f_1)_T^2\nonumber\\
&\leq&C(D)(R+1+D)^2\sum_{\tau\in U_{R+1}}\int_{\partial\tau}(f_1)_T^2\nonumber\\
&&+C(D)\sum_{\sigma\in W_{R+1}}\int_{\partial\sigma}(f_1)_T^2\nonumber\\
&\leq&C(D)(R+1+D)^2\sum_{\tau\in U_{R+1}}\int_{\partial\tau}(f_1)_T^2\nonumber\\
&\leq&C(D)(R+1+D)^2\displaystyle{\sum_{\substack{x,y\in
B_{C^{-1}(R+1+2D)}\\x\sim y}}(f(x)-f(y))^2}.
\end{eqnarray}

For $R\geq C^{-1}(1+2D)=:r_0(D),$ we have $C^{-1}(R+1+2D)\leq
(C^{-1}+1)R=C_1 R$ and $R+1+2D\leq 2R.$ Let
$f_{B_R}=\frac{1}{|B_R|}\sum_{x\in B_R}f(x)d_x,$ then by
(\ref{PIP0}) and (\ref{PIP4}) we obtain
\begin{equation}\label{PIP5}\sum_{x\in B_R}(f(x)-f_{B_R})^2d_x\leq\sum_{x\in B_R}(f(x)-c_R)^2d_x\leq C(D)R^2\displaystyle{\sum_{\substack{x,y\in B_{C_1R}\\x\sim y}}(f(x)-f(y))^2}.\end{equation}

For $1\leq R\leq r_0(D),$ let $G^R=(V^R, E^R)$ be the subgraph
induced by $B_R.$ For any $x\in G^R,$ we denote by $d_{x,G^R}$ the
degree of the vertex $x$ in $G^R.$ The volume of $G^R$ is defined as
$\vol G^R=\sum_{x\in G^R}d_{x,G^R}$ and the diameter of $G^R$ is
defined as $\mathrm{diam} G^R=\sup_{x,y\in G^R}d^{G^R}(x,y).$  Let
$\lambda_1(G^R)$ be the first nonzero eigenvalue of the Laplacian of
$G^R,$ then the Rayleigh principle implies that
$$\lambda_1(G^R)=\inf_{\substack{f:G^R\rightarrow \mathds{R}\\ f\neq\mathrm{const.}}}\frac{\sum_{x,y\in G^R;x\sim y}(f(x)-f(y))^2}{\sum_{x\in G^R}(f(x)-f_{G^R})^2d_{x,G^R}},$$ where
$f_{G^R}=\frac{1}{\vol G^R}\sum_{x\in G^R}f(x)d_{x,G^R}.$ We recall
a lower bound estimate for $\lambda_1(G^R)$ by the diameter and
volume of $G^R$ (see \cite{Chu}),
$$\lambda_1(G^R)\geq\frac{1}{\diam G^R\cdot \vol G^R}.$$
Since $3\leq d_x\leq 6,$ we have $\frac{1}{6}d_x\leq d_{x,G^R}\leq
d_x.$ It is easy to see that $\diam G^R\leq 2R$ and $\vol
G^R\leq|B_R|\leq C(D)R^2$ by (\ref{VGG1}). So that we have
$$\lambda_1(G^R)\geq\frac{1}{2R\cdot C(D)R^2}\geq\frac{1}{2r_0(D)\cdot C(D)r_0^2(D)}\geq C(D),$$ which implies that
$${\sum_{x\in G^R}(f(x)-f_{G^R})^2d_{x,G^R}}\leq C(D)\sum_{x,y\in G^R;x\sim y}(f(x)-f(y))^2,$$ for any $f: G^R\rightarrow \mathds{R}.$

Hence we obtain that
\begin{eqnarray}\label{PIP6}
\sum_{x\in B_R}(f(x)-f_{B_R})^2d_x&\leq&6\sum_{x\in G^R}(f(x)-f_{G^R})^2d_{x,G^R}\nonumber\\
&\leq&C(D)\sum_{x,y\in G^R;x\sim y}(f(x)-f(y))^2\nonumber\\
&\leq&C(D)R^2\sum_{x,y\in B_R;x\sim y}(f(x)-f(y))^2.
\end{eqnarray}

For $0<R<1,$ the Poincar\'e inequality (\ref{GPI}) is trivial. The
theorem is proved by (\ref{PIP5}) and (\ref{PIP6}).
\end{proof}

\section{Analysis on Semiplanar Graphs with Nonnegative Curvature}
In this section, we shall study the analytic consequences of the
volume doubling property and the Poincar\'e inequality.

In Riemannian manifolds, it is well known that the volume doubling
property and the Poincar\'e inequality are sufficient for the
Nash-Moser iteration which implies the Harnack inequality for
positive harmonic functions (see \cite{G1,S1}).

Let $G$ be a graph. For a function $f:G\rightarrow \mathds{R}$, the
Laplace operator $L$ is defined as
$$Lf(x)=\frac{1}{d_x}\sum_{y\sim x}(f(y)-f(x)).$$ The gradient of
$f$ is defined as $$|\nabla f|^2(x)=\sum_{y\sim x}(f(y)-f(x))^2.$$
Given a subset $\Omega\subset G,$ a function $f$ is called harmonic
(subharmonic, superharmonic) on $\Omega$ if $Lf(x)=0 (\geq0,\leq0)$
for any $x\in\Omega.$ We denote by $H^d(G)=\{f: Lf\equiv0,
|f(x)|\leq C(d^G(p,x)+1)^d\}$ the space of polynomial growth
harmonic functions of growth degree less than or equal to $d$ on
$G$.

It was proved by Delmotte \cite{D2} and Holopainen-Soardi \cite{HS}
independently that the Harnack inequality for positive harmonic
functions holds on graphs satisfying the volume doubling property
and the Poincar\'e inequality. Applying their results to our case,
we obtain the following theorem.

\begin{theorem}[\cite{D2,HS}] Let $G$ be a semiplanar graph
  with $\Sec (G)\geq0$. Then there exist constants
  $C_1 >1$, $C_2(D)<\infty$ such that for any $R>0,p\in G$ and any positive harmonic function $u$ on $B_{C_1R}(p),$ we have
\begin{equation}\label{HARNK}\max_{B_R(p)}u\leq C_2(D)\min_{B_R(p)}u.\end{equation}
\end{theorem}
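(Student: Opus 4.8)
The plan is to deduce the Harnack inequality directly from the abstract discrete potential theory of Delmotte \cite{D2} and Holopainen--Soardi \cite{HS}, whose main result states that on a graph of bounded geometry the conjunction of the volume doubling property and the Poincar\'e inequality implies the elliptic Harnack inequality for positive harmonic functions via a discrete Nash--Moser iteration. The entire task therefore reduces to verifying that a semiplanar graph $G$ with $\Sec (G)\geq 0$ falls within the scope of that framework and satisfies its two structural hypotheses, with all constants tracked to recover the stated dependence $C_2=C_2(D)$ and the absolute enlargement constant $C_1>1$.

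First I would record the bounded-geometry condition. Since $\Sec (G)\geq 0$ forces $3\leq d_x\leq 6$ at every vertex, the weighted graph $(G,m)$ with $m(x)=d_x$ and unit edge weights defines a reversible, uniformly elliptic nearest-neighbor random walk: the transition probabilities $p(x,y)=1/d_x$ for $x\sim y$ are bounded below by $1/6$, and $m$ is comparable to the counting measure since $3\sharp B_R(p)\leq |B_R(p)|\leq 6\sharp B_R(p)$. This is precisely the controlled-geometry setting required in \cite{D2,HS}. Next I would supply the two analytic inputs: the volume doubling property is exactly (\ref{GVDP}) with constant $C(D)$, and the Poincar\'e inequality is (\ref{GPI}) with the same dependence on $D$.

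One technical point to address is that (\ref{GPI}) is a \emph{weak} Poincar\'e inequality, in which the Dirichlet energy on the right is taken over the enlarged ball $B_{CR}(p)$ rather than $B_R(p)$. The framework of \cite{D2} is designed to accept weak Poincar\'e inequalities; moreover, under volume doubling a weak inequality self-improves to a strong one by a Jerison-type chaining argument, so no essential loss occurs and the enlargement factor is absorbed into the constant $C_1$. Because both (\ref{GVDP}) and (\ref{GPI}) have constants depending only on $D$, every constant produced by the iteration likewise depends only on $D$, which accounts for $C_2(D)$ in (\ref{HARNK}).

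The mechanism behind the cited theorem, which I would at most sketch, is the following: volume doubling together with the Poincar\'e inequality yields a family of Sobolev-type inequalities on balls, from which one derives a mean value inequality $\max_{B_R(p)}u\leq C(D)\,|B_{2R}(p)|^{-1}\sum_{x\in B_{2R}(p)}u(x)\,d_x$ for nonnegative subharmonic $u$ and a dual lower estimate for superharmonic functions; these are then combined by a cross-over (Bombieri--Giusti, or discrete John--Nirenberg) lemma to produce (\ref{HARNK}). The main obstacle, were one to reproduce the argument in full rather than cite it, is exactly this cross-over step, namely controlling $\log u$ in $BMO$ for a positive harmonic $u$ and feeding the resulting bound back into the Moser scheme. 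Since this is precisely what \cite{D2,HS} carry out under the hypotheses verified above, the theorem follows by direct application.
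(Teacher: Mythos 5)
Your proposal is correct and follows essentially the same route as the paper, which likewise obtains \eqref{HARNK} by verifying the volume doubling property \eqref{GVDP} and the Poincar\'e inequality \eqref{GPI} (with constants depending only on $D$) and then invoking the Harnack inequality of Delmotte and Holopainen--Soardi for graphs satisfying these two hypotheses. Your additional remarks on bounded geometry and the admissibility of the weak form of the Poincar\'e inequality are consistent with what the cited framework requires.
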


\begin{remark} \emph{In \cite{D3}, Delmotte obtained the parabolic
    Harnack inequality and the Gaussian estimate for the heat kernel
    which is stronger than the elliptic one of the preceding theorem.}
\end{remark}

In the Nash-Moser iteration, the mean value inequality for
nonnegative subharmonic functions is obtained (see \cite{CoG}).
Since the square of a harmonic function is subharmonic, we obtain
\begin{lemma}
Let $G$ be a semiplanar graph with $\Sec (G)\geq0$. Then there exist
two constants $C_1$ and $C_2(D)$ such that for any $R>0,p\in G,$ any
harmonic function $u$ on $B_{C_1R}(p),$ we have
\begin{equation}\label{MVI}u^2(p)\leq\frac{C_2(D)}{|B_{C_1R}(p)|}\sum_{x\in B_{C_1R}(p)}u^2(x)d_x.\end{equation}
\end{lemma}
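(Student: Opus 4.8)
The plan is to reduce the mean value inequality for the harmonic function $u$ to the corresponding inequality for the \emph{nonnegative subharmonic} function $u^2$, and then to feed the latter into the Moser iteration scheme, which is driven precisely by the two estimates already established for $G$: the volume doubling property \eqref{GVDP} and the Poincar\'e inequality \eqref{GPI}.

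First I would verify that $u^2$ is subharmonic whenever $u$ is harmonic. Starting from the harmonicity relation $\sum_{y\sim x}(u(y)-u(x))=0$ and splitting $u(y)^2-u(x)^2=(u(y)-u(x))^2+2u(x)(u(y)-u(x))$, a one-line computation gives
\[
L(u^2)(x)=\frac{1}{d_x}\sum_{y\sim x}(u(y)-u(x))^2+2u(x)\,Lu(x)=\frac{1}{d_x}|\nabla u|^2(x)\geq0,
\]
so $v:=u^2\geq0$ satisfies $Lv\geq0$ on $B_{C_1R}(p)$. This is the only genuine computation in the argument.

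Next I would invoke the mean value inequality for nonnegative subharmonic functions on graphs. By \cite{CoG}, on a graph satisfying the volume doubling property and the Poincar\'e inequality the Moser iteration yields a mean value inequality whose constant depends only on the doubling and Poincar\'e constants; explicitly, any nonnegative subharmonic $v$ on $B_{C_1R}(p)$ obeys $\sup_{B_R(p)}v\leq\frac{C_2(D)}{|B_{C_1R}(p)|}\sum_{x\in B_{C_1R}(p)}v(x)d_x$. Since in our setting both the doubling constant from \eqref{GVDP} and the Poincar\'e constant from \eqref{GPI} depend only on $D$, the resulting constant can be taken to be $C_2(D)$, with $C_1$ absolute. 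Finally, because $p\in B_R(p)$ we have $v(p)\leq\sup_{B_R(p)}v$; substituting $v=u^2$ yields the claimed inequality \eqref{MVI}.

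The substantive analytic content — the $L^2$-to-$L^\infty$ improvement via Caccioppoli-type estimates, the Sobolev inequality extracted by combining \eqref{GVDP} and \eqref{GPI}, and the iteration over a geometric sequence of radii — is entirely standard and is supplied by \cite{CoG}, so there is no real obstacle to overcome. The only point requiring care is bookkeeping: one must check that all hypotheses of the cited iteration theorem are satisfied for semiplanar graphs with $\Sec(G)\geq0$, and that the constants produced by the iteration inherit their dependence solely on $D$ from the doubling and Poincar\'e constants.
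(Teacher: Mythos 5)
Your proposal is correct and follows the same route as the paper: the paper's proof consists exactly of the two observations that $u^2$ is a nonnegative subharmonic function and that the mean value inequality for such functions follows from the volume doubling property and the Poincar\'e inequality via the Nash--Moser iteration of \cite{CoG}. Your explicit verification that $L(u^2)=\frac{1}{d_x}|\nabla u|^2+2u\,Lu\geq 0$ simply fills in a computation the paper leaves implicit.
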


The Liouville theorem for positive harmonic functions follows from
the Harnack inequality (see \cite{S2}).
\begin{theorem} Let $G$ be a semiplanar graph with $\Sec (G)\geq0.$ Then any positive harmonic function on $G$ must be constant. \end{theorem}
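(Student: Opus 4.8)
The plan is to deduce the Liouville property directly from the scale-invariant Harnack inequality (\ref{HARNK}), the decisive feature being that the Harnack constant $C_2(D)$ is independent of the radius $R$. First I would fix a positive harmonic function $u$ on $G$ together with a base point $p\in G$. Since $u>0$ and $Lu\equiv 0$ on all of $G$, the function $u$ is in particular positive and harmonic on every ball $B_{C_1R}(p)$, so (\ref{HARNK}) applies for each $R>0$ with one and the same constant, giving
$$\max_{B_R(p)}u\leq C_2(D)\,\min_{B_R(p)}u.$$

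Next I would exploit the infimum of $u$. Set $m=\inf_{x\in G}u(x)\geq 0$, and for each $\epsilon>0$ consider the shifted function $v_\epsilon=u-m+\epsilon$. Since $L$ annihilates constants, $v_\epsilon$ is again harmonic, and by construction it is strictly positive on $G$, so (\ref{HARNK}) yields
$$\max_{B_R(p)}v_\epsilon\leq C_2(D)\,\min_{B_R(p)}v_\epsilon.$$
As $R\to\infty$ the balls $B_R(p)$ exhaust $G$, so that $\max_{B_R(p)}v_\epsilon\uparrow\sup_G v_\epsilon=\sup_G u-m+\epsilon$ while $\min_{B_R(p)}v_\epsilon\downarrow\inf_G v_\epsilon=\epsilon$. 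Passing to the limit $R\to\infty$ in the displayed inequality therefore produces
$$\sup_G u-m+\epsilon\leq C_2(D)\,\epsilon,\qquad\text{i.e.}\qquad \sup_G u-m\leq\bigl(C_2(D)-1\bigr)\epsilon.$$

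Since $\epsilon>0$ is arbitrary and $C_2(D)$ is a fixed finite constant, letting $\epsilon\to 0$ forces $\sup_G u=m=\inf_G u$, so $u$ is constant, as claimed. The only point demanding genuine care is the uniformity of $C_2(D)$ across all scales $R$, and this is exactly what globally nonnegative curvature provides: the volume doubling property (\ref{GVDP}) and the Poincar\'e inequality (\ref{GPI}) hold at every scale with constants depending only on $D$, which is what makes the Harnack constant produced by the Moser iteration scale-invariant. A merely local Harnack inequality would be insufficient here, since its constant could blow up as $R\to\infty$ and the exhaustion argument above would break down.
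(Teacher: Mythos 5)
Your proof is correct and follows essentially the same route as the paper: apply the scale-invariant Harnack inequality to $u$ minus its global infimum and let $R\to\infty$ so that the minimum side tends to zero. Your $\epsilon$-shift $v_\epsilon=u-m+\epsilon$ is a slightly more careful variant (the paper applies Harnack directly to the merely nonnegative function $u-\inf_G u$), but the underlying argument is identical.
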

\begin{proof} Since $G$ be a semiplanar graph with $\Sec (G)\geq0,$ then $D_G<\infty.$ Let $u$ be a positive harmonic function on $G.$ By the Harnack inequality (\ref{HARNK}), we obtain \begin{equation}\label{POFL}\max_{B_R}(u-\inf_{G}u)\leq C_2(D_G)\min_{B_R}(u-\inf_Gu),\end{equation} for any $R>0.$ The right hand side of (\ref{POFL}) tends to $0$ if $R\rightarrow\infty$.
Hence, $$u\equiv\inf_Gu=\mathrm{const}.$$

\end{proof}

A manifold or a graph is called parabolic if it does not admit any
nontrivial positive superharmonic function. The parabolicity of a
manifold has been extensively studied in the literature (see
\cite{G2,HK,RSV}). In the graph setting, it is equivalent to the
fact that the simple random walk on the graph is recurrent, see e.g.
\cite{Woe1}. We already prove the quadratic volume growth,
\eqref{VGG1} in Theorem \ref{GRV}, of the semiplanar graph with
nonnegative curvature. Lemma 3.12 in \cite{Woe1} yields the
parabolicity of such graphs.

\begin{theorem} Any semiplanar graph $G$ with $\Sec (G)\geq 0$ is parabolic.
\end{theorem}

In the last part of the section, we investigate the polynomial
growth harmonic function theorem on  graphs. For Riemannian
manifolds, the polynomial growth harmonic function theorem was
proved by Colding and Minicozzi in \cite{CM2}. By assuming the
volume doubling property (\ref{GVDP}) and the Poincar\'e inequality
(\ref{GPI}) on the graph, Delmotte \cite{D1} proved the polynomial
growth harmonic function theorem with the dimension estimate in our
case
$$\dim H^d(G)\leq C(D)d^{v(D)},$$ where $C(D)$ and $v(D)$ depend on the maximal facial degree $D$ of the semiplanar graph $G$ with nonnegative curvature.
We improve Delmotte's dimension estimate of $H^d(G)$ by using the
relative volume comparison (\ref{GRV1}) instead of the volume
doubling property (\ref{GVDP}).
\begin{theorem}\label{PNG1T} Let $G$ be a semiplanar graph with $\Sec (G)\geq0.$ Then
\begin{equation}\label{PNG1}\dim H^d(G)\leq C(D)d^2,\end{equation} for any $d\geq1.$
\end{theorem}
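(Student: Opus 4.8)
The plan is to adapt the Colding--Minicozzi dimension-counting technique for polynomial growth harmonic functions to the graph setting, but to replace the volume doubling input used by Delmotte with the sharper relative volume comparison \eqref{GRV1} established in Theorem \ref{GRV}. The target is the optimal exponent $d^2$ (with constant depending on $D$), matching the Riemannian estimate in dimension two. The two analytic ingredients I would rely on are already in place: the mean value inequality \eqref{MVI} for harmonic functions, and the Poincar\'e inequality \eqref{GPI}. The combinatorial heart of the argument is a linear-algebra lemma that bounds the dimension of a finite-dimensional space of harmonic functions by testing their $L^2$-masses on annular regions against their Dirichlet energy.

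\textbf{Main steps.} First I would set up an inner product on a fixed finite-dimensional subspace $K\subset H^d(G)$ by integrating over a large ball: for $u,v\in K$ put $A_\rho(u,v)=\sum_{x\in B_\rho(p)}u(x)v(x)d_x$. For a $k$-dimensional subspace $K$ one has the standard Colding--Minicozzi estimate that for any $\beta>1$ there is a scale $\rho$ with $\det A_{\beta\rho}\ge \beta^{(2-\varepsilon)k}\det A_\rho$ failing only for controlled $k$; equivalently, one extracts a trace inequality relating $\sum_i A_{\beta\rho}(u_i,u_i)$ (for an $A_\rho$-orthonormal basis $u_i$) to the growth of the functions. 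The polynomial growth bound $|u(x)|\le C(d^G(p,x)+1)^d$ together with the quadratic volume growth \eqref{VGG1} gives a global bound on $A_\rho(u,u)$ growing like a fixed power of $\rho$, which caps how many doublings can each increase the determinant substantially. Second, I would use the mean value inequality \eqref{MVI} to convert pointwise/annular $L^2$ control into energy control, and the Poincar\'e inequality \eqref{GPI} to bound the energy of each basis function on $B_\rho$ by its oscillation on $B_{C\rho}$; this is where the relative volume comparison \eqref{GRV1} enters, since it lets one compare masses at scales $\rho$ and $C\rho$ with a ratio $C(D)(C)^2$ rather than an iterated-doubling bound, keeping the exponent quadratic in $d$ rather than polynomial of unknown degree $v(D)$.

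The quantitative core is the inequality, valid for an $A_\rho$-orthonormal basis $\{u_i\}_{i=1}^k$ of $K$,
\begin{equation}\label{CMtrace}
\sum_{i=1}^k A_{\beta\rho}(u_i,u_i)\le C(D)\,\beta^{2}\,k,
\end{equation}
from which the determinant-growth mechanism forces $k=\dim K\le C(D)d^2$ after optimizing over the scale parameter and the doubling factor $\beta$. To obtain \eqref{CMtrace} one writes each $u_i$ via its values on an annulus, applies \eqref{MVI} locally to dominate $u_i^2$ on $B_{\beta\rho}$ by an average of $u_i^2$ on a slightly larger ball, and then uses \eqref{GRV1} to bound that average by $C(D)\beta^2$ times the $A_\rho$-mass, which is $1$ by orthonormality. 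Summing over $i$ and using that $\{u_i\}$ is orthonormal on $B_\rho$ produces the factor $k$ on the right.

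\textbf{Main obstacle.} The hard part will be the annular/determinant step: making precise the claim that a space of dimension $k$ must, at \emph{some} dyadic scale, have its $L^2$-mass grow no faster than $\beta^{2}$ under doubling, and extracting from this the exponent $2$ rather than a larger power. This is exactly the Colding--Minicozzi determinant argument (as refined in \cite{CM3} and carried to the Alexandrov setting in \cite{Hu2}), and its delicate point here is that all geometric constants must be tracked to depend only on $D$ through \eqref{GRV1}, \eqref{MVI} and \eqref{GPI}, with the growth degree $d$ entering only through the polynomial bound defining $H^d(G)$. Once the scale-selection and trace estimate \eqref{CMtrace} are established uniformly in $k$, the dimension bound $\dim H^d(G)\le C(D)d^2$ follows by the same counting as in the Riemannian and Alexandrov cases, completing the proof.
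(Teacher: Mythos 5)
You have correctly identified the paper's strategy (the Colding--Minicozzi/Li mean-value-inequality argument, with the relative volume comparison \eqref{GRV1} replacing iterated doubling), but your quantitative core is misstated in two ways that break the proof. First, the direction of orthonormalization is reversed. Your trace inequality
\begin{equation*}
\sum_{i=1}^k A_{\beta\rho}(u_i,u_i)\le C(D)\,\beta^{2}\,k \qquad (\{u_i\}\ A_\rho\text{-orthonormal})
\end{equation*}
cannot be proved the way you describe, because the mean value inequality \eqref{MVI} bounds a pointwise value by an average over a \emph{larger} ball centered at that point; for $x$ near the boundary of $B_{\beta\rho}(p)$ that averaging ball is not contained in $B_\rho(p)$, so the average cannot be dominated by the $A_\rho$-mass, which is all the orthonormality gives you. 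Indeed the inequality is false already for $k=1$: a function in $H^d(G)$ may satisfy $A_{\beta\rho}(u,u)\approx\beta^{2d+2}A_\rho(u,u)$ (the graph analogue of a degree-$d$ harmonic polynomial), so no bound with exponent $2$ independent of $d$ can hold. The correct statement (Lemma \ref{PNGL2} of the paper) takes $\{u_i\}$ orthonormal with respect to the \emph{larger} ball, i.e.\ $A_{(1+\epsilon)R}$, and bounds $\sum_i A_R(u_i,u_i)\le C(D)\epsilon^{-2}$ on the smaller ball; there the averaging ball $B_{(1+\epsilon)R-r(x)}(x)$ is contained in $B_{(1+\epsilon)R}(p)$, and \eqref{GRV1} supplies the $\epsilon^{-2}$.

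Second, the factor $k$ on the right-hand side of your inequality makes the final count vacuous: comparing it with any lower bound of the form $k\cdot c(\beta,d)$ merely yields $c(\beta,d)\le C(D)\beta^2$, an inequality between constants, not a bound on $k$. The whole point of the mean-value lemma is that, after rotating the basis at each vertex $x$ so that only one basis element is nonzero at $x$, the sum $\sum_i u_i^2(x)$ collapses to a single term, and the resulting bound $C(D)\epsilon^{-2}$ carries \emph{no} factor of $k$. The factor $k$ must appear only on the other side, via the scale-selection lemma (Lemma \ref{PNGL1}), which you only allude to: for a suitable $R$ an $A_{(1+\epsilon)R}$-orthonormal basis satisfies $\sum_i A_R(u_i,u_i)\ge k(1+\epsilon)^{-(2d+2+\delta)}$, the polynomial growth and quadratic volume growth entering precisely here. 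Combining the two bounds and choosing $\epsilon=1/(2d)$ gives $k\le C(D)(2d)^2\bigl(1+\tfrac{1}{2d}\bigr)^{2d+2+\delta}\le C(D)d^2$. Without these two corrections your argument does not produce \eqref{PNG1}.
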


We will use the argument by the mean value inequality (see
\cite{CM4,L1}). From now on, we fix some vertex $p\in G,$ and denote
$B_R=B_R(p)$ for short. We need the following lemmas.
\begin{lemma}\label{Fdl} For any finite dimensional subspace $K\subset H^d(G),$ there exists a constant $R_0(K)$ depending on $K$ such that for any $R\geq R_0(K),$ $u,v\in K,$ $$A_R(u,v):=\sum_{x\in B_R}u(x)v(x)d_x$$ is an inner product on $K.$
\end{lemma}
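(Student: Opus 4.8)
The plan is to note that for every $R$ the form $A_R$ is automatically symmetric, bilinear and positive semidefinite on $K$, so the only point needing proof is that $A_R$ is positive-definite once $R$ is large, and moreover that a single threshold $R_0(K)$ works simultaneously for all of $K$. Since $A_R(u,u)=\sum_{x\in B_R}u(x)^2 d_x$ and $d_x\geq 3>0$, the equality $A_R(u,u)=0$ holds precisely when $u$ vanishes identically on $B_R$. I would therefore set
$$K_R:=\{u\in K:\ u|_{B_R}\equiv 0\}=\{u\in K:\ A_R(u,u)=0\},$$
which is the null space of $A_R$ restricted to $K$, and reduce the lemma to showing $K_R=\{0\}$ for all sufficiently large $R$.

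First I would observe that $\{K_R\}$ is a nested, non-increasing family of linear subspaces of $K$: if $R'\geq R$ then $B_R\subset B_{R'}$, so $u|_{B_{R'}}\equiv 0$ implies $u|_{B_R}\equiv 0$, whence $K_{R'}\subseteq K_R$. Next, since $G$ is connected and locally finite, the balls $B_R=B_R(p)$ exhaust $G$ as $R\to\infty$; consequently any $u\in\bigcap_{R>0}K_R$ vanishes at every vertex, so $\bigcap_{R>0}K_R=\{0\}$. Note that the only feature of $H^d(G)$ used here is that its members are functions on $G$; no property of harmonic functions, and in particular no unique continuation, is invoked.

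Finally I would use $\dim K<\infty$. The quantities $\dim K_R$ form a non-increasing, nonnegative integer-valued function of $R$, hence stabilize: there is $R_0(K)$ with $K_R=K_{R_0(K)}$ for all $R\geq R_0(K)$, and therefore $\bigcap_{R>0}K_R=\bigcap_{R\geq R_0(K)}K_R=K_{R_0(K)}$. Comparing with the previous step gives $K_{R_0(K)}=\{0\}$, so $K_R=\{0\}$ for every $R\geq R_0(K)$. For such $R$ the form $A_R$ is symmetric and bilinear and satisfies $A_R(u,u)=0\Longleftrightarrow u=0$ on $K$, hence is an inner product on $K$.

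Regarding the main obstacle: there is no genuine analytic difficulty, and the entire content is a dimension-counting argument. The one point that must be handled with care is the uniformity of the threshold over the infinitely many directions in $K\setminus\{0\}$. For a single fixed nonzero $u$ the form is positive on the line $\mathds{R}u$ as soon as $R\geq d^G(p,x)$ for some vertex $x$ with $u(x)\neq 0$; the possibility to be excluded is that such degenerate directions persist for arbitrarily large $R$. Exactly this is ruled out by the stabilization of the finite chain $\{\dim K_R\}$ together with the triviality of its total intersection.
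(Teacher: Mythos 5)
Your proof is correct. The paper itself only says the lemma ``is easily proved by a contradiction argument (see [Hu2])''; your direct version rests on exactly the same two ingredients that argument would use --- finite dimensionality of $K$ and the exhaustion of the connected, locally finite graph $G$ by the balls $B_R(p)$ --- merely reorganized as a stabilizing chain of null spaces $K_R$ instead of a normalization-and-compactness contradiction, so it is essentially the same approach and is complete as written.
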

\begin{proof} The lemma is easily proved by a contradiction argument (see \cite{Hu2}).
\end{proof}
\begin{lemma}\label{PNGL1} Let $G$ be a semiplanar graph with $\Sec (G)\geq0,$ $K$ be a $k$-dimensional subspace of $H^d(G).$ Given $\beta>1,\delta>0,$ for any $R_1\geq R_0(K)$ there exists $R>R_1$ such that if $\{u_i\}_{i=1}^k$ is an orthonormal basis of $K$ with respect to the inner product $A_{\beta R},$ then $$\sum_{i=1}^k A_R(u_i,u_i)\geq k\beta^{-(2d+2+\delta)}.$$

\end{lemma}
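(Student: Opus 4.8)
The plan is to convert the quantity $\sum_{i=1}^k A_R(u_i,u_i)$ into purely linear-algebraic data attached to the two inner products $A_R$ and $A_{\beta R}$, and then to play the polynomial growth of functions in $K$ against the quadratic volume growth (\ref{VGG1}). First I would fix any basis $\{v_1,\dots,v_k\}$ of $K$ and, for $R\ge R_0(K)$, set $(M_R)_{ij}=A_R(v_i,v_j)$, which is symmetric positive definite by Lemma \ref{Fdl}. If $\{u_i\}$ is $A_{\beta R}$-orthonormal and $u_i=\sum_j P_{ji}v_j$, then $P^TM_{\beta R}P=I$, hence $PP^T=M_{\beta R}^{-1}$ and
$$\sum_{i=1}^k A_R(u_i,u_i)=\mathrm{tr}(P^TM_RP)=\mathrm{tr}\big(M_{\beta R}^{-1}M_R\big).$$
In particular the left-hand side is independent of the chosen orthonormal basis, as the statement tacitly requires.

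Since $M_{\beta R}^{-1}M_R$ is similar to the symmetric positive definite matrix $M_{\beta R}^{-1/2}M_RM_{\beta R}^{-1/2}$, its eigenvalues are positive, and the arithmetic–geometric mean inequality gives
$$\sum_{i=1}^k A_R(u_i,u_i)=\mathrm{tr}\big(M_{\beta R}^{-1}M_R\big)\ge k\Big(\det\big(M_{\beta R}^{-1}M_R\big)\Big)^{1/k}=k\left(\frac{\det M_R}{\det M_{\beta R}}\right)^{1/k}.$$
Thus the lemma reduces to producing some $R>R_1$ with $\det M_{\beta R}/\det M_R\le\beta^{k(2d+2+\delta)}$.

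Next I would record the growth of the Gram determinant. For $u,v\in K\subset H^d(G)$ we have $|u(x)|,|v(x)|\le C(d^G(p,x)+1)^d$, so using the quadratic volume growth (\ref{VGG1}),
$$|A_R(u,v)|\le C\sum_{x\in B_R}(R+1)^{2d}d_x=C(R+1)^{2d}|B_R|\le C(D)(R+1)^{2d+2}.$$
Hence every entry of $M_R$ is $O(R^{2d+2})$, giving $\det M_R\le C_0R^{k(2d+2)}$ for all large $R$; moreover $B_R\subset B_{\beta R}$ makes $M_{\beta R}-M_R$ positive semidefinite, so $\det M_R$ is nondecreasing in $R$. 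The final step is a pigeonhole argument along the geometric sequence $\beta^jR_1$: if $\det M_{\beta^{j+1}R_1}/\det M_{\beta^jR_1}>\beta^{k(2d+2+\delta)}$ held for every $j\ge1$, then telescoping from $j=1$ to $m$ would give $\det M_{\beta^{m+1}R_1}/\det M_{\beta R_1}>\beta^{mk(2d+2+\delta)}$, which combined with $\det M_{\beta^{m+1}R_1}\le C_0(\beta^{m+1}R_1)^{k(2d+2)}$ and the positivity of $\det M_{\beta R_1}$ forces $\beta^{mk\delta}<C_1$ for a constant $C_1$ independent of $m$ — impossible as $m\to\infty$ since $\beta>1$, $k\ge1$, $\delta>0$. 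So some $R=\beta^jR_1>R_1$ does the job.

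The step I expect to demand the most care is the linear-algebra reduction in the first two paragraphs, namely the identity $\sum_i A_R(u_i,u_i)=\mathrm{tr}(M_{\beta R}^{-1}M_R)$ together with the AM–GM estimate, since this is precisely what turns the orthonormality hypothesis into a determinant ratio. The role of the extra $\delta$ is exactly to guarantee that the telescoped lower bound strictly outgrows the polynomial upper bound $C_0R^{k(2d+2)}$ on $\det M_R$, which is what makes the contradiction work.
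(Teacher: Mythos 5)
Your proof is correct and is precisely the standard Colding--Minicozzi determinant/pigeonhole argument that the paper invokes by simply citing \cite{L2,D1,Hu2} in place of a written proof: the trace identity $\sum_i A_R(u_i,u_i)=\mathrm{tr}(M_{\beta R}^{-1}M_R)$, the AM--GM reduction to the determinant ratio, the upper bound $\det M_R\leq C_0R^{k(2d+2)}$ coming from the quadratic volume growth \eqref{VGG1} combined with the degree-$d$ growth of elements of $K$, and the pigeonhole along $\beta^jR_1$. All the details you supply (invertibility of $M_{\beta R}$ via Lemma \ref{Fdl}, monotonicity of $\det M_R$, and the role of $\delta$ in forcing the contradiction) are the right ones, so there is nothing to add.
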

\begin{proof} The proof is same as \cite{L2,D1,Hu2}.
\end{proof}

\begin{lemma}\label{PNGL2} Let $G$ be a semiplanar graph with $\Sec (G)\geq0,$ $K$ be a $k$-dimensional subspace of $H^d(G).$ Then there exists a constant
$C(D)$ such that for any basis of $K,$ $\{u_i\}_{i=1}^k,$ $R>0,
0<\epsilon<\frac{1}{2},$ we have
$$\sum_{i=1}^kA_R(u_i,u_i)\leq C(D)\epsilon^{-2}\sup_{u\in <A,U>}\sum_{y\in B_{(1+\epsilon)R}}u^2(y)d_y,$$ where $<A,U>:=\{w=\sum_{i=1}^ka_iu_i:\sum_{i=1}^ka_i^2=1\}.$
\end{lemma}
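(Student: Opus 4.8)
The plan is to recognize the left-hand side as a weighted sum over $B_R$ of the ``diagonal'' quantity $\sum_{i=1}^k u_i(x)^2$ and to estimate this diagonal pointwise via the mean value inequality (\ref{MVI}). The starting point is purely linear-algebraic: for each fixed vertex $x$,
$$\sum_{i=1}^k u_i(x)^2 = \max_{\sum_i a_i^2 = 1}\Big(\sum_{i=1}^k a_i u_i(x)\Big)^2,$$
so there is a function $w_x \in \langle A,U\rangle$, depending on $x$, with $w_x(x)^2 = \sum_{i=1}^k u_i(x)^2$. Since every $u_i$ is harmonic, $w_x$ is harmonic on all of $G$; this is precisely what lets me feed it into the mean value inequality at any radius, with the harmonicity hypothesis automatically satisfied.

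First I would fix $x\in B_R$ and apply (\ref{MVI}) to $w_x$ with averaging ball $B_{\epsilon R}(x)$, obtaining $w_x(x)^2 \le \frac{C(D)}{|B_{\epsilon R}(x)|}\sum_{z\in B_{\epsilon R}(x)} w_x(z)^2 d_z$. Because $x\in B_R(p)$ and $0<\epsilon<\frac12$, the triangle inequality gives the inclusion $B_{\epsilon R}(x)\subset B_{(1+\epsilon)R}(p)$, so the right-hand sum is at most $\sum_{y\in B_{(1+\epsilon)R}} w_x(y)^2 d_y$, which in turn is dominated by $S:=\sup_{w\in\langle A,U\rangle}\sum_{y\in B_{(1+\epsilon)R}} w^2(y)d_y$, the supremum appearing in the statement. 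This produces the pointwise bound $\sum_{i=1}^k u_i(x)^2 \le C(D)\,|B_{\epsilon R}(x)|^{-1}\,S$, uniformly in the unknown maximizing direction $w_x$.

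The key remaining step, which I expect to be the crux, is to replace $|B_{\epsilon R}(x)|^{-1}$ by a uniform factor $\epsilon^{-2}|B_R|^{-1}$ so that the weighted sum over $x$ collapses. Here I would invoke the relative volume comparison (\ref{GRV1}) rather than merely the doubling property (\ref{GVDP}); this is exactly the source of the sharp power $\epsilon^{-2}$, and it is what the introduction flags as the improvement over Delmotte's estimate. For $x\in B_R(p)$ one has $B_R(p)\subset B_{2R}(x)$, hence $|B_R|\le |B_{2R}(x)|$, while (\ref{GRV1}) centered at $x$ (with $0<\epsilon R<2R$, legitimate since $\epsilon<\frac12$) gives $|B_{2R}(x)|\le C(D)(2/\epsilon)^2|B_{\epsilon R}(x)|$. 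Combining these two facts yields $|B_{\epsilon R}(x)|^{-1}\le C(D)\epsilon^{-2}|B_R|^{-1}$ for all $x\in B_R$.

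Finally I would assemble the pieces. Writing $\sum_{i=1}^k A_R(u_i,u_i)=\sum_{x\in B_R}\big(\sum_{i=1}^k u_i(x)^2\big)d_x$ and inserting the pointwise bound together with the volume estimate gives $\sum_{i=1}^k A_R(u_i,u_i)\le C(D)\epsilon^{-2}|B_R|^{-1} S \sum_{x\in B_R} d_x = C(D)\epsilon^{-2} S$, since $\sum_{x\in B_R} d_x = |B_R|$ by the definition of the volume. This is the claimed inequality. The only points demanding care are the two geometric inclusions $B_{\epsilon R}(x)\subset B_{(1+\epsilon)R}(p)$ and $B_R(p)\subset B_{2R}(x)$, and verifying that the radii fed into (\ref{GRV1}) satisfy $0<\epsilon R<2R$; everything else is bookkeeping with the constants $C(D)$, which may be absorbed from line to line.
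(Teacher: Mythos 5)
Your proposal is correct and follows essentially the same route as the paper: the variational identity $\sum_i u_i(x)^2=\max_{\sum a_i^2=1}(\sum_i a_iu_i(x))^2$ is exactly the paper's reduction via the codimension-one subspace $K_x$ and an orthonormal change of basis, and the subsequent steps (mean value inequality on a ball centered at $x$ contained in $B_{(1+\epsilon)R}(p)$, then the relative volume comparison \eqref{GRV1} against $B_{2R}(x)\supset B_R(p)$ to extract the factor $\epsilon^{-2}|B_R|^{-1}$) match the paper's argument, the only cosmetic difference being that you use the fixed radius $\epsilon R$ where the paper uses $(1+\epsilon)R-r(x)\ge\epsilon R$.
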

\begin{proof} For any $x\in B_R,$ we set $K_x=\{u\in K: u(x)=0\}.$ It
  is easy to see that $\dim K/K_x\leq1.$ Hence there exists an orthonormal linear transformation $\phi:K\rightarrow K,$ which maps $\{u_i\}_{i=1}^k$ to $\{v_i\}_{i=1}^k$ such that $v_i\in K_x,$ for $i\geq2.$ The mean value inequality (\ref{MVI}) implies that
\begin{eqnarray*}
\sum_{i=1}^ku_i^2(x)&=&\sum_{i=1}^kv_i^2(x)=v_1^2(x)\\
&\leq&C(D)|B_{(1+\epsilon)R-r(x)}(x)|^{-1}\sum_{y\in B_{(1+\epsilon)R-r(x)}(x)}v_1^2(y)d_y\\
&\leq&C(D)|B_{(1+\epsilon)R-r(x)}(x)|^{-1}\sup_{u\in
<A,U>}\sum_{y\in B_{(1+\epsilon)R}}u^2(y)d_y,
\end{eqnarray*} where $r(x)=d^G(p,x).$

By the relative volume comparison (\ref{GRV1}), we have
\begin{eqnarray*}|B_{(1+\epsilon)R-r(x)}|&\geq& C(D)\left(\frac{(1+\epsilon)R-r(x)}{2R}\right)^2|B_{2R}(x)|\\
&\geq&C(D)\left(\frac{(1+\epsilon)R-r(x)}{2R}\right)^2|B_{R}(p)|\geq
C(D)\epsilon^2|B_R|.
\end{eqnarray*}
Hence, by $3\leq d_x\leq6$ for any $x\in G,$
$$\sum_{i=1}^k\sum_{x\in B_R}u_i^2(x)d_x\leq 6\sum_{i=1}^k\sum_{x\in B_R}u_i^2(x)\leq C(D)\epsilon^{-2}\sup_{u\in <A,U>}\sum_{y\in B_{(1+\epsilon)R}}u^2(y)d_y.$$

\end{proof}

\begin{proof}[Proof of Theorem \ref{PNG1T}] For any $k$-dimensional subspace $K\subset H^d(G),$ we set $\beta=1+\epsilon.$ By Lemma \ref{PNGL1}, there exists $R>R_0(K)$ such that for any orthonormal basis $\{u_i\}_{i=1}^k$ of $K$ with respect to $A_{(1+\epsilon) R},$ we have
$$\sum_{i=1}^k A_R(u_i,u_i)\geq k(1+\epsilon)^{-(2d+2+\delta)}.$$
Lemma \ref{PNGL2} implies that $$\sum_{i=1}^k A_R(u_i,u_i)\leq
C(D)\epsilon^{-2}.$$ Setting $\epsilon=\frac{1}{2d},$ and letting
$\delta\rightarrow 0,$ we obtain
$$k\leq C(D)\left(\frac{1}{2d}\right)^{-2}\left(1+\frac{1}{2d}\right)^{2d+2+\delta}\leq C(D)d^2.$$
\end{proof}
The dimension estimate in (\ref{PNG1}) is not satisfactory since in
Riemannian geometry the constant $C(D)$  depends only on the
dimension of the manifold rather than the maximal facial degree of
$G$. Note that Theorem \ref{LFC} shows that the semiplanar graph $G$
with $\Sec (G)\geq0$ and $D_G\geq43$ has a special structure, i.e.
the one-side cylinder structure of linear volume growth. In
Riemannian geometry, Sormani \cite{Sor} used Yau's gradient estimate
and the nice behavior of the Busemann function on a one-end
Riemannian manifold with nonnegative Ricci curvature of linear
volume growth to show that it does not admit any nontrivial
polynomial growth harmonic function. Inspired by the work \cite{Sor}
and the special structure of semiplanar graphs with nonnegative
curvature and large face degree, we shall prove the following
theorem.

\begin{theorem}\label{PGTF} Let G be a semiplanar graph with $\Sec (G)\geq 0$ and $D_G\geq 43$. Then for any $d>0$,
$$\dim H^d(G)=1.$$
\end{theorem}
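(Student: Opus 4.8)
The plan is to prove that $\dim H^d(G)=1$ for a semiplanar graph $G$ with $\Sec(G)\geq 0$ and $D_G\geq 43$, by exploiting the rigid geometric structure guaranteed by Theorem~\ref{LFC}. Since constants are harmonic, $\dim H^d(G)\geq 1$ always, so the content is the reverse inequality: \emph{every} polynomial growth harmonic function on such a $G$ is constant. The key structural input is that when $D_G\geq 43$, the surface $S(G)$ (after possibly applying the inverse operation $P^{-1}$, which preserves $S(G)$ up to isometry and preserves harmonicity under a natural correspondence) decomposes as $\sigma\cup\bigcup_{m=1}^\infty L_m$, a single large face together with concentric layers $L_m$ each consisting of triangles or squares. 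This gives $G$ a one-ended, essentially one-dimensional ``half-cylinder'' geometry, and in particular \emph{linear} volume growth $|B_R(p)|\leq CR$ rather than merely quadratic. This is the discrete analogue of Sormani's setting in \cite{Sor}, so the strategy is to carry her argument into the combinatorial world.

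First I would establish the linear volume growth estimate $|B_R(p)|\leq CR$ directly from the layered structure of Theorem~\ref{LFC}: each layer $L_m$ wraps once around $\sigma$, the number of faces in each layer is bounded (controlled by the perimeter of $\sigma$, which stays essentially constant from layer to layer since each $L_m$ is of a single type and preserves the boundary edge-count), and the graph distance from $p$ grows roughly linearly in the layer index $m$. Hence $\sharp B_R(p)$ and $|B_R(p)|$ grow linearly in $R$. Next, combining linear growth with the quadratic bound already in hand, one gets that balls have ``small overlap'' and the annuli $B_{2R}\setminus B_R$ contain a bounded (in fact $O(R)$) amount of volume; the crucial consequence is that for a nonconstant harmonic function the Poincar\'e inequality (\ref{GPI}) forces the Dirichlet energy on dyadic annuli to decay, which under linear growth cannot be sustained by a genuinely polynomially-growing function.

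The heart of the proof is a discrete Caccioppoli/energy argument adapted to linear growth. For a harmonic $u\in H^d(G)$ I would test the equation $Lu=0$ against cutoff functions supported on annuli and use summation by parts to bound the gradient energy $\sum_{x\in B_R}|\nabla u|^2(x)$ by the oscillation of $u$ times the volume of a slightly larger annulus. Because the annular volume is $O(R)$ (linear), while $u$ grows at most like $R^d$, one obtains an energy bound of the form $\sum_{x\in B_R}|\nabla u|^2 d_x\leq C R^{2d-1}$ up to logarithmic factors; feeding this back through the Poincar\'e inequality (\ref{GPI}) and iterating over a dyadic sequence of radii produces a self-improving estimate showing the normalized oscillation of $u$ over $B_R$ tends to $0$. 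Equivalently, one adapts the Busemann-function argument: the half-cylinder has a single end along which a discrete Busemann function is asymptotically linear, and Yau-type gradient estimates (available here via the Harnack inequality (\ref{HARNK})) force any polynomial growth harmonic function to be constant along the axial direction, hence constant.

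The main obstacle I expect is making the discrete Busemann/gradient-estimate step rigorous on the singular polygonal structure, since the clean Riemannian gradient estimate of \cite{Sor} has no verbatim analogue on graphs; one must instead replace it by the mean value inequality (\ref{MVI}) together with the layer-by-layer control of $u$ across each $L_m$. Concretely, the delicate point is to show that the difference of the averages of $u$ over consecutive layers is summable, so that $u$ has a finite limit along the end and cannot grow. I would handle this by exploiting that each layer is a ``thin'' one-dimensional cyclic graph of bounded length, applying the one-dimensional Poincar\'e inequality within each layer to control tangential oscillation, and using harmonicity to convert radial flux into a telescoping sum; the linear volume growth is exactly what makes this telescoping sum convergent. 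Once $u$ is shown to have bounded oscillation on all of $G$, the Liouville theorem for bounded (indeed positive, after adding a constant) harmonic functions already proved above finishes the argument, giving $\dim H^d(G)=1$.
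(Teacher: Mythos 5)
Your high-level strategy is the same as the paper's (and as Sormani's): use Theorem~\ref{LFC} to get the layered half-cylinder structure, deduce linear volume growth $|B_r(x)|\leq C(D)r$, exploit the bounded ``cross-sections'' $\partial B_r(A)$, and reduce at the end to the Liouville theorem for positive harmonic functions. Those structural inputs are all correct, and your observation that each layer is a thin cyclic graph of bounded length is exactly the second essential ingredient. However, neither of the two concrete mechanisms you propose for the decisive quantitative step actually closes. The Poincar\'e route provably yields nothing: Caccioppoli plus linear volume gives $\sum_{B_R}|\nabla u|^2\leq CR^{2d-1}$, and feeding this into (\ref{GPI}) gives $\frac{1}{|B_R|}\sum_{B_R}(u-u_{B_R})^2\leq CR^{2d}$, which is perfectly consistent with $u$ growing like $R^d$ --- there is no self-improvement and no contradiction to iterate. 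The telescoping route begs the question: for a function genuinely growing like $r^d$ the differences of consecutive layer averages would be of order $r^{d-1}$, which is \emph{not} summable for $d\geq 1$, so ``the differences are summable'' is essentially equivalent to the conclusion you are trying to prove, and the zero-flux identity alone (without an a priori bound on tangential oscillation, which is again the quantity at stake) does not deliver it.

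The missing idea is that the gradient estimate must be used \emph{pointwise} and then \emph{chained along a path of bounded length inside a single level set}, rather than summed over balls or averaged over layers. Concretely, the paper combines the discrete Caccioppoli inequality of Lin--Xi with linear volume growth to get $|\nabla u|(x)\leq \frac{C(D)}{\sqrt r}\,\mathrm{osc}_{B_{6r}(x)}u$ (Corollary~\ref{WGEC}); the gain of $r^{-1/2}$ over the scale-invariant estimate comes precisely from $|B_{6r}(x)|\leq C(D)r$ instead of $r^2$. Then, since any two vertices of $\partial B_r(A)$ are joined inside $B_r(A)$ by a path of length at most $3D$ (independent of $r$), summing the pointwise bound over the at most $3D$ steps of that path gives $M(r):=\mathrm{osc}_{\partial B_r(A)}u\leq \frac{C(D)}{\sqrt r}M(9r)$. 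Iterating this for $r$ large enough that $\frac{C(D)}{\sqrt r}\leq \delta:=\frac{1}{2\cdot 9^d}$ and using $M(9^k r)\leq C(9^k r)^d$ forces $M(r)=0$. It is the bounded number of chaining steps that preserves the $r^{-1/2}$ gain; any argument that averages over a whole ball or a whole layer (as both of your proposed mechanisms do) loses exactly that gain and cannot beat polynomial growth.
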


To prove the theorem, we need a weak version of the gradient
estimate \cite{LX}. We recall the Cacciappoli inequality for
harmonic functions on the graph $G.$
\begin{theorem}[\cite{LX}]\label{LYT} Let $G$ be a graph and $d_m=\sup_{x\in G}d_x.$ For any harmonic function $u$ on $B_{6r},$ $r\geq1,$ we have
$$\sum_{x\in B_r}|\nabla u|^2(x)\leq\frac{C(d_m)}{r^2}\sum_{y\in B_{6r}}u^2(y)d_y,$$
Moreover for any $x\in B_r,$
\begin{equation}\label{LY}|\nabla u|^2(x)\leq\frac{C(d_m)}{r^2}\sum_{y\in B_{6r}}u^2(y)d_y.\end{equation}
\end{theorem}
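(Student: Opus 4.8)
The plan is to establish this discrete Caccioppoli (reverse Poincaré) inequality by the classical cut-off plus summation-by-parts argument, adapted to the combinatorial Laplacian $Lf(x)=\frac{1}{d_x}\sum_{y\sim x}(f(y)-f(x))$ and the gradient $|\nabla f|^2(x)=\sum_{y\sim x}(f(y)-f(x))^2$ used in this section. Assuming $d_m<\infty$, the graph is uniformly locally finite, so every ball is finite. First I would record the discrete integration-by-parts identity: for any finitely supported $g$ and any $u$,
$$\sum_{x}d_x\,g(x)\,Lu(x)=-\frac12\sum_{x\sim y}(g(x)-g(y))(u(x)-u(y)),$$
obtained by writing the left side as a sum over ordered adjacent pairs and symmetrizing under $x\leftrightarrow y$; the sum is finite (hence the relabeling legitimate) because $g$ has finite support.

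Next I would fix the cut-off $\phi(x)=\max\{0,\min\{1,(3r-d^{G}(p,x))/(2r)\}\}$, so that $0\le\phi\le1$, $\phi\equiv1$ on $B_r$, and $\mathrm{supp}\,\phi\subset B_{3r}$. Since $|d^{G}(p,x)-d^{G}(p,y)|\le1$ for $x\sim y$, this gives the key Lipschitz bound $|\phi(x)-\phi(y)|\le 1/(2r)$. Taking $g=\phi^2u$, every neighbour of $\mathrm{supp}\,\phi$ lies in $B_{3r+1}\subset B_{6r}$ because $r\ge1$, and there $u$ is harmonic; thus $g(x)\,Lu(x)=0$ for all $x$, so the left side of the identity vanishes. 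Applying the difference product rule
$$\phi^2(x)u(x)-\phi^2(y)u(y)=\frac{\phi^2(x)+\phi^2(y)}{2}(u(x)-u(y))+\frac{u(x)+u(y)}{2}(\phi^2(x)-\phi^2(y)),$$
the vanishing identity rearranges into
$$\sum_{x\sim y}\frac{\phi^2(x)+\phi^2(y)}{2}(u(x)-u(y))^2=-\sum_{x\sim y}\frac{u(x)+u(y)}{2}(\phi(x)-\phi(y))(\phi(x)+\phi(y))(u(x)-u(y)).$$

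I would then bound the right side by Young's inequality, steering the factor $(\phi(x)+\phi(y))\,|u(x)-u(y)|$ into the left-hand energy and keeping $(\phi(x)-\phi(y))\,(u(x)+u(y))$ as data. Using $(\phi(x)+\phi(y))^2\le 2(\phi^2(x)+\phi^2(y))$ with a small coupling constant, a positive multiple of the weighted energy survives on the left, while $|\phi(x)-\phi(y)|\le 1/(2r)$, the fact that all data edges lie in $B_{3r}$, and the exact combinatorial identity $\sum_{x\sim y}(u^2(x)+u^2(y))=\sum_{z}d_z\,u^2(z)$ yield
$$\sum_{x\sim y}\frac{\phi^2(x)+\phi^2(y)}{2}(u(x)-u(y))^2\le \frac{C}{r^2}\sum_{z\in B_{6r}}u^2(z)\,d_z.$$
Since $\phi\equiv1$ on $B_r$, every edge meeting $B_r$ carries weight at least $\tfrac12$ on the left, so $\sum_{x\in B_r}|\nabla u|^2(x)\le 4$ times the left side, which proves the first inequality with constant $C(d_m)$. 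The pointwise bound \eqref{LY} is then immediate, as $|\nabla u|^2(x)$ is a single nonnegative summand of $\sum_{x'\in B_r}|\nabla u|^2(x')$.

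The only delicate points are bookkeeping: verifying that the integration-by-parts identity carries no boundary contribution (secured by the finite support of $g=\phi^2 u$ together with harmonicity of $u$ on $B_{6r}\supset B_{3r+1}$), and performing the Young absorption so that a fixed positive fraction of $\sum_{x\sim y}(\phi^2(x)+\phi^2(y))(u(x)-u(y))^2$ remains on the left. The geometry of $G$ enters only through the maximal degree, which governs the passage between edge sums and vertex sums and accounts for the dependence $C(d_m)$. I expect the main obstacle to be purely notational, namely keeping the ordered-versus-unordered edge conventions and the attendant factors of $2$ consistent, rather than conceptual.
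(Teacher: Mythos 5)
Your proof is correct. Note, however, that the paper itself contains no proof of this statement: Theorem \ref{LYT} is quoted directly from \cite{LX}, so there is no internal argument to compare against, and your cut-off/Caccioppoli scheme is the standard (and essentially the original) route to this discrete reverse Poincar\'e inequality. All the key steps check out: the summation-by-parts identity with the factor $\tfrac12$ over ordered pairs is valid because $g=\phi^2u$ has finite support (balls are finite by local finiteness); harmonicity is only needed on $\mathrm{supp}\,\phi\subset B_{3r}$, and all edges meeting $\mathrm{supp}\,\phi$ lie in $B_{3r+1}\subset B_{6r}$ for $r\geq1$, so the generous radius $6r$ in the statement leaves you room to spare; the product rule, the Lipschitz bound $|\phi(x)-\phi(y)|\leq 1/(2r)$, and the Young absorption with $(\phi(x)+\phi(y))^2\leq 2(\phi^2(x)+\phi^2(y))$ all work as described; and the pointwise bound \eqref{LY} is, as you say, trivially a single summand of the summed bound. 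Two minor remarks. First, your argument actually yields an absolute constant: $d_m$ never genuinely enters, since the edge-to-vertex passage is the exact identity $\sum_{x\sim y}\bigl(u^2(x)+u^2(y)\bigr)=2\sum_z d_z u^2(z)$ (over ordered pairs) and the right-hand side of the inequality is already weighted by $d_y$; you therefore prove something slightly stronger than the stated $C(d_m)$, which is harmless (and irrelevant in this paper, where $3\leq d_x\leq 6$). Second, the only genuine bookkeeping issue is the one you flagged yourself: your integration-by-parts identity is written for ordered pairs (hence the $\tfrac12$), while the edge-to-vertex identity as you wrote it holds for unordered edges; the resulting factors of $2$ only affect the absolute constant.
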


\begin{corollary}\label{WGEC} Let $G$ be a semiplanar graph with $\Sec (G)\geq 0$ and $D_G\geq 43$. For any harmonic function $u$ on $G$, we have
\begin{equation}\label{WGE}|\nabla u|(x)\leq \frac{C(D)}{\sqrt r}\mathrm{osc}_{B_{6r}(x)}u,\end{equation} where $\mathrm{osc}_{B_{6r}(x)}u=\max_{B_{6r}(x)}u-\min_{B_{6r}(x)}u.$
\end{corollary}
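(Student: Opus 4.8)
The plan is to combine the pointwise Cacciopoli-type bound of Theorem \ref{LYT} with the \emph{linear} volume growth that the structure theorem, Theorem \ref{LFC}, forces once $D_G\geq 43$. First I would record that for a semiplanar graph with $\Sec(G)\geq 0$ the vertex degrees obey $3\leq d_x\leq 6$, so the quantity $d_m=\sup_x d_x\leq 6$ and the constant $C(d_m)$ in Theorem \ref{LYT} is an \emph{absolute} constant, which I will henceforth absorb into $C(D)$. The estimate \eqref{LY} may then be applied with the balls centered at the point $x$ of interest, since $x\in B_r(x)$ trivially and $u$, being harmonic on all of $G$, is in particular harmonic on $B_{6r}(x)$ (I take $r\geq 1$, as required by Theorem \ref{LYT}).

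The next step is the standard oscillation normalization. Since $|\nabla f|^2(x)=\sum_{y\sim x}(f(y)-f(x))^2$ is invariant under adding a constant, and $u-c$ is harmonic whenever $u$ is, I would fix $x\in G$ and apply \eqref{LY} to the shifted function $v:=u-\min_{B_{6r}(x)}u$, which again is harmonic. On $B_{6r}(x)$ one has $0\leq v(y)\leq \mathrm{osc}_{B_{6r}(x)}u$, so that
$$|\nabla u|^2(x)=|\nabla v|^2(x)\leq \frac{C}{r^2}\sum_{y\in B_{6r}(x)}v^2(y)\,d_y\leq \frac{C}{r^2}\bigl(\mathrm{osc}_{B_{6r}(x)}u\bigr)^2\,|B_{6r}(x)|.$$

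The crucial input is then the volume bound on $B_{6r}(x)$. For a generic nonnegatively curved semiplanar graph the quadratic estimate \eqref{VGG1} only gives $|B_{6r}(x)|\leq C(D)r^2$, which reproduces the interior gradient bound and yields no decay in $r$. Under the hypothesis $D_G\geq 43$, however, Theorem \ref{LFC} identifies $S(G)$ with the one-sided cylinder assembled from the distinguished face $\sigma$ and the layers $L_1,L_2,\dots$; since each layer contains a number of faces comparable to $\deg(\sigma)=D$, the number of vertices within combinatorial distance $6r$ of $x$ grows at most linearly, i.e. $|B_{6r}(x)|\leq C(D)r$ for $r\geq 1$. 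Substituting this into the previous display gives
$$|\nabla u|^2(x)\leq \frac{C(D)}{r}\bigl(\mathrm{osc}_{B_{6r}(x)}u\bigr)^2,$$
and taking square roots yields \eqref{WGE}.

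I expect the only genuine obstacle to be the careful justification of the uniform linear growth $|B_{6r}(x)|\leq C(D)r$ from the cylinder decomposition of Theorem \ref{LFC}: one must verify that the number of layers $L_m$ met up to combinatorial distance $R$ from \emph{any} base point $x$ is $O(R)$ and that each such layer carries $O(D)$ vertices, uniformly in $x$ rather than only relative to the distinguished face $\sigma$. Once this uniform linear volume growth is established, every remaining step is a formal substitution, and the constraint $r\geq 1$ inherited from Theorem \ref{LYT} is exactly what is needed to dominate the neighbors of $x$ by $B_{6r}(x)$.
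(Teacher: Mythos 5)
Your proposal is correct and follows essentially the same route as the paper: apply the pointwise estimate \eqref{LY} to the shifted harmonic function $u-\min_{B_{6r}(x)}u$, bound the resulting sum by $|B_{6r}(x)|\cdot(\mathrm{osc}_{B_{6r}(x)}u)^2$, and invoke the linear volume growth $|B_{6r}(x)|\leq C(D)r$ that the structure theorem (Theorem \ref{LFC}) forces when $D_G\geq 43$. The uniform-in-$x$ linear volume bound you flag as the remaining obstacle is exactly the point the paper also treats briefly, asserting it follows by the same argument as for \eqref{VGG1} using the layered cylinder structure $\sigma\cup\bigcup_m L_m$.
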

\begin{proof} By Theorem \ref{LFC}, the regular polygonal surface $S(G)$ has linear volume growth. As same as in the proof of (\ref{VGG1}) in Theorem \ref{GRV}, we obtain that  for any $x\in G$ and $r\geq 1,$
\begin{equation}\label{VLG}|B_r(x)|\leq C(D)r.\end{equation} By (\ref{LY}) in Theorem \ref{LYT} and $d_m\leq6$, we have
\begin{equation}\label{PCGE1}|\nabla u|^2(x)\leq \frac{C}{r^2}\sum_{y\in B_{6r}(x)}u^2(y)d_y\leq \frac{C}{r^2}|B_{6r}(x)|\max_{B_{6r}(x)}|u|^2.\end{equation}
We replace $u$ by $u-\min_{B_{6r}(x)}u$ in (\ref{PCGE1}), noting
that (\ref{VLG}), to obtain that
$$|\nabla u|(x)\leq \frac{C(D)}{\sqrt r}\mathrm{osc}_{B_{6r}(x)}u.$$
\end{proof}

\begin{remark} \emph{We call} \emph{(\ref{LY})} \emph{the weak version
    of the gradient estimate since its scaling is not as usual, but it suffices for our application.}
\end{remark}

\begin{proof}[Proof of Theorem \ref{PGTF}] Let $G$ be a semiplanar
  graph with $\Sec(G)\geq 0$ and $D_G\geq 43.$ Let $\sigma$ be the
  largest face with $\deg(\sigma)=D_G=D\geq 43.$ By Theorem \ref{LFC},
  either $G$ has no hexagons, which looks like $\sigma, L_1,L_2,\cdots,L_m,\cdots$ where each
  $L_m$ has the same type of faces (triangle or square), i.e. $G=\sigma\cup\bigcup_{m=1}^{\infty}L_m,$ or $G$ has hexagons, $G=P^{-1}(\sigma\cup\bigcup_{m=1}^{\infty}L_m),$ where $P^{-1}$ is the graph operation defined in section 2.
Denote by $A=\sigma\cap G$ the set of vertices incident to $\sigma,$
by $d^G(x,A)=\min_{y\in A}d^G(x,y)$ the distance function of $A$ in
$G.$ Let $B_r(A)=\{x\in G:d^G(x,A)\leq r\}$ and $\partial
B_r(A)=\{x\in G:d^G(x,A)=r\}.$

We claim that by the construction of $G,$ for any $x,y\in \partial
B_r(A),$ there is a path joining $x$ and $y$ in $B_r(A)$ with length
less than or equal to $3D.$ For the case that $G$ has no hexagons,
we know that $\sharp
\partial B_r(A)=D$ and $\partial B_r(A)$ is a closed path (a cycle).
This is trivial. For the case that $G$ has hexagons, by induction on
$r\in\mathrm{N},$ we have $\sharp
\partial B_r(A)\leq D$ and the distance to $A,$ i.e. $d(x,A)$ is invariant under the operation $P^{-1}.$
Since for each two vertices in $\partial B_r(A)$ sharing one
hexagon, there is a path on the boundary of the hexagon with length
$\leq 3$ which lies in $B_r(A).$ Note that $\sharp
\partial B_r(A)\leq D,$ the claim follows.

In addition, for any $q\in A,$ we have
\begin{equation}\label{FTP1}\partial B_r(A)\subset
B_{r+D}(q).\end{equation}

Let $u\in H^d(G)$ and $M(r)=\mathrm{osc}_{\partial
B_r(A)}u=\max_{\partial B_r(A)}u-\min_{\partial B_r(A)}u.$ By the
maximum principle which is a direct consequence of the definition of
the harmonic function, we have $\max_{\partial
B_r(A)}u=\max_{B_r(A)}u$ and $\min_{\partial
B_r(A)}u=\min_{B_r(A)}u$, so that $M(r)$ is nondecreasing in $r$. To
prove the theorem, it suffices to show that $M(r)=0$ for any large
$r.$ Let $y_r,x_r\in\partial B_r(A)$ satisfy $u(y_r)=\max_{\partial
B_r(A)}u$ and $u(x_r)=\min_{\partial B_r(A)}u.$ Then there exists a
path in $B_r(A)$ such that
$$y_r=z_0\sim z_1\sim \cdots\sim z_l=x_r,$$ where $z_i\in B_r(A)$ for $0\leq i\leq l$ and $l\leq 3D.$ Hence
\begin{eqnarray}\label{FTP2}
M(r)&=&u(y_r)-u(x_r)\leq \sum_{i=0}^{l-1}|\nabla u|(z_i)\nonumber\\
&\leq& C\sum_{i=0}^{l-1}\frac{C(D)}{\sqrt r}\mathrm{osc}_{B_{6r}(z_i)}u\nonumber\\
&\leq& C(D)\frac{\mathrm{osc}_{B_{7r+D}(q)}u}{\sqrt r}\cdot (3D)\nonumber\\
&\leq& C(D)\frac{\mathrm{osc}_{B_{7r+2D}(A)}u}{\sqrt r}\nonumber\\
&\leq& C(D)\frac{M(9r)}{\sqrt r}\ \ \ for\ \ r \geq D,
\end{eqnarray}
where we use (\ref{WGE}) in Corollary \ref{WGEC},
$B_{6r}(z_i)\subset B_{7r+D}(q)\subset B_{7r+2D}(A).$

Let $r\geq R_0(D,\delta):=\left(\frac{C(D)}{\delta}\right)^2,$ for
$\delta<1$ which will be chosen later. Then we have
$\frac{C(D)}{\sqrt r}\leq\delta<1.$ By (\ref{FTP2}), for any $r\geq
R_0(D,\delta),$ we obtain that for $k\geq1,$
$$M(r)\leq \delta M(9r)\leq \delta^kM(9^kr).$$ Since $u\in H^d(G),$ by \eqref{FTP1}, $$M(r)\leq2\max_{B_{r+D}(q)}|u|\leq 2C\big(r+D+1\big)^d.$$
Hence for large $k$ with $9^kr\geq D+1,$
$$M(r)\leq \delta^kM(9^kr)\leq 2C\delta^k(9^kr+D+1)^d\leq C2^{d+1}\delta^k(9^kr)^d=C(d)\left(\frac{1}{2}\right)^kr^d,$$ if we choose $\delta=\frac{1}{2\cdot9^d}.$
Then for any $r\geq R_0(D,\delta)=(C(D)2\cdot9^d)^2,$ we have
$$M(r)\leq C(d)\left(\frac{1}{2}\right)^kr^d.$$ By $k\rightarrow \infty,$ we obtain $M(r)=0,$ which proves the theorem.
\end{proof}

Combining Theorem \ref{PNG1T} and Theorem \ref{PGTF} with Lemma
\ref{GFL1}, we obtain a dimension estimate that does not depend on
the maximal facial degree $D_G$.
\begin{theorem} Let G be a semiplanar graph with $\Sec(G)\geq 0$. Then for any $d\geq 1$,
$$\dim H^d(G)\leq Cd^2,$$ where C is an absolute constant.
\end{theorem}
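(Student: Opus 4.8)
The plan is to eliminate the dependence on the maximal facial degree $D$ in Theorem \ref{PNG1T} by exploiting the structural dichotomy of Lemma \ref{GFL1} together with the much sharper estimate of Theorem \ref{PGTF}. The key observation is that the facial-degree-dependent constant $C(D)$ appearing in \eqref{PNG1} becomes an absolute constant once $D$ is confined to a finite range, and that the complementary large-degree regime is governed by an entirely different and stronger conclusion.

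First I would invoke Lemma \ref{GFL1}, which partitions the class of infinite semiplanar graphs with $\Sec(G)\geq 0$ according to whether $D_G\leq 42$ or $D_G\geq 43$. In the first case $D_G$ takes values in the finite set $\{3,4,\dots,42\}$, so Theorem \ref{PNG1T} gives $\dim H^d(G)\leq C(D_G)d^2$ with $C(D_G)\leq\max_{3\leq k\leq 42}C(k)=:C_1$, a finite absolute constant; hence $\dim H^d(G)\leq C_1 d^2$.

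In the second case $D_G\geq 43$, Theorem \ref{PGTF} applies and yields the stronger conclusion $\dim H^d(G)=1$. Since $d\geq 1$ forces $d^2\geq 1$, this gives $\dim H^d(G)=1\leq d^2$. Setting $C:=\max\{C_1,1\}$ then covers both cases simultaneously, and the theorem follows.

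I do not expect any genuine obstacle here, since the substantive analytic work has already been carried out in Theorem \ref{PNG1T} and, more decisively, in Theorem \ref{PGTF}. The only point requiring care is the passage from $C(D)$ to an absolute constant in the small-degree case, which rests entirely on the finiteness of the index set $\{3,\dots,42\}$ furnished by Lemma \ref{GFL1}; without that finiteness one could not bound $\max_k C(k)$, and this is exactly the role played by the classification of large faces.
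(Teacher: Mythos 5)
Your proposal is correct and follows essentially the same route as the paper, which likewise obtains the result by combining Theorem \ref{PNG1T} and Theorem \ref{PGTF} with the dichotomy of Lemma \ref{GFL1} ($D_G\leq 42$ versus $D_G\geq 43$), taking the maximum of the finitely many constants $C(k)$ in the first case and using $\dim H^d(G)=1$ in the second.
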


{\bf Acknowledgements.} The authors of the paper thank the
anonymous referee for his/her excellent work with numerous careful and useful suggestions and comments.

\end{document}